\documentclass[12pt]{article}

\usepackage{mathtools,amssymb,amsmath,amsfonts,mathrsfs,textcomp,amsthm,latexsym}
\usepackage{tikz,textcomp}
\usepackage{enumitem}
\usepackage{geometry}
\usepackage{cite}
\usepackage{booktabs,enumitem,microtype}
\usepackage[hidelinks]{hyperref}
\usepackage{bm}
\usepackage{comment}
\usepackage{listings}
\usepackage[hidelinks]{hyperref}
\newcommand{\Ex}{\mathrm{ex}} 
\newcommand{\N}{\mathcal{N}} 
\newcommand{\K}{\mathcal K}
\newcommand{\Aut}{\operatorname{Aut}}
\newcommand{\EE}{\mathbb E}
\newtheoremstyle{compact}{4pt}{4pt}{\itshape}{}{\bfseries}{.}{0.5em}{}
\theoremstyle{compact}
\newtheorem{lem}{Lemma}[section]
\newtheorem{thm}{Theorem}[section]
\newtheorem{cor}[thm]{Corollary}
\newtheorem{prop}[thm]{Proposition}

\newtheorem{clm}{Claim}
\newtheorem{remark}{Remark}

\allowdisplaybreaks  
\title{Supersaturation thresholds for expanded color-critical graphs \thanks{ E-mail: 1519036064@qq.com (L. Zhang), qliu@fzu.edu.cn (Q. Liu),  yhong@fzu.edu.cn (Y. Hong),
		 hujinze@fzu.edu.cn (J. Hu),
		 wangxi123@nenu.edu.cn (X. Wang).
		 The authors would like to thank  Wang Xi for validating the R program for us.
}}
\author{Liping Zhang$^{a}$, Qinghai Liu$^{a,c}$\footnote{Corresponding author.},  Yanmei Hong$^{b,c}$, 	Xi Wang$^{d}$, Jinze Hu$^{a}$ 
\\[1mm]
{\small \it
$^a$Center for Discrete Mathematics, Fuzhou University, Fuzhou, Fujian, 350108 China}\\
{\small \it
$^b$School of Mathematics and Statistics, Fuzhou University, Fuzhou, Fujian, 350108 China}\\
{\small \it
$^c$Key Laboratory for Operations Research and Cybernetics of Fujian Universities}\\
{\small \it
$^d$School of Mathematics and Statistics, Northeast Normal   Universities,}\\
{\small \it
 Changchun, Jilin, 130024 China}
}
\date{}
\begin{document} \maketitle
\begin{abstract}
For a $k$-uniform hypergraph $L$, let $h_L(n,q)$ be the minimum number of copies of $L$ in an $n$-vertex $k$-graph with $\mathrm{ex}(n,L)+q$ edges.
Let $t_L(n,q)$ be the minimum number of copies obtained by adding $q$ edges to an extremal $L$-free $k$-graph.
A graph is color-critical if deleting a suitable edge decreases its chromatic number. Let $\mathcal K_k(F)$ be the expanded graph obtained by adding $k-2$ new vertices to each edge of $F$, with disjoint sets of new vertices for distinct edges.

Mubayi proved that,   for  every clique $K_r$ with $r\ge4$ and  fixed  $ q>0$,
$
 h_{\mathcal K_3(K_r)}(n,q)\ge qt_{\mathcal K_3(K_r)}(n,1)$ for every  sufficiently large $n$.
Pikhurko and Yilma later proved that,  for  every color-critical graph $F$, there is a constant $\epsilon_F>0$ such that
$h_{F}(n,q)=t_{F}(n,q)$ for every  sufficiently large $n$ and  $1\le q\le\epsilon_F n$.
We extend this equality to expanded color-critical graphs.
Let $d>k\ge3$. For  every color-critical graph $F$ with $\chi(F)=d+1$,
  there is a constant $\epsilon>0$ such that
$h_{\mathcal K_k(F)}(n,q)=t_{\mathcal K_k(F)}(n,q)$ for every  sufficiently large $n$ and 
$1\le q\le\epsilon n^{k-1}$.
Let $q_L(n)$ be the largest integer $q$ such that
$h_L(n,s)=t_L(n,s)$ for every integer $1\le s\le q$.
We give a sufficient condition for
$q_L(n)=\Theta_L(n^{k-1})$ and an explicit family satisfying it.
We also give a different sufficient condition under which
$q_L(n)=\Omega_L(n^{k-1/2})$ and an explicit family satisfying it. 

\noindent
	\emph{Keywords.}
	Supersaturation; extremal graph; color-critical graph; chromatic number; expanded graph; hypergraph.
\end{abstract}
\section{Introduction}
Let $F$ be a fixed $k$-graph. The Tur{\'a}n \cite{turan1941} function $\mathrm{ex}(n,F)$, introduced by Tur{\'a}n, is one of the central objects in extremal theory. It is defined as the maximum number of edges in an $n$-vertex $F$-free $k$-graph. Determining
	$\mathrm{ex}(n,F)$, together with the structure of the corresponding
	extremal graphs, is a central problem in extremal
	combinatorics; see, for example,
\cite{GerbnerPalmer,GerbnerNagyVizer,MubayiVerstraete,Keevash}
	and the references therein.
	A natural refinement of the Tur{\'a}n problem is the supersaturation problem. Instead of forbidding a copy of $F$, one asks how many copies of $F$  appear once the number of edges exceeds the Tur{\'a}n threshold. More precisely, for a fixed $k$-graph $F$, let $h_F(n,q)$ denote the minimum number of copies of $F$ among all $n$-vertex $k$-graphs with $\mathrm{ex}(n,F)+q$ edges. 
   These copies are unlabelled and need not be induced. Determining the behavior of $h_F(n,q)$ is a central problem in extremal theory.
	
	The study of supersaturation can be traced back to a classical theorem of Rademacher~\cite{Rademacher1941}, who proved that every graph with $\mathrm{ex}(n,K_3)+1$ edges contains at least $\lfloor n/2\rfloor$ triangles (that is $h_{K_3}(n,1)\ge \lfloor n/2\rfloor$). 
	Shortly afterwards, Erd\H{o}s~\cite{Erdos1955}
	proved that
	$h_{K_3}(n,q)=q\lfloor n/2\rfloor$
	for $q\le 3$. 
	In fact, he found that one possible construction for graphs with $h_{K_3}(n,q)$ copies of $K_3$ is obtained by adding $q$
	edges to  the Tur\'an graph for $K_3$. Denote by $t_F(n,q)$ the smallest number of $F$-copies that can be achieved this way, and let $c(n,F):=t_F(n,1)$.
	By definition, $h_F(n,q)\le t_F(n,q)$ and $q\cdot c(n,F)\le t_F(n,q)$.
	More generally, he established analogous results for complete graphs, proving that for every fixed $r\ge3$, there exists a constant $c_d>0$ such that
	$h_{K_d}(n,q)=t_{K_d}(n,q)$
	whenever $q\le c_r n$. Later, Lov\'asz and Simonovits~\cite{LovaszSimonovits1983} determined the optimal value of $c_d$ for sufficiently large $n$, thereby resolving a longstanding conjecture of Erd\H{o}s. Their work essentially settled the supersaturation problem for cliques in the case $q = o(n^2)$.
	The supersaturation problem for cliques beyond the linear range proved substantially more difficult. After a sequence of partial results, asymptotic solutions were obtained by Razborov~\cite{Razborov2008} for $K_3$, Nikiforov~\cite{Nikiforov2011} for $K_4$, and Reiher~\cite{Reiher2016} for general $K_d$.
	
	Motivated by these developments, Mubayi~\cite{Mubayi2010} developed a systematic approach for supersaturation of color-critical graphs. Recall that a graph $F$ is called \emph{color-critical} if there exists an edge $e\in E(F)$ such that
	$\chi(F-e)<\chi(F)$.
	He proved that for every color-critical graph $F$, there exists a constant $c_F>0$ such that $h_F(n,q)\ge q\cdot c(n,F)$
	for all $1\le q\le c_F n.$
	This result unified and substantially generalized many earlier supersaturation results
	Following the approach developed in \cite{Mubayi2010}, Mubayi~\cite{Mubayi2013} extended the supersaturation theory to hypergraphs by combining Stability Theorem with the Hypergraph Removal Lemma. 
	Given a graph $F$, let $\mathcal K_k(F)$ denote the $k$-graph obtained by enlarging each edge of $F$ with $k-2$ new vertices. 
	In particular, he proved:
	\begin{thm}[Mubayi]
Fix  $r\ge4, q>0$.  For sufficiently large $n$,
$
 h_{\mathcal K_3(K_r)}(n,q)\ge qc(n,\mathcal K_3(K_r))$.  
	\end{thm}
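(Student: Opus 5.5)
The strategy is Mubayi's stability-plus-removal scheme for $L=\mathcal K_3(K_r)$. Recall the known facts. For $r\ge4$ and $n$ large, $\Ex(n,L)=t_{r-1}(n)$, and the unique extremal $3$-graph is $T^3_{r-1}(n)$: the vertex set is split into $r-1$ nearly equal parts and we take every triple meeting at least two parts. Stability also holds: an $L$-free $3$-graph with $(1-o(1))t_{r-1}(n)$ edges can be made into $T^3_{r-1}(n)$ by changing $o(n^3)$ edges. The number $c(n,L)$ is of order $n^{3(r-2)\cdot}$ times a constant determined by the cheapest added edge. That edge is a triple inside one part, and each such triple creates $\Theta(n^{v(L)-3})$ copies. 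Let $H$ have $t_{r-1}(n)+q$ edges and suppose $H$ has fewer than $q\,c(n,L)$ copies of $L$. We aim for a contradiction, or else we show $H$ is a Tur\'an graph plus $q$ edges, in which case the bound holds directly.

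\textbf{Step 1 (removal and stability).} Since $q$ is fixed, $H$ has $O(n^{v(L)-3})=o(n^{v(L)})$ copies of $L$. The Hypergraph Removal Lemma therefore gives an $L$-free subgraph obtained by deleting $o(n^3)$ edges. Stability then yields a partition $V_1,\dots,V_{r-1}$ that maximizes the number of crossing edges and has $o(n^3)$ bad edges: triples inside a part, and missing crossing triples. Next we clean up vertices. Any vertex of small link degree can be deleted, and induction on $n$ applies, in the style of Mubayi's argument. One checks that deleting a vertex of degree below $\delta(T^3_{r-1}(n))$ raises the excess $q$ and keeps the count inequality. So we may assume every vertex has link degree close to $\delta(T^3_{r-1}(n))$. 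Maximality of the partition then makes each vertex have only $o(n^2)$ link edges inside its own part, in the graph sense.

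\textbf{Step 2 (counting through bad edges).} Let $B$ be the set of triples lying inside some part, and $M$ the set of missing crossing triples. Then $|B|\ge |M|+q$. We show that each $e\in B$ lies in at least $(1-o(1))c(n,L)$ copies of $L$ that use no other edge of $B$. This is because the minimum-degree and partition properties let us embed $K_r$ with its expansion, using $e$ as the single non-partite edge, around $e$ in almost all possible ways. The only exceptions are copies that meet a missing triple. So the key bound is
\[
\#L(H)\ \ge\ \sum_{e\in B}\Bigl(c(n,L)-\sum_{m\in M}N(e,m)\Bigr),
\]
where $N(e,m)$ counts the copies through $e$ that would need $m$. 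Each missing triple destroys only $O(n^{v(L)-6})$ copies through a fixed $e$, or $O(n^{v(L)-5})$ if it shares a vertex with $e$. On the other hand, a missing triple forces a compensating edge in $B$, and each such edge contributes $\Theta(n^{v(L)-3})$ copies. Adding these up gives $\#L(H)\ge (|B|-|M|)c(n,L)\ge q\,c(n,L)$ once $n$ is large. Exact equality is not needed here.

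\textbf{Main obstacle.} The delicate point is the interaction term. When $|M|$ is large, say comparable to $|B|$, with many missing triples concentrated at a few vertices, the naive subtraction may overcount the destroyed copies. I would handle this by splitting the vertices into those incident to at least $\delta n^2$ missing triples and the rest. The minimum-degree cleaning in Step 1 forces such heavy vertices to have $\Omega(n^2)$ link edges inside their own part, which contradicts the maximality of the partition. So $M$ is spread out, each $m\in M$ kills only a $o(1)$ fraction of the copies through any $e$, and the inequality closes.
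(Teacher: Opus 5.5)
Your overall scheme (removal lemma, stability, a partition maximizing crossing edges, and paying for destroyed copies with the surplus $|B|-q=t+|M|$) is the right one. The paper only cites Mubayi's theorem, but its own machinery for the general case (Remark~\ref{remark-mb}, Lemma~\ref{lem-b}, proof of Theorem~\ref{thm-ht}) follows the same plan.

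\textbf{The main gap.} The proposal breaks down at the point you flag as the main obstacle. Neither maximality of the partition nor a minimum-degree condition bounds the number of bad edges in which a vertex $x$ lies in the doubled pair.

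Consider a vertex $x$ in a smallest part whose link is all of $\binom{V\setminus\{x\}}{2}$. It satisfies every degree condition, and no move of it increases the number of crossing edges. Yet it lies in $\Theta(n^2)$ such edges, and it has $\Theta(n^2)$ link edges inside its own part, contrary to your Step~1. In addition, the bad edges that the degree condition forces at a vertex with many missing triples may have their doubled pair in another part, where moving $x$ changes nothing.

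What is missing is a counting claim. Write $L=\mathcal K_3(K_r)$ and $l=|V(L)|$.
\begin{itemize}
\item Suppose that for every part $V_i$, its own included, linearly many $v\in V_i$ have codegree $\Omega(n)$ with $x$. Taking $x$ as an original vertex together with one original vertex in each part gives $\Omega(n^{l-1})$ potential copies through $x$.
\item Since $\mathcal N_L(H)=O(n^{l-3})$, almost all of these must contain a missing edge avoiding $x$. Each missing edge lies in at most $n^{l-4}$ of them, which forces $|M|=\Omega(n^3)$ and contradicts stability.
\item Hence every $x$ has a part $V_m$ that its link barely meets. Moving $x$ into $V_m$ is what contradicts maximality. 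This bounds the bad edges with $x$ in the doubled pair by $\epsilon n^2$.
\end{itemize}
You then still need the double count of Lemma~\ref{lem-b}. A bad edge lying in fewer than $(1-\delta)c(n,L)$ copies that use it as their only bad edge has a doubled-pair vertex in $\Omega(n^2)$ missing edges (Claim~\ref{clm-dmx}). Counting pairs (such an edge, a missing edge through its doubled pair) against the per-vertex bound shows there are at most $\epsilon|M|$ such deficient bad edges.

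\textbf{Errors in the set-up and the final accounting.}
\begin{itemize}
\item The extremal graph $T^{(3)}(n,r-1)$ consists of the triples meeting three distinct parts. The graph of all triples meeting at least two parts contains $L$. So the cheapest added edge has two vertices in one part and one in another, while a triple inside a part lies in about $3c(n,L)$ copies. Consequently $B$ must be the set of all non-crossing triples; with your $B$ the relation $|B|\ge|M|+q$ fails.
\item Your bound $\mathcal N_L(H)\ge(|B|-|M|)c(n,L)$ ignores imbalance. For an unbalanced partition, single-edge counts may fall below $c(n,L)$, and the compensation comes from $t=t^{(3)}(n,r-1)-e(K)$, not from $M$.
\item For fixed $q$ the clean finish is $\mathcal N_L(H)\ge(|B|-\epsilon|M|)(1-\delta)c(n,L)$ with $|B|=q+t+|M|$. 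If $t+|M|\ge1$, this is at least $qc(n,L)$ once $\epsilon\le 1/2$ and $\delta\le 1/(2(q+1))$. If $t+|M|=0$, then $H\supseteq T^{(3)}(n,r-1)$ and the bound holds by the definition of $c(n,L)$.
\item The minimum-degree induction is unnecessary and, as stated, does not close. Each deletion increases $q$, while your threshold $n_0$ depends on $q$.
\end{itemize}
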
 
	More generally, the paper established supersaturation results for several important  $3$-graphs, including the Fano plane, cancellative $3$-graphs (that is,  if $A\cup B = A \cup C$ we have
	$B = C$), $3$-graphs contains no copy
	of $B_5 = {123,124,125,345}$, and some $4$-graphs, including the  books, $C_3 = {1234,3456,1256}$.  This work provided one of the first systematic treatments of supersaturation phenomena in  hypergraphs. 
	
	Our primary focus is on expanded  color-critical graphs, the exact supersaturation problem for expanded  color-critical graphs  remained widely open, it was not known whether the natural construction obtained by adding edges to an extremal $L$-free hypergraph is optimal. A major breakthrough in this direction was obtained by Pikhurko and Yilma~\cite{PikhurkoYilma2017}. They proved:
	\begin{thm}[Pikhurko, Yilma]
		For every color-critical graph $F$, there exists a constant $\varepsilon_F>0$ such that
		$
		h_F(n,q)=t_F(n,q)
		$
		for all
		$
		1\le q\le \varepsilon_F n
		$
		and sufficiently large $n$.     
	\end{thm}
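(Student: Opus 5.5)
The plan follows the stability-plus-local-analysis scheme of Mubayi, now targeting the exact value rather than the lower bound $q\cdot c(n,F)$. Let $\chi(F)=r+1$ and fix a critical edge. Let $G$ be an $n$-vertex graph with $\mathrm{ex}(n,F)+q$ edges, $1\le q\le \varepsilon_F n$, minimizing the number of copies of $F$. By Simonovits' theorem, $\mathrm{ex}(n,F)=t_r(n)$ for large $n$, so $G$ has $t_r(n)+q$ edges. The goal is to show that $G$ is obtained from some $r$-partite Turán-like graph by adding $q$ edges, and then to compare it with $t_F(n,q)$.

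\textbf{Step 1: approximate structure.} We have $t_F(n,q)\le q\,O(n^{v(F)-2})=O(\varepsilon_F n^{v(F)-1})$. So $G$ has $o(n^{v(F)})$ copies of $F$, and the removal lemma together with Erdős–Simonovits stability gives a max-cut partition $V_1\cup\dots\cup V_r$ in which $G$ has $o(n^2)$ edges inside parts. Next we clean up this partition. Any vertex of small degree can be deleted and replaced by a cloned vertex of large degree; a standard symmetrization/degree argument (using minimality of $G$) shows this does not increase the copy count. After this, every vertex has degree at least $(1-1/r-\delta)n$, and every vertex has at most $\delta n$ neighbours in its own part.

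\textbf{Step 2: counting the copies created by bad edges.} Call the edges inside parts \emph{bad} and the missing cross edges \emph{missing}. Because $F$ is color-critical, each bad edge $uv$ extends to at least $(1-O(\delta))\,c(n,F)$ copies of $F$ that use the other parts. Here $c(n,F)\asymp n^{v(F)-2}$, and the extensions use only cross edges. The edge count forces $\#\text{bad}-\#\text{missing}\ge q+(t_r(n)-\sum_{i<j}|V_i||V_j|)$. Each missing edge destroys at most $O(\delta n^{v(F)-2})$ copies per incident bad edge. Bad edges sharing vertices create extra copies, and these are counted with a correction term.

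\textbf{Step 3: the main step, showing there are no missing edges.} This is the hardest part. I would first try the following: if $m$ edges are missing and $b=q+m'$ edges are bad, then each missing cross edge $xy$ kills only those copies that contain both $xy$ and a bad edge. The min-degree condition together with $q\le\varepsilon_F n$ bounds this by $O(b\,n^{v(F)-3})$. Meanwhile each extra bad edge costs about $c(n,F)=\Theta(n^{v(F)-2})$. So trading a bad edge for a missing edge is unprofitable once $\varepsilon_F$ is small. Then, with no missing edges, the part sizes must be balanced, because an unbalanced partition needs extra bad edges. Hence $G$ is the Turán graph plus $q$ edges, and the count is at least $t_F(n,q)$. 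The delicate point is the bound on copies killed by missing edges when bad edges cluster at a single vertex, for instance up to $\varepsilon_F n$ bad edges at one vertex. This is exactly where the linear bound $q\le\varepsilon_F n$ enters. I expect to handle it with a weighted local count around high bad-degree vertices: compare the copies through a vertex $v$ against moving $v$ to another part, using maximality of the cut.
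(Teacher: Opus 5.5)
The paper does not prove this theorem; it quotes it from Pikhurko and Yilma. The relevant comparison is therefore the paper's proof of its analogue for expansions, Theorem~\ref{thm-ht}. Specialized to $k=2$, that proof runs: stability with a $\Phi$-maximal partition (Remark~\ref{remark-mb}), then the bound $|B|=O(q)$ (Lemma~\ref{lem-b}), then an exchange-and-rebalance step (Lemma~\ref{lem-integer-balancing}). Your skeleton matches this.

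\textbf{The gap.} The step everything rests on, the minimum-degree cleanup in Step~1, is not justified, and the cloning argument is circular.
Replacing a vertex $x$ with $\deg(x)\le(1-1/r-\delta)n$ by a clone of a typical vertex $y$ raises the edge count by at least $\delta n/2$. It also adds all copies through the clone, typically of order $\varepsilon n^{v(F)-2}$, even if $x$ lies in no copy. To restore the edge count you must delete about $\delta n/2$ edges. Nothing guarantees these deletions destroy that many copies unless you already know that bad edges carry about $c(n,F)$ copies each. That is exactly what Step~2 derives from the degree condition.

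There are two ways to repair this.
\begin{itemize}
\item \emph{Deletion plus linear supersaturation.} Here $e(G-x)\ge t_r(n-1)+q+\delta n-1$. Mubayi's bound $h_F(n,q)\ge q\,c(n,F)$ for $q\le c_Fn$ (cited in the introduction) then gives $\#F(G)\ge(q+\delta n-1)\,c(n-1,F)>t_F(n,q)$ once $\varepsilon+\delta<c_F$. This contradicts minimality, so no low-degree vertex exists.
\item \emph{The paper's route, which needs no degree condition.} A bad edge in fewer than $(1-2\delta_5)c(n,F)$ copies forces an endpoint with $\Omega(n)$ missing edges. Partition-maximality together with $\#F(G)=O(\varepsilon n^{v(F)-1})$ caps every vertex's own-part degree at $O(\delta_7 n)$. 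Double counting then shows these deficient bad edges are a small fraction of $B$.
\end{itemize}

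\textbf{What Step~2 gives, and why Step~3 is easier than you think.} State explicitly that each bad edge is the \emph{unique} bad edge of at least $(1-O(\delta))c(n,F)$ copies in $G$. These families are disjoint, so $|B|\le(1+O(\delta))\,t_F(n,q)/c(n,F)=O(q)$.

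With this bound, the ``delicate point'' you flag does not arise.
\begin{itemize}
\item Every potential copy destroyed by a missing edge $f$ also contains a bad edge $e$ with $|e\cup f|\ge 3$. So at most $O(|B||M|n^{v(F)-3})=O(\varepsilon|M|n^{v(F)-2})$ copies are lost, however the bad edges cluster. This is the paper's estimate $C_1|B||M|n^{l-2k+1}\le C_1\delta|M|n^{l-k}$.
\item Deleting $t'+|M|$ bad edges, where $t'=t_r(n)-e(K)$, saves $(1-o(1))(t'+|M|)c(n,F)$.
\end{itemize}

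The claim that an unbalanced partition ``needs extra bad edges'' also needs a transfer estimate, not just the edge count. Rebalancing takes $s=O(1+\sqrt{t'})$ moves. Make them using vertices meeting no bad edge; all but $O(\varepsilon n)$ vertices qualify. The moves change the remaining count by at most a factor $1+O(s/n)$, which costs $O(sqn^{v(F)-3})=O(\varepsilon t' n^{v(F)-2})$. This is dominated by the saving, so no weighted local count around high bad-degree vertices is needed.
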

To describe more precisely the range
for which the extremal construction remains optimal, they defined
\[
q_F(n):=
\max\left\{
q\in\mathbb N:
h_F(n,q')=t_F(n,q')
\text{ for all }q'\le q
\right\},
\]
and
$
c_1(F):=
\liminf_{n\to\infty}\frac{q_F(n)}{n}$.
Thus, $q_F(n)$ measures the largest initial range of $q$ for which
adding $q$ edges to an extremal graph remains optimal for minimizing
the number of copies of $F$.
Pikhurko and Yilma further determined $c_1(F)$ exactly for several
 color-critical graphs. In particular, 
$
c_1(F)=\frac12
$ for every odd cycle $F$, $
c_1(K_r)=\frac1{r-1}
$.
Moreover, for $r\ge2$ and $F=K_{r+2}-e$, where $K_{r+2}-e$ is
obtained from $K_{r+2}$ by removing one edge, they proved that
$
c_1(F)=\frac{r-1}{r^2}.
$

Our main result extends this threshold phenomenon from
color-critical graphs to expanded hypergraphs.
 Define $$\mathcal{F}(k,d)=\{\mathcal{K}^{(k)}(F),\ F \text{ is a color-critical graph with}\ \chi(F)=d+1\}.$$

We first estimate  $t_L(n,q)$ in a wider
range. 

\begin{thm}\label{thm-tlnq1}
Let  $d > k \ge 3$ be integers, and let $L\in \mathcal{F}(k,d)$ has $l$ vertices.
There is a constant $\delta_0>0$ such that,
for all sufficiently large $n$ and $1\le q\le\delta_0n^k$,
\[
qc(n,L)\le t_L(n,q)\le qc(n,L)+O_L(q^2n^{l-2k}).
\]
Consequently, $t_L(n,q)=(1+o(1))qc(n,L)$ whenever $q=o(n^k)$.
\end{thm}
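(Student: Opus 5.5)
The lower bound is formal; the upper bound comes from adding a random set of $q$ cheapest non-edges to the extremal $k$-graph and bounding, by a first-moment count, the copies that use two or more added edges. Throughout I would use the known structural fact for $L=\mathcal K_k(F)\in\mathcal F(k,d)$ with $d>k$ and $n$ large: the unique extremal $L$-free $k$-graph is the complete balanced $d$-partite $k$-graph $T=T_k(n,d)$. All edges of $T$ meet $k$ distinct parts. I take this uniqueness from the literature (Mubayi, Pikhurko) and do not reprove it here. Hence $c(n,L)$ is the minimum, over non-edges $e$ of $T$, of the number of copies of $L$ in $T+e$ that contain $e$.

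\emph{Lower bound.} Let $E'$ be any set of $q$ non-edges added to $T$. For each $e\in E'$, consider the copies of $L$ lying in $T\cup\{e\}$ and containing $e$; there are at least $c(n,L)$ of them. A copy of this kind uses exactly one added edge, namely $e$, so these families are pairwise disjoint for distinct $e$. Summing over $e\in E'$ gives $t_L(n,q)\ge q\,c(n,L)$.

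\emph{Upper bound.} Let $\mathcal C$ be the set of non-edges $e$ of $T$ that attain $c(n,L)$. Any such $e$ has, up to the isomorphism type induced by the part sizes, a fixed intersection pattern with the parts, for instance two vertices in one part and the rest in distinct parts. Since the parts are balanced, $|\mathcal C|\ge \gamma n^k$ for some $\gamma=\gamma_L>0$. I take $\delta_0<\gamma/2$ and choose a uniformly random $q$-subset $E'\subseteq\mathcal C$. The copies of $L$ in $T\cup E'$ split into two kinds:
\begin{itemize}
\item copies using exactly one added edge, of which there are exactly $q\,c(n,L)$;
\item copies using at least two added edges, each of which contains some pair $e\neq f$ in $E'$.
\end{itemize}
If $|e\cap f|=s$, then the number of copies of $L$ containing both $e$ and $f$ is $O_L(n^{l-(2k-s)})$. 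For a fixed pair position the probability that both edges are chosen is $O(q^2/n^{2k})$. The number of ordered pairs of $k$-sets sharing $s$ vertices is $O(n^{2k-s})$. So the expected number of chosen pairs with $|e\cap f|=s$ is $O(q^2 n^{-s})$. The expected number of copies of the second kind is therefore
\[
\sum_{s=0}^{k-1}O(q^2n^{-s})\,O(n^{l-2k+s})=O_L(q^2n^{l-2k}).
\]
Fixing an outcome at most this expectation gives $t_L(n,q)\le q\,c(n,L)+O_L(q^2n^{l-2k})$.

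\emph{Consequence.} It remains to see that $c(n,L)=\Theta(n^{l-k})$. The upper bound is trivial, since $L$ has $l$ vertices and $e$ already fixes $k$ of them. For the lower bound, let $e$ have two vertices $u,v$ in a common part. Colour $F$ with $d$ colours so that the critical edge is the only monochromatic edge, and map $u,v$ to its endpoints. Embed the remaining vertices of $F$ into the prescribed parts, and the expansion vertices of each edge so that every edge other than the critical one spans $k$ distinct parts; this is possible because $d>k$. This gives $\Omega(n^{l-k})$ copies. Then
\[
\frac{q^2n^{l-2k}}{q\,c(n,L)}=O\!\left(\frac{q}{n^{k}}\right)=o(1)
\]
whenever $q=o(n^k)$, which yields $t_L(n,q)=(1+o(1))\,q\,c(n,L)$.

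The main obstacle is the structural input: identifying $T$ as the unique extremal graph and showing that the cheapest non-edge type is abundant, that is $|\mathcal C|=\Omega(n^k)$. The counting itself is routine once these are in place.
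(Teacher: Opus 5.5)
Your proof is correct and follows essentially the same route as the paper. The lower bound comes from disjointness of the single-added-edge copies. The upper bound comes from sampling a uniform $q$-subset of a cheapest non-edge class and applying a first-moment bound on copies that use two added edges, which is the content of the paper's Lemma~\ref{lem-mixed-sampling}.

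The step you flag as the main obstacle, $|\mathcal C|=\Omega(n^k)$, is immediate. Permutations within parts are automorphisms of $T$, so $\mathcal N_L(T+e)$ depends only on the ordered pattern of $e$. Hence $\mathcal C$ contains a whole pattern class of size $\prod_i\binom{|V_i|}{a_i}=\Theta(n^k)$, and that class is exactly the set $\mathcal A$ the paper samples from.
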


\begin{thm}\label{thm-ht}\label{thm1}
Let  $d > k \ge 3$ be integers, and let $L\in \mathcal{F}(k,d)$. 
There is a constant $\delta_L>0$ such that  for all sufficiently large $n$ and 
$1\le q\le\delta_L n^{k-1}$,
$h_L(n,q)=t_L(n,q)$.
Moreover,  if $\delta_L<1/(2d^{k-1})$ then $h_L(n,q)=t_L(n,q)=qc(n,L).$
In particular, $q_L(n)=\Omega_L(n^{k-1})$.
\end{thm}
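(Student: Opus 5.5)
Throughout we use the known structural facts for $L=\mathcal K_k(F)$ with $\chi(F)=d+1$, $d>k$, due to Pikhurko and to Mubayi (expanded cliques) and their extensions to color-critical $F$. For large $n$, $\mathrm{ex}(n,L)=|T_d^k(n)|$, where $T_d^k(n)$ is the balanced complete $d$-partite $k$-graph, that is, all $k$-sets meeting $k$ distinct parts. Moreover the stability version holds: every $L$-free $k$-graph with $\mathrm{ex}(n,L)-o(n^k)$ edges is $o(n^k)$-close to $T_d^k(n)$.

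The plan follows Pikhurko--Yilma combined with Mubayi's hypergraph removal approach. Let $G$ be an $n$-vertex $k$-graph with $\mathrm{ex}(n,L)+q$ edges and $1\le q\le\delta_L n^{k-1}$. Suppose $G$ has at most $t_L(n,q)$ copies of $L$. By Theorem~\ref{thm-tlnq1}, this is at most $2q\,c(n,L)=O(n^{k-1}\cdot n^{l-k-\binom{d+1}{2}(k-2)-\dots})$, which is $o(n^l)$ copies. Indeed, $c(n,L)=\Theta(n^{l-k})$, because one crossing "bad" edge closes $\Theta(n^{l-k})$ copies. Hence $q\,c(n,L)=O(n^{l-1})=o(n^l)$.

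\textbf{Step 1: stability.} By the hypergraph removal lemma, we may delete $o(n^k)$ edges to make $G$ $L$-free. Stability then gives a $d$-partition $V_1,\dots,V_d$ that maximizes the number of crossing edges, with $o(n^k)$ "bad" edges (edges not crossing) and $o(n^k)$ missing crossing edges.

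\textbf{Step 2: cleaning.} Next we show every vertex has small bad degree and large crossing degree in $G$. For this, pick the partition optimally. A vertex $v$ with crossing degree deficient by $\gamma n^{k-1}$ can be deleted and replaced by a copy of a typical vertex; this is a standard symmetrization/induction on $n$ argument comparing with $h_L(n-1,q')$. Alternatively, argue directly: such a vertex lies in $\Omega(n^{l-1})$ copies unless its link is nearly $(d-1)$-partite-compatible. Either way, we obtain a minimum-degree condition $\delta_1(G)\ge(1-\epsilon)\binom{d-1}{k-1}(n/d)^{k-1}$ and parts of size $n/d\pm\epsilon n$.

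\textbf{Step 3: counting.} Let $B$ be the set of bad edges and $M$ the set of missing crossing edges, so that $|B|=|M|+q+(|T_d^k(n)|-|K(V_1,\dots,V_d)|)\ge q+|M|$. Each bad edge $e$ lies in at least $c(n,L)-O(\epsilon)n^{l-k}$ copies of $L$ using only $e$ and crossing edges. This is because $e$ has two vertices in some part, and we can embed $F$ with the critical edge mapped onto that pair. The count is then exactly the structure giving $c(n,L)$, corrected for part-size imbalance and for missing edges. Copies destroyed by missing edges number at most $|M|\cdot O(n^{l-k-1})$ per bad edge. Copies containing two bad edges are overcounted by at most $O(|B|^2 n^{l-2k+1})$. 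The key inequality is
\[
\#L(G)\ \ge\ |B|\,c(n,L)-O\big(|B|\,|M|\,n^{l-k-1}\big)-O\big(|B|^2 n^{l-2k+1}\big)-(\text{imbalance loss}).
\]
Since $q\le\delta_L n^{k-1}$, the loss term $|B|^2 n^{l-2k+1}$ is at most $\delta_L |B|\,n^{l-k}$, which is small compared to $|B|\,c(n,L)$. Missing edges, in turn, each cost a full $\Theta(n^{l-k})$-type gain, since each must be compensated by an additional bad edge. Consequently $\#L(G)\ge t_L(n,q)$, with equality forcing $M=\emptyset$, the partition balanced, and therefore $G=T_d^k(n)+q$ edges.

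\textbf{The main obstacle.} The hardest part is making the equality exact rather than asymptotic. We need to show that the cheapest configuration is genuinely $T_d^k(n)$ plus $q$ edges. This requires comparing the second-order terms: pairs of bad edges sharing $k-1$ vertices in a common crossing structure, against the lost copies from imbalance. For this we would mimic Pikhurko--Yilma's optimization, showing that any deviation (a missing edge or an unbalanced part) costs $\Omega(n^{l-k})$ extra copies. The overlap savings are only $O(q\,n^{l-k-1})\le\delta_L n^{l-2}$ in total, and the threshold $n^{k-1}$ arises exactly from this comparison. Finally, the "moreover" clause follows because, for $q<n^{k-1}/(2d^{k-1})$, the $q$ added edges can be placed inside one part pairwise sharing at most $k-2$ vertices, so that no copy of $L$ uses two of them. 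Given that $d>k$, this gives $t_L(n,q)=q\,c(n,L)$.
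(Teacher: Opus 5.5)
\textbf{There is a genuine gap: nothing in your argument shows $|B|=O(q)$.}

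Stability only gives $|B|=o(n^k)$. Yet Step~3 uses $|B|\le\delta_Ln^{k-1}$ when you bound $|B|^2n^{l-2k+1}$ by $\delta_L|B|n^{l-k}$ \emph{since $q\le\delta_Ln^{k-1}$}. The same bound is what lets the gain $|M|\,c(n,L)$ beat the missing-edge loss $|B||M|n^{l-2k+1}$. It is also what lets $t\,c(n,L)$ beat the first-order imbalance loss $|\zeta_L|A\,n^{l-k-1}$ per bad edge from Lemma~\ref{lem-cs}. Separately, your missing-edge term $|B||M|n^{l-k-1}$ is too weak as written. A bad edge and a missing edge in one copy share at most one vertex, so the correct factor is $n^{l-2k+1}$; with your exponent the comparison fails once $|B|\gg n$.

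Step~2 is where $|B|=O(q)$ would come from. Small missing degree at every vertex would give each bad edge $(1-O(\epsilon))c(n,L)$ copies in which it is the only bad edge, hence $|B|\le(1+O(\epsilon))q$. But Step~2 is only asserted, and neither route you suggest delivers it.
\begin{itemize}
\item Deleting a vertex and comparing with $h_L(n-1,q')$ needs a supersaturation lower bound at $q'=q+\Omega(\gamma n^{k-1})$, which is beyond the range you are proving. It also only detects a deficit in total degree, not missing crossing edges that are compensated by bad edges.
\item The \emph{direct} route (few copies through $x$ forces a nearly $(d-1)$-partite-compatible link) is consistent with $x$ missing every crossing edge that meets some other part $V_j$. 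So it gives no bound on $d_M(x)$.
\end{itemize}

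\textbf{How the paper gets the bound.} It never proves small missing degree; Lemma~\ref{lem-b} obtains $|B|\le C\,\mathcal N_L(H)n^{k-l}$ directly, in three steps.
\begin{itemize}
\item A bad edge lying in fewer than $(1-2\delta_5)c(n,L)$ copies as the unique bad edge has a vertex of its doubled pair with $d_M\ge\delta_8n^{k-1}$.
\item By $\Phi$-maximality together with $\mathcal N_L(H)\le\delta_1n^{l-1}$, each $x\in V_j$ lies in at most $6\delta_7n^{k-1}$ bad edges meeting $V_j$ at least twice.
\item Double counting pairs (deficient bad edge, missing edge) through such a vertex shows that at most half of $B$ is deficient.
\end{itemize}
With that lemma in hand, your direct comparison of $\mathcal N_L(G)$ with $q\,c(n,L)=t_L(n,q)$ does close for small $\delta_L$, using $t=\Omega(A^2n^{k-2})$ whenever the partition is unbalanced. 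It is a reasonable alternative to the paper's rebalance-and-delete competitor $H'$.

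\textbf{The \emph{moreover} clause.} Your justification is wrong. An upper bound on pairwise intersections cannot stop two added edges from lying in one copy, since edges of $L$ already meet in at most one vertex. Also, if \emph{inside one part} means entirely inside, each such edge carries about $\binom{k}{2}c(n,L)$ copies. The paper instead takes all added edges of the simple pattern through a common vertex $y\notin V_i$. In any copy, the monochromatic original edge must land on the pair in $V_i$, which makes $y$ a degree-one expansion vertex, so no copy uses two added edges. There are about $n^{k-1}/(2d^{k-1})$ such edges, which is where the constant comes from.
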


We next give a sufficient condition for an  upper bound of $q_L(n)$.
By Theorem~\ref{thm-c}, there is a constant $\alpha_L>0$ such that
$
C(n,L)=\alpha_Ln^{l-k}+O_L(n^{l-k-1})$
where $l=|V(L)|$.
The parameter $\zeta_L$ measures the first-order change in the
 single-edge count when vertices move between parts.
Its precise definition is given in Subsection~\ref{s2.1}.

\begin{thm}\label{thm-anchor-upper}
Let  $d > k \ge 3$ be integers, and let $L\in \mathcal{F}(k,d)$.  Suppose
$
|\zeta_L|>\frac{4d(d-k+1)}{k-2}\alpha_L$, then there are
constants $0< \theta_1<\theta_2$ such that,
for all sufficiently large $n$ and  $q=\lfloor\theta n^{k-1}\rfloor$ with $\theta_1<\theta<\theta_2$, 
\[
h_L(n,q)<qc(n,L)\le t_L(n,q).
\]
In particular, $q_L(n)=\Theta_L(n^{k-1})$.
\end{thm}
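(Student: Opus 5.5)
We prove Theorem~\ref{thm-anchor-upper} with an explicit construction that beats the naive bound: start from an unbalanced complete $d$-partite $k$-graph instead of the balanced extremal one. The right inequality $qc(n,L)\le t_L(n,q)$ is the lower bound of Theorem~\ref{thm-tlnq1}, so only $h_L(n,q)<qc(n,L)$ needs work.

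\textbf{Setup.} Let $T$ be the balanced complete $d$-partite $k$-graph $T_k(n,d)$, which we take as the extremal $L$-free graph (Theorem~\ref{thm-c}). Fix a part sizing $n_i=n/d+x_i$ with $\sum x_i=0$; in the simplest choice one vertex class gains $m$ vertices and another loses $m$, with $m=\lfloor \beta n\rfloor$ and $\beta>0$ small. The complete $d$-partite $k$-graph $T'$ with these sizes is still $L$-free, since $\chi(F)=d+1$ and any copy of $L$ in a $d$-partite host would properly $d$-colour the core $F$. Its edge deficit is
\[
D:=e(T)-e(T')=\gamma m^2 n^{k-2}+O(m^3n^{k-3}),
\]
where $\gamma=\gamma(d,k)>0$ comes from the second-order expansion of $e_k(n_1,\dots,n_d)$ around the balanced point; the first-order term vanishes by symmetry. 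To reach $\mathrm{ex}(n,L)+q$ edges we must add $D+q$ edges to $T'$, and we put all of them inside a single part. The total budget is $D+q=\Theta(n^{k})\beta^2+\theta n^{k-1}$. To land at order $n^{k-1}$ overall, we instead take $m=\Theta(\sqrt{\theta'}\,n^{1/2})$, so that $D=\gamma m^2n^{k-2}=\Theta(n^{k-1})$.

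\textbf{Counting copies.} Each added edge creates copies of $L$, and for each we use the single-edge count in the unbalanced host. By the definition of $\zeta_L$ in Subsection~\ref{s2.1}, adding one edge inside part $i$ of a $d$-partite host with sizes $n/d+x_j$ creates
\[
\alpha_L n^{l-k}+\zeta_L\, x_i\, n^{l-k-1}+O\bigl(n^{l-k-1}+|x|^2n^{l-k-2}\bigr)
\]
copies. Choose the part $i$ and the sign of its shift so that $\zeta_Lx_i=-|\zeta_L|m$, i.e.\ shrink or enlarge the part according to the sign of $\zeta_L$. Copies using two or more added edges number $O((D+q)^2n^{l-2k})=O(n^{l-2})$, which is negligible against $qn^{l-k}\approx n^{l-1}$ because $k\ge3$. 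Each added edge therefore saves $|\zeta_L|m n^{l-k-1}$ copies relative to $c(n,L)$, but we must pay for $D$ extra edges. The total is
\[
(q+D)\bigl(\alpha_Ln^{l-k}-|\zeta_L|mn^{l-k-1}\bigr)+o(n^{l-1}).
\]
It is smaller than $qc(n,L)=q\alpha_Ln^{l-k}+O(qn^{l-k-1})$ exactly when
\[
D\alpha_L n < (q+D)|\zeta_L| m,
\]
up to lower-order terms. We also have to check that the edges inside part $i$ can be placed, for instance spread out or forming a sparse design, so that the $O(n^{l-k-1})$ error terms depend only on $|x|$ and not on the arrangement.

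\textbf{Optimizing.} Substitute $q=\theta n^{k-1}$ and $m=s\,n^{1/2}$, so $D=\gamma s^2 n^{k-1}$. All terms then scale as $n^{k-1/2}\cdot n^{l-k}$, and the condition reduces to
\[
\gamma s^2\alpha_L < (\theta+\gamma s^2)|\zeta_L|\, s\, n^{-1/2}\cdot n^{1/2}.
\]
Checking the scaling more carefully, the saving is $(q+D)|\zeta_L|mn^{l-k-1}\sim n^{k-1}\cdot n^{1/2}\cdot n^{l-k-1}$, while the cost is $D\alpha_Ln^{l-k}\sim n^{k-1}n^{l-k}$. The cost dominates, so $m\sim n^{1/2}$ is the wrong scale. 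Instead take $m=\beta n$ with $\beta$ a small constant. Then $D\approx\gamma\beta^2n^{k}$, which exceeds $q$. This is the main obstacle: $D$ must be comparable to $q$, yet $m$ must be linear in $n$ for the $\zeta_L$ correction to match $\alpha_L$.

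The resolution I would try is not to complete $T'$. Keep $T$ and move only $m$ vertices, deleting fewer cross edges, so that the deficit is $\Theta(m n^{k-2}\cdot m)$ only on the moved vertices' links. Alternatively, delete a cleverly chosen set of $\Theta(n^{k-1})$ edges, for instance all edges through a vertex set of size $\Theta(1)$, which shifts the effective part sizes by $O(1)$ while freeing $\Theta(n^{k-1})$ edges. Concretely, delete $p$ vertices' worth of cross edges from one part, effectively making it smaller by $p$, so $D\approx p\cdot\frac{d-1}{d}\binom{d-1}{k-1}(n/d)^{k-1}$. Then add $q+D$ edges inside a part, each saving about $|\zeta_L|p\,n^{l-k-1}$. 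Balancing
\[
D\alpha_L n^{l-k} < (q+D)|\zeta_L| p\, n^{l-k-1}
\]
with $D\asymp pn^{k-1}$ and $q=\theta n^{k-1}$ gives a linear inequality in $\theta$. It is satisfiable for $\theta$ in an interval $(\theta_1,\theta_2)$: the lower end comes from needing $q+D$ large enough, the upper end from $\delta_0$ and from the error terms. The hypothesis $|\zeta_L|>\frac{4d(d-k+1)}{k-2}\alpha_L$ is precisely what makes this interval nonempty once the constants of $D$ are computed. I would try this computation first and expect matching the constant $\frac{4d(d-k+1)}{k-2}$ to be the delicate step. Finally, $q_L(n)=\Theta_L(n^{k-1})$ follows by combining this with Theorem~\ref{thm-ht}.
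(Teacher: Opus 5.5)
There is a genuine gap: the proposal never reaches a working construction, and the one you finally settle on fails.

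\textbf{Wrong scale for the vertex moves.} You insisted that the edge deficit $D$ be of order $n^{k-1}$, but nothing requires that, since $q$ already supplies the $\Theta(n^{k-1})$ added edges. By your own formula $D=\gamma m^2n^{k-2}$, moving a bounded number of vertices costs only $O(n^{k-2})$ crossing edges. The paper moves exactly one vertex, into or out of the doubled part $i$ according to the sign of $\zeta_L$, and loses only $\tau\le 2\binom{d-2}{k-2}(n/d)^{k-2}+O(n^{k-3})$ crossing edges. The cost $\tau c(n,L)$ and the gain $q|\zeta_L|n^{l-k-1}$ are then both of order $n^{l-2}$. The comparison is linear in $\theta$ and gives $\theta_1=2\alpha_L\binom{d-2}{k-2}/(d^{k-2}|\zeta_L|)$. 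This requires comparing each single-edge count with $c(n,L)$ itself, with error $O(n^{l-k-2})$ as in Lemma~\ref{lem-cs}. Your comparison with $\alpha_Ln^{l-k}$ up to $O(n^{l-k-1})$ is too lossy, because at this scale the $\zeta_L$ term is itself of order $n^{l-k-1}$.

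Your final suggestion, deleting all crossing edges through $p$ vertices, costs $D\asymp pn^{k-1}$ edges, hence $\Theta(p\,n^{l-1})$ copies. The gain is only $O((q+D)p\,n^{l-k-1})=O(n^{l-2})$. So the inequality $D\alpha_Ln^{l-k}<(q+D)|\zeta_L|p\,n^{l-k-1}$ fails for large $n$. Isolating vertices is also not a rebalancing, since the number of usable vertices drops, so $\zeta_L$ is not even the relevant derivative there.

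\textbf{Copies with two or more added edges.} You compare these with $qn^{l-k}\approx n^{l-1}$. But the leading term $q\alpha_Ln^{l-k}$ cancels against $qc(n,L)$, so the entire margin is of order $n^{l-2}$. Random or ``spread out'' placement of $\Theta(n^{k-1})$ simple-pattern edges typically creates $\Theta(\theta^2n^{l-2})$ such copies, the same order as the margin.

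The paper removes these copies entirely with the anchored family of Lemma~\ref{lem-anchored}. Every added edge contains a fixed vertex $y$ outside part $i$, two vertices of part $i$, and $k-3$ vertices in distinct other parts. In any copy, $y$ would have to be a degree-one expansion vertex, so no copy uses two added edges, and $\mathcal N_L(H)=\sum_{e\in B}\mathcal N_L(K'\cup\{e\})$ holds exactly.

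The size of this family, $(1+o(1))\frac12\binom{d-2}{k-3}(n/d)^{k-1}$, is what limits $q+\tau$ and gives $\theta_2=\binom{d-2}{k-3}/(2d^{k-1})$. The hypothesis $|\zeta_L|>\frac{4d(d-k+1)}{k-2}\alpha_L$ is precisely the condition $\theta_1<\theta_2$. So the upper end of the interval comes neither from $\delta_0$ nor from error terms.

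Without the anchored construction you cannot recover the stated constant. You also need simple-pattern edges: an edge with all $k$ vertices in one part carries $\binom k2$ times as many copies.
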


Let $F_{s,t,d}$ be obtained from the complete
$d$-partite graph with part sizes $s,t,\ldots,t$ by adding an edge 
in the part of size $s$. Applying  Theorem \ref{thm-anchor-upper} to $F_{s,t,d}$ gives corollary \ref{cor-multipartite-threshold}. 

\begin{cor}\label{cor-multipartite-threshold}\label{lem:critical}
Let $d>k\ge3$ and $s,t\ge2$  be integers.
Then $ F_{s,t,d}$ has chromatic number $d+1$, and the added edge is its unique
critical edge. Moreover,
\begin{equation}\label{eq-multipartite-zeta}
\zeta_{\mathcal K_k(F_{s,t,d})}=d\alpha_{\mathcal K_k(F_{s,t,d})} \bigl((s-t)(1-(k-2)t)-2\bigr).
\end{equation}
Thus, if
$\bigl|(s-t)(1-(k-2)t)-2\bigr|>\frac{4(d-k+1)}{k-2},$
then $q_{\mathcal K_k(F_{s,t,d})}(n)=\Theta_L(n^{k-1})$.
\end{cor}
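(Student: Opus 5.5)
The corollary has three parts: the chromatic/criticality claim, the identity \eqref{eq-multipartite-zeta}, and the final threshold statement. I would treat them in that order. The last part is then an immediate application of Theorem~\ref{thm-anchor-upper}.

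\textbf{Chromatic number and critical edge.} Write the parts of the complete $d$-partite graph as $S,T_2,\dots,T_d$ with $|S|=s$ and $|T_i|=t$, and let $xy$ be the added edge inside $S$. Deleting $xy$ leaves a complete $d$-partite graph, so $\chi(F_{s,t,d}-xy)=d$.

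Conversely, suppose $F_{s,t,d}$ had a proper $d$-colouring.
\begin{itemize}
\item Since $s,t\ge 2$, the complete $d$-partite subgraph has a unique proper $d$-colouring up to permutation of colours.
\item Indeed, vertices in different parts are adjacent, so any two parts receive disjoint colour sets. With $d$ parts and $d$ colours, each part must then be monochromatic.
\item Hence $x$ and $y$ get the same colour, a contradiction, so $\chi(F_{s,t,d})=d+1$.
\end{itemize}
For uniqueness of the critical edge, let $e\ne xy$ be any other edge. Then $e$ joins two different parts, and $F-e$ still contains $xy$ together with a spanning complete $d$-partite structure on the remaining parts. The same monochromatic-part argument (using $t\ge2$, or $s\ge 3$ when $e$ touches $S$) shows $\chi(F-e)=d+1$.

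\textbf{Computing $\zeta$.} I would use the definition of $\zeta_L$ from Subsection~\ref{s2.1}: it is the first-order variation of the single-edge count $c$ when the part sizes of the extremal $d$-partite $k$-graph are perturbed. So I would first write the single-edge count as a polynomial in the part sizes $n_1,\dots,n_d$.

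Add one edge $e$ having two vertices $u,v$ in $V_1$ and its other $k-2$ vertices in distinct other parts. Since $xy$ is the unique critical edge, every copy of $L=\mathcal K_k(F_{s,t,d})$ through $e$ is arranged as follows.
\begin{itemize}
\item $xy$ maps to $uv$ (in one of two orders), and the expansion vertices of $xy$ map to the rest of $e$.
\item By the uniqueness of the $d$-colouring, the remaining $s-2$ vertices of $S$ embed into $V_1$, and the $T_i$ embed into the other parts in some bijective order.
\item The $k-2$ expansion vertices of each other edge $ab$ of $F$ lie in $k-2$ distinct parts avoiding the parts of $a$ and $b$.
\end{itemize}
The leading term is therefore a product of falling powers, of the form
\[
n_1^{s-2}\prod_{i\ge2} n_i^{t}\cdot\prod_{ab}\sigma_{k-2}\bigl(n_j: j\ne \mathrm{part}(a),\mathrm{part}(b)\bigr),
\]
divided by $|\Aut(L)|$-type factors, where $\sigma_{k-2}$ is the elementary symmetric polynomial.

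Next I would perturb to $n_1=n/d+\lambda$ and $n_i=n/d-\lambda/(d-1)$ (or whatever normalisation $\zeta_L$ uses) and take the logarithmic derivative at the balanced point. Each contribution is proportional to $\alpha_L$:
\begin{itemize}
\item the factor $n_1^{s-2}$ against $n_i^{t}$ contributes $(s-2)-t\cdot\frac{1}{d-1}\cdots$;
\item the expansion factors contribute through the number of edges of $F$ meeting $S$ (namely $st(d-1)$ cross edges plus $xy$) versus the edges among the $T_i$;
\item each $\sigma_{k-2}$ excluding $V_1$ versus including it contributes a term linear in $(k-2)$.
\end{itemize}
Collecting these should give $d\alpha_L\bigl((s-t)(1-(k-2)t)-2\bigr)$. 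The $-2$ comes from the two vertices of $e$ fixed in $V_1$, and the $(k-2)t$ comes from the expansion vertices of the $S$–$T_i$ edges. This bookkeeping is the main obstacle: I expect to verify it by computing the sums over the three edge types ($xy$, $S$–$T$ edges, $T$–$T$ edges) separately and checking the result against small cases.

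\textbf{Conclusion.} Given \eqref{eq-multipartite-zeta}, the hypothesis
\[
\bigl|(s-t)(1-(k-2)t)-2\bigr|>\frac{4(d-k+1)}{k-2}
\]
is exactly $|\zeta_L|>\frac{4d(d-k+1)}{k-2}\alpha_L$, since $\alpha_L>0$. Theorem~\ref{thm-anchor-upper} then gives $q_L(n)=O(n^{k-1})$. Combined with Theorem~\ref{thm-ht}, which gives $q_L(n)=\Omega(n^{k-1})$, this yields $q_L(n)=\Theta_L(n^{k-1})$.
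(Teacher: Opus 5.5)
Your proof that $\chi(F_{s,t,d})=d+1$ and that $xy$ is the unique critical edge is fine. The paper instead exhibits a $K_{d+1}$ on $x,y$ and one vertex of each $T_i$ avoiding the deleted edge, which uses only $t\ge2$. Your final step is also correct: the hypothesis is equivalent to $|\zeta_L|>\frac{4d(d-k+1)}{k-2}\alpha_L$, so Theorems~\ref{thm-anchor-upper} and~\ref{thm-ht} apply.

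The gap is the identity \eqref{eq-multipartite-zeta} itself, which is the real content of the corollary. You take the same route as the paper, differentiating the leading single-edge polynomial $R_1$ at the balanced point. But you stop at ``collecting these should give'' and defer the bookkeeping. Two things are missing.

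First, you need a reason why the answer is $\alpha_L$ times a number. Every bijection $\{T_2,\dots,T_d\}\to\{V_2,\dots,V_d\}$ yields the same product, so
\[
R_1=\kappa\, n_1^{s-2}\prod_{j\ge2}n_j^{t}\prod_{j\ge2}\sigma_{k-2}(n_h:h\ne1,j)^{st}\prod_{2\le i<j\le d}\sigma_{k-2}(n_h:h\ne i,j)^{t^2}
\]
for a constant $\kappa>0$. Along your direction $n_1=\frac1d+\lambda$, $n_j=\frac1d-\frac{\lambda}{d-1}$, the derivative is exactly $\partial_1R_1-\partial_2R_1=\zeta_L$ by symmetry in $n_2,\dots,n_d$, so there is no normalisation ambiguity. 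Hence $\zeta_L=R_1(\frac1d,\dots,\frac1d)\cdot\frac{d}{d\lambda}\log R_1=\alpha_L\cdot\frac{d}{d\lambda}\log R_1$ at $\lambda=0$.

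Second, the contributions must actually be computed, and the one you wrote, $(s-2)-t\cdot\frac{1}{d-1}\cdots$, is not correct. The factor $n_1^{s-2}\prod_{j\ge2}n_j^t$ contributes $d(s-2)-dt$: the $\frac1{d-1}$ cancels against the $d-1$ parts, and each exponent is multiplied by $1/n_h=d$.

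For the expansion factors, at the balanced point $\partial_h\log\sigma_{k-2}=\frac{d(k-2)}{d-2}$ for each of its $d-2$ variables.
\begin{itemize}
\item Each of the $st(d-1)$ edges between $S$ and the $T_j$ involves only variables of index $\ge2$, all decreasing. It contributes $-\frac{d(k-2)}{d-1}$.
\item Each of the $\binom{d-1}{2}t^2$ edges among the $T_j$ involves $n_1$ and $d-3$ decreasing variables. It contributes $\frac{d(k-2)}{d-2}\bigl(1-\frac{d-3}{d-1}\bigr)=\frac{2d(k-2)}{(d-1)(d-2)}$.
\end{itemize}
Summing,
\[
\frac{\zeta_L}{\alpha_L}=d(s-2-t)-d(k-2)st+d(k-2)t^2=d\bigl((s-t)(1-(k-2)t)-2\bigr).
\]

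The paper packages the same computation as mean exponents. At the equal vector, $\partial_h$ of a monomial is $d$ times its exponent of $n_h$. The mean exponent of $n_1$ is $s-2+\frac{(k-2)(d-1)}{2}t^2$, since only the edges among the $T_j$ can place an expansion vertex in part $1$, each with probability $\frac{k-2}{d-2}$. The mean exponent of $n_2$ then follows from the total degree $l-k$ and symmetry. This avoids computing the $S$--$T$ contribution separately. Your per-factor log-derivative is equivalent, but it has to be carried out for the proposal to prove the statement.
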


Thus the order $n^{k-1}$ in Theorem~\ref{thm-ht} is sharp
for every fixed  $d>k\ge3$.
For example, $L=\mathcal K_3(F_{8,3,4})$ satisfies
$q_L(n)=\Theta(n^2)$.

We also give a sufficient condition for a larger upper bound of $q_L(n)$.
It combines the condition $\zeta_L=0$ with a condition on
copies through one prescribed vertex.
The notation in the following theorem is defined in Section~\ref{s5}.

\begin{thm}\label{thm-stationary-link}\label{thm:criterion}
Let  $d > k \ge 3$ be integers, and let $L\in \mathcal{F}(k,d)$.
Suppose $\zeta_L=0$ and for every link vector $\boldsymbol w$,
\begin{equation}\label{eq-link-rigidity}
R_L(\boldsymbol w)\le\alpha_L\max\{\eta(\boldsymbol w),0\}
\quad\Longrightarrow\quad
\min_{1\le i\le d}p_i(\boldsymbol w)=0.
\end{equation}
There is a constant $\delta_L>0$ such that, for all sufficiently large $n$ and $1\le q\le\delta_Ln^{k-1/2}$,
$h_L(n,q)=t_L(n,q)$. In particular,
$q_L(n)=\Omega_L(n^{k-1/2})$.
\end{thm}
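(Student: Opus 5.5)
We follow the strategy behind Theorem~\ref{thm-ht} and push each error term one order further, using the two hypotheses to remove the obstructions that limit that argument to $q=O(n^{k-1})$. The setup is as follows. Let $G$ be an $n$-vertex $k$-graph with $\mathrm{ex}(n,L)+q$ edges, $q\le\delta_L n^{k-1/2}$, and at most $t_L(n,q)\le qc(n,L)+O(q^2n^{l-2k})$ copies of $L$ (Theorem~\ref{thm-tlnq1}). We must show that $G$ has at least $t_L(n,q)$ copies. Removal together with stability for $\mathcal K_k(F)$ gives a partition $V_1,\dots,V_d$ such that $G$ is $o(n^k)$-close to the complete $d$-partite $k$-graph $T$ with parts $V_i$ (edges meeting $k$ distinct parts). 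We then iteratively delete vertices of low crossing degree and, in the spirit of Pikhurko--Yilma, compare against the extremal graph on the remaining vertices. This reduces to the case where every vertex has almost all of its crossing link, the partition is locally optimal, and $G$ has $B$ ``bad'' edges (inside some part, or meeting fewer than $k$ parts) and $M$ missing crossing edges, with $B-M=q+(e(T_{\rm ext})-e(T))$.

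The count is then expanded as
\[
\#L(G)\ \ge\ B\,C(n,L)\ +\ \zeta_L\cdot(\text{first-order part-size deviation})\cdot B\ -\ (\text{loss from missing edges})\ -\ (\text{overlap terms}).
\]
In the proof of Theorem~\ref{thm-ht}, two terms force $q\lesssim n^{k-1}$. The first is the drift term: the part sizes may deviate from balanced by $\Delta$, and the gain $|\zeta_L|\Delta n^{l-k-1}B$ must be paid for by an edge loss of order $\Delta^2 n^{k-2}$. Because $\zeta_L=0$, this first-order term vanishes, so only second-order effects $O(\Delta^2 n^{l-k-2})B$ remain. These are dominated by the edge loss provided $q\le \delta n^{k-1/2}$; this is exactly where the exponent $k-1/2$ enters, by balancing $\Delta^2 n^{k-2}$ against $q\Delta^2 n^{l-k-2}/C(n,L)$.

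The second term is the clustering of bad edges at a single vertex $v$. The contribution of $v$ is governed by its link vector $\boldsymbol w$, which records the normalized densities of $v$'s link into the parts and its bad edges. The copies through $v$ are then $R_L(\boldsymbol w)n^{l-1}$ to leading order, compared with the ``savings'' $\alpha_L\eta(\boldsymbol w)$. Hypothesis~\eqref{eq-link-rigidity} states that whenever $v$ does not pay for its bad edges, some $p_i(\boldsymbol w)=0$. In that case $v$ effectively has no neighbours in part $i$, so moving $v$ into $V_i$ makes its edges crossing. This contradicts local optimality, unless $v$ is an extremal-type vertex already accounted for. A compactness argument on the space of link vectors converts the strict implication into a quantitative gap: $R_L(\boldsymbol w)\ge\alpha_L\eta(\boldsymbol w)+c\min_i p_i(\boldsymbol w)$. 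Summing over vertices shows that high-degree bad vertices cost more than they save.

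Finally, when every vertex has only $o(n^{k-1})$ bad edges, the bad edges are spread out and their pairwise overlaps contribute $O(B^2 n^{l-2k})$. This matches the same overlap term in $t_L$, which lets us conclude that $G$ can be transformed into (extremal graph $+\,q$ edges) without increasing the count, so $\#L(G)\ge t_L(n,q)$. I expect the main obstacle to be the second step: making the compactness gap uniform and ensuring that vertices with $\min_i p_i$ small but nonzero, which lie near the boundary where the hypothesis just fails, can be handled by the re-partitioning argument within the $n^{k-1/2}$ budget.
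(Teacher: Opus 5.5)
You correctly identify the two obstructions in the proof of Theorem~\ref{thm-ht}: the first-order drift of the single-edge count, and bad edges clustering at one vertex. You also assign each hypothesis its right role. But the step that actually produces the range $q\le\delta_L n^{k-1/2}$ is missing.

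\textbf{Where the $n^{k-1/2}$ barrier comes from.} Your balancing computation does not give $k-1/2$. Comparing the edge loss $\Delta^2n^{k-2}$ with $q\Delta^2n^{l-k-2}/c(n,L)=\Theta(q\Delta^2n^{-2})$ only requires $q=o(n^k)$. Indeed, with $\zeta_L=0$ the single-edge drift costs $O(A^2n^{l-k-2})=O(t\,n^{l-2k})$ per bad edge, which is harmless for all $q=o(n^k)$.

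The real bottleneck is the copies using at least two bad edges. Their number is $O(q^2n^{l-2k})$. Under the $s$ rebalancing moves it changes at \emph{first} order, by $O(s\,q^2n^{l-2k-1})$, and so do the interactions of the edges re-added in place of bad edges meeting moved vertices. The condition $\zeta_L=0$ says nothing about this term. Against the saving $(t+|M|)\alpha_Ln^{l-k}$, with $t=\Omega(s^2n^{k-2})$ and $s\ge1$, it forces $q^2=o(n^{2k-1})$.

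For the same reason, your final claim that the overlaps $O(B^2n^{l-2k})$ ``match the same overlap term in $t_L$'' is not an argument. The saving you must detect can be as small as $\alpha_Ln^{l-k}$ (e.g.\ $t=0$, $|M|=1$), while these overlap terms can be of order $n^{l-1}$. You need a competitor built from $G$'s own bad edges: rebalance, delete $\max\{r,t+|M|\}$ edges of $B$ including every one meeting a moved vertex, and top up with random simple-pattern edges. Then the multi-edge counts agree up to the drift above. You also need the fill-in bound $\mathcal N_L(K\cup B)-\mathcal N_L(G)\le C(|M|Y_*n^{l-2k+1}+|M||B|n^{l-2k})=o(|M|n^{l-k})$.

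\textbf{The clustering step.} This also fails as proposed. Compactness only gives a gap $c(\varepsilon)>0$ on $\{\min_ip_i(\boldsymbol w)\ge\varepsilon\}$. The linear gap $R_L\ge\alpha_L\eta+c\min_ip_i$ is false. Take a vertex of $V_i$ with complete crossing link plus own-part link sets of density $\beta$. Copies using exactly one such edge contribute exactly $\alpha_L\eta(\boldsymbol w)$ to leading order, so $R_L-\alpha_L\eta=O(\beta^2)$ while $\min_jp_j=\Theta(\beta)$. You can check this directly on the paper's $L_d$ via \eqref{eq:difference}. Summing per-vertex estimates with $o(n^{l-1})$ errors would fail anyway.

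The paper needs neither. Work with a minimizer ($\mathcal N_L(G)=h_L(n,q)$), not an arbitrary $G$ with at most $t_L(n,q)$ copies. For a single vertex $x$, replace its link by the full crossing link and restore the edge count with $r\le\eta(\boldsymbol w(x))n^{k-1}+o(n^{k-1})$ random simple-pattern edges avoiding $x$. Minimality then gives $R_L(\boldsymbol w(x))\le\alpha_L\max\{\eta(\boldsymbol w(x)),0\}+o(1)$. Maximality of $\Phi$ gives $p_i(\boldsymbol w(x))\le p_j(\boldsymbol w(x))+o(1)$ for $x\in V_i$. Together with compactness and \eqref{eq-link-rigidity}, this yields only the qualitative $Y_*=o(n^{k-1})$. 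That suffices, because $Y_*$ enters only through the fill-in term above.

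\textbf{The vertex-deletion step.} The Pikhurko--Yilma deletion step is unnecessary, and it cannot dispose of the vertices the hypothesis is about, namely those trading crossing link for bad edges. Deleting such a $v$ lowers the excess by $\deg(v)$ minus the minimum degree of $T^{(k)}(n,d)$. Closing that step therefore needs the copies through $v$ to be at least that many multiples of $c(n,L)$, which is the link inequality itself.
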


Our first graph group gives a superquadratic lower bound.
\begin{thm}\label{thm:main}
Let $4\le d\le 25$ and $L_d=\mathcal K_3(F_{2,3,d})$,  then $q_{L_d}(n)=\Omega(n^{5/2})$. In particular,
there is a constant $\delta_d>0$ such that the following holds
for all sufficiently large $n$.
If $1\le q\le\delta_d n^{5/2}$, then every $n$-vertex $3$-graph
with $\text{ex}(n,L_d)+q$ edges that has $h_{L_d}(n,q)$ copies
of $L_d$ contains $T^{(3)}(n,d)$.
\end{thm}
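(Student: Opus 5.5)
The plan is to derive Theorem~\ref{thm:main} from Theorem~\ref{thm-stationary-link} applied with $k=3$ and $L_d=\mathcal K_3(F_{2,3,d})$. The hypothesis $d>k=3$ holds since $d\ge4$. By Corollary~\ref{cor-multipartite-threshold}, $F_{2,3,d}$ is color-critical with $\chi=d+1$, so $L_d\in\mathcal F(3,d)$. Two conditions then have to be checked: $\zeta_{L_d}=0$, and the link-rigidity implication~\eqref{eq-link-rigidity}.

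\textbf{Step 1: $\zeta_{L_d}=0$.} This follows from \eqref{eq-multipartite-zeta} with $s=2$, $t=3$, $k=3$:
\[
\zeta_{L_d}=d\alpha_{L_d}\bigl((2-3)(1-3)-2\bigr)=d\alpha_{L_d}(2-2)=0.
\]
This explains the choice $(s,t)=(2,3)$: it is exactly the stationary case of the formula.

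\textbf{Step 2: the link condition~\eqref{eq-link-rigidity}.} This is the main obstacle. Using the definitions of Section~\ref{s5}, I would write, for a link vector $\boldsymbol w$ of a vertex $v$ added to $T^{(3)}(n,d)$, explicit formulas for
\begin{itemize}
\item $R_L(\boldsymbol w)$, the normalized count of copies of $L_d$ through $v$ in which $v$ plays a non-critical role;
\item $\eta(\boldsymbol w)$, the edge gain relative to the best part;
\item $p_i(\boldsymbol w)$, the link densities toward part $i$.
\end{itemize}
For $F_{2,3,d}$ these should be polynomials in the $p_i$, and the pair link densities, computable by enumerating the embeddings of $v$ into $F_{2,3,d}$: into the part of size $2$, into a part of size $3$, or as an expansion vertex. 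The goal is to show that if every $p_i>0$, then $R_L(\boldsymbol w)>\alpha_L\max\{\eta,0\}$. Near the boundary, $R_L$ grows at least linearly in $\min_i p_i$ and carries a positive constant, while $\eta$ is controlled by the deficit. I would reduce the problem by symmetry: sort the $p_i$ and average over the parts other than the smallest, which brings it down to a few variables. Then I would verify the resulting inequality on a compact domain, allowing $d$ to vary between $4$ and $25$. The restriction $d\le25$ indicates that the inequality is verified numerically or case by case for each such $d$, presumably the ``R program'' in the acknowledgements. So the plan is a finite symbolic reduction followed by a certified numerical check for each $d$ in $[4,25]$. Rigorous interval bounds near $\min_i p_i=0$ are where I would expect most of the difficulty.

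\textbf{Step 3: the structural conclusion.} Theorem~\ref{thm-stationary-link} then gives $h_{L_d}(n,q)=t_{L_d}(n,q)$ for $q\le\delta_d n^{5/2}$, hence $q_{L_d}(n)=\Omega(n^{5/2})$. The stronger claim, that every optimal graph contains $T^{(3)}(n,d)$, should come from the proof of Theorem~\ref{thm-stationary-link}. That proof presumably starts from stability, placing an optimal $G$ near $T^{(3)}(n,d)$. It then shows that any vertex with a non-degenerate link (all $p_i>0$), or any missing crossing edge, creates strictly more copies than it saves; this uses the strict form of~\eqref{eq-link-rigidity} obtained in Step~2. So I would check that the proof's inequalities become strict unless $G\supseteq T^{(3)}(n,d)$. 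That yields the containment statement for every extremal configuration, not just equality of the counts.
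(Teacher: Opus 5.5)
Your Steps 1 and 3 agree with the paper. $\zeta_{L_d}=0$ is read off from \eqref{eq-multipartite-zeta} exactly as you do. The containment $T^{(3)}(n,d)\subseteq H$ is already what the proof of Theorem~\ref{thm-stationary-link} concludes ($t+|M|=0$), so only \eqref{eq-link-rigidity} has to be verified.

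That verification is the whole content of the theorem, and your Step 2 does not supply it. You do not compute $R_{L_d}$, and your description of it is inaccurate. Every edge avoiding the root must be crossing, so the root always lies on the critical edge $uv$: it is $u$, $v$, or the expansion vertex of $uv$, since deleting any other original vertex leaves a $K_{d+1}$. With $W_i=\sum_{j\ne i}w_{ij}$, the paper obtains
\[
\frac{R_{L_d}(\boldsymbol w)}{\alpha_{L_d}}=\frac{\sum_i w_{ii}}{2d^2}+\frac{P_d(W_1+w_{11},\ldots,W_d+w_{dd})}{d^2(d-2)^{3(d-1)}},\qquad P_d(\boldsymbol r)=\sum_i r_i\prod_{j\ne i}r_j^3 .
\]
The diagonal term here cancels exactly against the one in $\eta$. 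After the easy cases ($\eta\le0$ forces $R_{L_d}=0$, and $\sum_{i<j}w_{ij}<\binom{d-1}{2}$ is impossible), monotonicity of $P_d$ reduces everything to the weighted-graph inequality $P_d(W_1,\ldots,W_d)\ge(d-2)^{3(d-1)}\bigl(\sum_{i<j}w_{ij}-\binom{d-1}{2}\bigr)$, which must be strict above $\binom{d-1}{2}$.

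This inequality is degenerate at the boundary. Take $w_{ij}=1$ for $2\le i<j\le d$, $w_{12}=\varepsilon$, and all other weights zero. Then both sides equal $\varepsilon(d-2)^{3(d-1)}+O(\varepsilon^2)$, so $R_{L_d}-\alpha_{L_d}\eta$ is only of order $(\min_i p_i)^2$. In this example the linear growth of $R_{L_d}$ in $\min_i p_i$ is matched exactly by $\alpha_{L_d}\eta$. So your boundary heuristic gives nothing, and no interval-arithmetic check can certify strict positivity near this zero set without a separate analytic argument.

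Your proposed reduction also fails. The paper proves that $\log P_d$ is concave (Lemma~\ref{lem:concavity}). Being symmetric, $P_d$ is then Schur-concave, so averaging over the parts other than the smallest can only increase $R_{L_d}$, while $\eta$ and $\sum_{i<j}w_{ij}$ are unchanged. Hence the inequality for the averaged vector does not imply it for the original one: the symmetrization runs in the wrong direction.

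The paper uses concavity the other way. Shifting weight between two fractional pairs keeps the total fixed and is log-concave in the shift, so some minimizer has at most one weight in $(0,1)$. This leaves finitely many graphs $G$ with one distinguished non-edge of weight $t\in[0,1)$. For each, the defect is a degree-$6$ polynomial in $t$, and its Bernstein coefficients are checked in exact integer arithmetic: all $B_i\ge0$, $B_6>0$, and $B_0>0$ when $e(G)>\binom{d-1}{2}$. The graphs are enumerated via degree signatures for $4\le d\le25$.

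Without the explicit formula, the diagonal cancellation, and this extreme-point reduction, your plan is left with a continuous verification in $\binom{d+1}{2}$ variables with a degenerate boundary, and averaging cannot reduce it.
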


The result gives a lower bound on the threshold.
Its exact order remains open.
Our  explicit family gives the
general quadratic lower bound.

\begin{thm}\label{thm:quadratic}
For every fixed integer $d\ge4$,
$q_{\mathcal K_3(F_{2,5,d})}(n)=\Theta(n^2).$ In particular, there is a constant $A_d>0$ such that the following holds
for all sufficiently large $n$ and $q=o(n^3)$.
If $ A_d n^2\le q$ then
\[ h_{\mathcal K_3(F_{2,5,d})}(n,q)<q\cdot t_{\mathcal K_3(F_{2,5,d})}(n,1)\le t_{\mathcal K_3(F_{2,5,d})}(n,q).\]
\end{thm}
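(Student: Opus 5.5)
The plan is to derive Theorem~\ref{thm:quadratic} from Theorem~\ref{thm-ht} and Theorem~\ref{thm-anchor-upper}, via Corollary~\ref{cor-multipartite-threshold}, specialised to $k=3$, $s=2$, $t=5$. The lower bound $q_{\mathcal K_3(F_{2,5,d})}(n)=\Omega(n^{2})$ is immediate from Theorem~\ref{thm-ht}, because $F_{2,5,d}$ is color-critical with $\chi=d+1$ and $d>3=k$; this is the first assertion of Corollary~\ref{cor-multipartite-threshold}. For the upper bound, formula \eqref{eq-multipartite-zeta} with $k=3,s=2,t=5$ gives
\[
(s-t)(1-(k-2)t)-2=(-3)(1-5)-2=10 .
\]
Hence $\zeta_L=10d\,\alpha_L$. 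The hypothesis of Theorem~\ref{thm-anchor-upper} requires $|\zeta_L|>4d(d-2)\alpha_L$, i.e.\ $10>4(d-2)$. This holds only for $d\le 4$, so the corollary alone does not cover all $d\ge4$. I would therefore not invoke Theorem~\ref{thm-anchor-upper} as a black box, but reprove its construction for this family with a sharper estimate.

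The construction is as follows. Start from $T^{(3)}(n,d)$, the complete $d$-partite $3$-graph whose edges meet three distinct parts. Move a set of $m=\Theta(q/n)$ vertices from one part to another; because $\zeta_L\neq0$, the direction of the move is chosen so that each moved vertex lowers the single-edge count $c(n,L)$ at first order. Then add $q$ edges inside parts, located so that they use the favourable first-order change. The deficit $\mathrm{ex}(n,L)-e(G_0)$ created by the unbalanced partition is $\Theta(m^2 n)$, and it must be compensated by extra added edges. The count of copies is then roughly
\[
q\bigl(c(n,L)-|\zeta_L|\tfrac{m}{n}n^{l-k-1}\cdot\Theta(1)\bigr)+O(m^2 n\,c(n,L))+O(q^2n^{l-6}),
\]
where the last term is the pairwise-interaction error from Theorem~\ref{thm-tlnq1}. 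Taking $m=\lambda q/n^{2}\cdot n$, i.e.\ $m$ proportional to $q/n$, and optimising $\lambda$ gives a net saving of order $q^2 n^{l-6}(c_1|\zeta_L|/\alpha_L - c_2)$ against $qc(n,L)$.

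The main obstacle is that the general constant $4d(d-k+1)/(k-2)$ is too crude here. I would bound the two competing terms exactly for $s=2,t=5$. Added edges are placed inside the part of size-role $s=2$, so that each copy needs both endpoints of the critical edge inside the added edges. The interaction terms then count copies using two added edges sharing a vertex. I would show these are of order $q^2n^{l-7}$ when the added edges form a linear (partial Steiner) system, which is possible whenever $q\le \epsilon n^{2}$ per part. With the interaction term lowered to $q^2 n^{l-7}$, it becomes negligible against the first-order gain $\Theta(q\cdot (m/n) n^{l-3})$ together with the deficit $m^2 n^{l-3}$. Optimising $m$ then yields a gain $\Theta(q^2 n^{l-7})\cdot(\zeta_L^2/\alpha_L)$ minus $O(q^2n^{l-7})$ with constants independent of the ratio condition. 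Because $\zeta_L/\alpha_L=10d$ grows with $d$, while the deficit coefficient grows only like $d$, I expect the gain to dominate once $q\ge A_d n^2$ for a suitable $A_d$, and also for larger $q=o(n^3)$ after rescaling $m$.

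Finally, $qc(n,L)\le t_L(n,q)$ is the left inequality of Theorem~\ref{thm-tlnq1}. Combined with the lower bound, this gives $q_{\mathcal K_3(F_{2,5,d})}(n)=\Theta(n^2)$. The step I regard as most uncertain is checking that this first-order gain really beats the imbalance cost for every $d\ge4$, and not merely for $d=4$.
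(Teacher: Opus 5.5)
\textbf{There is a genuine gap: the interaction term.} Your outline has the same shape as the paper's proof: shrink the doubled part $V_1$ to exploit $\zeta_L=10d\alpha_L>0$, pad with $\tau$ extra triples of $\mathcal A_1$, and control copies that use several added triples. You are also right that Theorem~\ref{thm-anchor-upper} only covers $d=4$ here.

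The problem is that all three competing quantities have the same order.
\begin{itemize}
\item With $m$ moved vertices the one-edge saving is about $\zeta_L m q n^{l-4}$. There is no extra factor $1/n$.
\item The deficit costs $\tau c(n,L)\approx\frac{d-2}{2(d-1)}\alpha_L m^2 n^{l-2}$.
\item So the optimal $m$ is $\Theta(q/n^2)$, which is bounded when $q=\Theta(n^2)$. It is not $\Theta(q/n)$: with $m\propto q/n$ the deficit swamps the gain.
\item The net saving is then $\Theta(q^2n^{l-6})$. This is exactly the order of the $O(q^2n^{l-6})$ two-edge term from Theorem~\ref{thm-tlnq1}.
\end{itemize}
The argument therefore stands or falls with the exact constant of the two-edge term, which you never compute.

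\textbf{Your way around this is false.} In a copy using two triples of $\mathcal A_1$, one is the expansion $\{u,v,x\}$ of the critical edge. The other is the expansion $\{u,y,w\}$ of an edge $uw$ whose expansion vertex $y$ lies in $V_1$. The two triples meet in exactly one vertex, and a linear system allows that. A linear family of $q$ such triples has $q$ distinct $V_1$-pairs. By convexity it has $\Omega(q^2/n)$ pairs of triples meeting in a vertex of $V_1$, and each such pair lies in $\Theta(n^{l-5})$ copies. So the interaction is still $\Theta(q^2n^{l-6})$, not $O(q^2n^{l-7})$.

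\textbf{Your heuristic in $d$ is also wrong.} For $q=An^2$ the paper's Lemma~\ref{two-edge-count} gives the two-edge term $\frac{20d^3}{d-2}\alpha_LA^2n^{l-2}$. This is $\Theta(d^2)$, just like the gain. The theorem holds only because the optimized one-edge saving $\frac{50d^2(d-1)}{d-2}\alpha_LA^2n^{l-2}$ beats it by a constant factor, since $50(d-1)>20d$. Completing the square gives the net coefficient $-\frac{10d^2(3d-5)}{d-2}$. That negative term then absorbs the $O_L(A+1)n^{l-2}$ errors for $A\ge A_d$.

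\textbf{Your plan is also internally inconsistent.} A linear system caps $q$ at about $n^2/(2d^2)$. That contradicts your concluding claim for all $q\ge A_dn^2$, let alone larger $q=o(n^3)$.

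\textbf{A possible repair.} If you want to kill interactions by structure, the structure must be non-linear. Choose a matching $\mathcal M$ in $V_1$ and add only triples $\{a,b,x\}$ with $\{a,b\}\in\mathcal M$ and $x\notin V_1$.
\begin{itemize}
\item The added triples in any copy then have pairwise disjoint $V_1$-pairs, because equal pairs would mean two shared vertices.
\item One checks that this forces the monochromatic original edges to form a matching in $F_{2,5,d}$. Hence the colouring is the standard one, with $u,v$ the only original vertices of colour $1$.
\item Any second added triple would then contain $u$ or $v$, which is impossible. So every copy uses exactly one added triple.
\end{itemize}
With one vertex moved out of a smallest part, this gives $h_L(n,q)<qc(n,L)$ for $q=\theta n^2$ with $\frac{d-2}{5d^2}<\theta<\frac{d-1}{2d^2}$. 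That window is nonempty for every $d$, so $q_L(n)=O(n^2)$ follows. It cannot reach $q\ge A_dn^2$ for large $A_d$, however. For that clause you need the paper's random construction together with the exact two-edge count.
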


For each $4\le d\le 25$, both $F_{2,3,d}$ and $F_{2,5,d}$ are
connected and have chromatic number $d+1$.
Their expansions are connected $3$-graphs.
The two theorems imply
\[
\frac{q_{\K_3(F_{2,3,d})}(n)}
     {q_{\K_3(F_{2,5,d})}(n)}
=
\Omega(n^{1/2}).
\]
Thus the uniformity and the chromatic number of the original
graph do not determine a common threshold order.

We use the following notation.
For a positive integer $m$, write $[m]=\{1,\ldots,m\}$.
For a finite set $X$, let $\binom{X}{r}$ be the family of its
$r$-element subsets.
For a graph or hypergraph $H$, write $V(H)$ and $E(H)$ for its
vertex set and edge set.
Set $|H|=|V(H)|$ and $e(H)=|E(H)|$.
For $U\subseteq V(H)$, let $H[U]$ be the subgraph induced by $U$,
and write $H-U$ for the deletion of the vertices in $U$.
For $B\subseteq E(H)$, write $H-B$ for the deletion of the edges
in $B$. Let $N_L(H)$ denote the number of unlabelled copies of $L$ in $H$.
These copies need not be induced.
 Most counts first use injective maps from a labelled copy of $L$.
Dividing by $|\operatorname{Aut}(L)|$ gives the unlabelled count,
where $\operatorname{Aut}(L)$ is the automorphism group of $L$.

For pairwise disjoint vertex sets $U_1,\ldots,U_d$, let
$K=K^{(k)}(U_1,\ldots,U_d)$ be the complete $d$-partite $k$-graph
with vertex set $U_1\cup\cdots\cup U_d$.
Its edges are all $k$-sets that meet each $U_i$ in at most one
vertex.
Such $k$-sets are called \emph{crossing} with respect to this partition.

We use the standard asymptotic notation.
Suppose that $f(x)>0$ for all sufficiently large $x$.
Denote $g(x)=O(f(x))$ if $|g(x)|\le Cf(x)$ for some constant
$C>0$ and all sufficiently large $x$.
Denote $g(x)=\Omega(f(x))$ if $g(x)\ge cf(x)$ for some constant
$c>0$ and all sufficiently large $x$.
Denote $g(x)=\Theta(f(x))$ if
$cf(x)\le g(x)\le Cf(x)$ for some constants $c,C>0$ and all
sufficiently large $x$.
Finally, $g(x)=o(f(x))$ means that $g(x)/f(x)\to0$ as $x\to\infty$.
Unless stated otherwise, all asymptotic statements are as
$n\to\infty$, with $L$, $k$, and $d$ fixed.
We write $a\ll b$ to mean that $a>0$ is chosen sufficiently small
than $b$ and the fixed parameters.

Section~\ref{s2} develops the tools for the main proofs:
Subsection~\ref{s2.1} gives the extremal and counting results and defines
$\zeta_L$.
Subsection~\ref{s2.2} establishes stability and local structural estimates.
Subsection~\ref{s2.3} treats balancing and gives an unbalanced construction.
Section~\ref{s3} proves Theorems~\ref{thm-tlnq1} and~\ref{thm-ht}.
Section~\ref{s4} proves  Theorem~\ref{thm-anchor-upper},  followed by the examples in 
Corollary~\ref{cor-multipartite-threshold}.
Section~\ref{s5} combines counting through a fixed vertex with the
stationary link condition to prove Theorem~\ref{thm-stationary-link}. Section~\ref{s6} gives two precise graph groups, each of which satisfies two sufficient conditions.
The appendix proves the extremal theorem stated in Theorem~\ref{extremal-graph}.

\section{Preliminary Results}\label{s2}
This section collects the tools needed for the main proofs.
First, we count copies in complete multipartite hypergraphs with
added edges. Second, 
we  establish the structural and balancing estimates.

\subsection{Extremal and   counting}\label{s2.1}
We first state the extremal theorem and count copies with prescribed
non-crossing edges.
We then study how these counts change with the part sizes and
define $\zeta_L$.

Let $T^{(k)}(n,d)$ denote the complete $d$-partite $k$-graph on $n$ vertices, where each partition class has size $\lfloor n/d\rfloor$ or $\lceil n/d\rceil$, and each edge intersects every partition class in at most one vertex. Let $t^{(k)}(n,d):=e(T^{(k)}(n,d))$.
	The following theorem follows from a result of Alon and  Pikhurko     (personal communication, 2007). For completeness, we include their proof in Section \ref{A}.	\begin{thm}[Alon, Pikhurko~\cite{AlonPikhurko}]\label{extremal-graph}
		Let $d > k \ge 3$ be integers, and let $L\in \mathcal{F}(k,d)$. For all sufficiently large \(n\),  $T^{(k)}(n,d)$ is the unique $n$-vertex extremal $k$-graph for $L$.
	\end{thm}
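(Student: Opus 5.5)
We prove Theorem~\ref{extremal-graph} by the standard stability-and-cleaning method for expanded color-critical graphs.

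\emph{Step 1: the lower bound and the Lagrangian.} First, $T^{(k)}(n,d)$ is $L$-free. Suppose a copy of $L=\mathcal K_k(F)$ lies in a complete $d$-partite $k$-graph. Each expanded edge $e\cup S_e$ is crossing, so the two core vertices of $e$ lie in distinct parts. Hence the $d$-coloring by parts restricted to $V(F)$ would properly color $F$, contradicting $\chi(F)=d+1$. So $\mathrm{ex}(n,L)\ge t^{(k)}(n,d)$.

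For the upper bound, pick a critical edge $uv$ with $\chi(F-uv)=d$. Let $K_{d+1}^{(k)}$-type blowups be handled through Lagrangians. The Lagrangian of $\mathcal K_k(K_{d+1})$-free $k$-graphs is at most that of $K_d^{(k)}$ when $d\ge k$; this is the Frankl--Füredi/Sidorenko result on expanded cliques, which is why we need $d>k$. Combining this with supersaturation and hypergraph removal (or Sidorenko's blow-up argument) gives $\mathrm{ex}(n,L)=(1+o(1))t^{(k)}(n,d)$, since $L$ lies in a blowup of $\mathcal K_k(K_{d+1})$.

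\emph{Step 2: stability.} Next we prove that every $L$-free $H$ with $e(H)\ge t^{(k)}(n,d)-o(n^k)$ can be made $d$-partite by deleting $o(n^k)$ edges. We would invoke the Pikhurko/Mubayi stability for expanded cliques, $\mathcal K_k(K_{d+1})$, and transfer it to $L$ via removal: $H$ has $o(n^{v})$ copies of the $\mathcal K_k(K_{d+1})$-blowup, so deleting $o(n^k)$ edges yields a $\mathcal K_k(K_{d+1})$-free graph of almost extremal size.

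\emph{Step 3: exact result by cleaning.} Take an extremal $H$. By the usual deletion argument, we may assume minimum degree at least $(1-o(1))\binom{n}{k-1}(d-1)\cdots/d^{k-1}$; otherwise delete a low-degree vertex and induct, which gains edges over Turán and contradicts stability for large $n$. Choose a partition $V_1,\dots,V_d$ maximizing the number of crossing edges. By stability and min-degree, every vertex has only $o(n^{k-1})$ non-crossing edges through it in its own part, and each part has size $(1/d+o(1))n$.

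Now suppose some non-crossing edge $e$ exists, say with two vertices $x,y\in V_1$. Then we embed $L$. Map the critical edge $uv$ of $F$ onto $x,y$. Color $F-uv$ with $d$ colors, with $u,v$ both in class $1$. Embed the remaining core vertices greedily into the common "typical" neighbourhoods inside the parts. Embed each expansion set $S_e$ into distinct parts avoiding the used ones, which is possible since $d\ge k$ leaves room for the $k-2$ extra vertices of each crossing edge. Finally use the rest of $e$ as the expansion of $uv$. The high link degrees of $x$ and $y$ in the crossing structure, together with the sparsity of missing crossing edges, make this greedy embedding succeed. The contradiction shows that $H$ is $d$-partite, hence $e(H)\le t^{(k)}(n,d)$, with equality only for balanced parts. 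This gives uniqueness.

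The main obstacle is this embedding step. We must guarantee that $x$ and $y$ have linear common crossing neighbourhoods in all other parts, and in particular handle edges $e$ that meet a part in more than two vertices. I would treat this by first showing that every vertex has crossing link missing only $o(n^{k-1})$ edges, via a switching argument if a vertex is "bad". Then I would count candidate embeddings and subtract the $o(\cdot)$ failures.
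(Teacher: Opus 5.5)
Your outer framework (lower bound, then asymptotics and stability for $\mathcal K_k(K_{d+1})$ transferred to $L$ by removal, then exact cleaning) is a legitimate alternative route. However, the cleaning step has a genuine gap, exactly at the ``main obstacle'' you flag. The assertion that ``by stability and min-degree, every vertex has only $o(n^{k-1})$ non-crossing edges through it in its own part'' does not follow from those two facts. Stability only bounds $|B|$ globally, and neither it nor the degree condition prevents a single vertex from lying in $\Theta(n^{k-1})$ bad edges while keeping full degree. Your final greedy embedding, however, needs vertex-level control: $x$, $y$ and every core vertex you use must miss few crossing edges. A minimum degree of $\delta(T^{(k)}(n,d))-\epsilon n^{k-1}$ only gives $d_M(x)\le d_B(x)+\epsilon n^{k-1}+o(n^{k-1})$, so you must bound $d_B(x)$ for \emph{every} vertex. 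A switching argument alone cannot do this: moving $x$ to $V_j$ helps only once you know that few edges at $x$ meet $V_j$, which is the very thing to be proved.

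The missing idea is a vertex-level use of $L$-freeness, in two parts.

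(i) Pick $w\in V(F)$ with $\chi(F-w)=d$, for example an endpoint of the critical edge. In every proper $d$-coloring of $F-w$, the neighbours of $w$ meet all $d$ classes. Suppose $x\in V_i$ lies in $\gamma n^{k-1}$ edges containing another vertex of $V_i$. Local optimality of the partition then yields $\gamma n^{k-1}$ edges at $x$ meeting each $V_j$. Hence, for every $j$, there are linearly many typical $z\in V_j$ with $d_H(xz)=\Omega(n^{k-2})$, and $L$ embeds with $w\mapsto x$.

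(ii) Consider a bad edge through $x$ whose repeated pair avoids $x$. If both vertices of that pair are typical, your own final embedding excludes it, with $x$ simply becoming an expansion vertex of $uv$. So such edges number at most $|W|n^{k-2}$, where $W$ is the small set of atypical vertices.

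Only after (i) and (ii) does min-degree give $d_M(x)\le 2\gamma n^{k-1}$ for all $x$, with the stability, typicality and local thresholds chosen in a suitable hierarchy. After that, your embedding from an arbitrary bad edge works. Your concern about $e$ meeting a part in three or more vertices is moot: the other $k-2$ vertices of $e$ are just the expansion vertices of $uv$ and carry no part constraint. Note also that maximizing the number of crossing edges compares only crossing link sets in (i). The paper instead maximizes $\sum_i ih_i$, which compares all link sets meeting $V_i$ with those meeting $V_j$.

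The paper avoids the min-degree reduction altogether:
\begin{enumerate}
\item Extremality gives $|M|\le|B|$.
\item An embedding from a bad pair shows that every bad pair meets the set $A$ of vertices lying in at least $c_1n$ sparse pairs.
\item Hence $|B|\ge|M|\ge c_2|A|n^{k-1}$, and some $x$ lies in $\Omega(n^{k-1})$ bad edges inside its own part.
\item The embedding in (i) then forces a part in which $x$ has few dense partners, and moving $x$ there increases $\sum_i ih_i$.
\end{enumerate}
Both routes therefore hinge on the vertex-level embedding that your proposal leaves out.
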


 

Fix $K=K^{(k)}(U_1,\ldots,U_d)$ with $u_i=|U_i|$ for each $i\in[d]$, and let $\boldsymbol u=(u_1,\dots,u_d)$. For  a non-crossing edge $e\in{V(K)\choose k}$, define the  \emph{part-pattern} of $e$ with respect to
$(U_1,\ldots,U_d)$ by
\[\rho_U(e):=
\bigl(|e\cap U_1|,\ldots,|e\cap U_d|\bigr).
\]Observe that $\rho_U(e)$ is an ordered tuple, Permuting the  part labels permutes its coordinates.
Let
\[
\mathcal P
:=
\left\{
\boldsymbol a=(a_1,\ldots,a_d)\in\mathbb Z_{\ge0}^d:
\sum_{i=1}^d a_i=k,\ 
\max_{i\in[d]}a_i\ge2
\right\}.
\]
Thus $\mathcal P$ is precisely the set of possible part-patterns
of non-crossing $k$-sets. Since $k$ and $d$ are fixed,
$|\mathcal P|$ is a constant independent of $n$. 

For a fixed non-crossing
edge set $S$, let $C(\boldsymbol u,S;L)$ (or $C(u_1,\dots,u_d,S;L)$) count copies in
$K\cup S$ whose exact non-crossing edge set is $S$. In balanced parts $ \lfloor n/d \rfloor\le u_i\le \lceil n/d \rceil$, write $C(n,S;L)$ (or
$C(n,e;L)$ when $S=\{e\}$).
For a fixed ordered pattern $\boldsymbol a=(a_1,\ldots,a_d)\in\mathcal P$,
denote
\[
 C_{\boldsymbol a}(u_1,\ldots,u_d)
 :=C(u_1,\ldots,u_d,e;L),
 \qquad \rho_U(e)=\boldsymbol a.
\]
A pattern $\boldsymbol a$ is called \emph{simple} if it can be obtained by a permutation of  $(2,1,\dots,1,0,\dots,0)$.

\begin{thm}\label{thm-c}
		Let  $d>k\ge3$ be integers, and let	 \( L \in \mathcal{F}(k,d) \)  with $l$ vertices. 
	For  every  sufficiently large $n$ and every ordered pattern $\boldsymbol a=(a_1,\ldots,a_d)\in\mathcal P$, there exists a constant \( \alpha_{L} > 0 \) with 		 
		\[
C_{\boldsymbol a}(n,L)=\alpha_L\sum_i\binom{a_i}{2}n^{l-k}+O_L(n^{l-k-1}).
\]
	Thus $c(n,L)=C_{\boldsymbol a}(n,L)=\alpha_Ln^{l-k}+O_L(n^{l-k-1})$ with a simple pattern $\boldsymbol a$.
	\end{thm}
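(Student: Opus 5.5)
We count copies of $L=\mathcal K_k(F)$ in $K\cup\{e\}$, $K=K^{(k)}(U_1,\ldots,U_d)$ with balanced parts, whose unique non-crossing edge is $e$. Fix the labelled graph $F$ with $\chi(F)=d+1$ and its critical edges. Every copy of $L$ in $K\cup\{e\}$ must use $e$: $L$ is not $d$-partite-embeddable in $K$, because a crossing embedding of $\mathcal K_k(F)$ induces a proper $d$-colouring of $F$ (the two core vertices of any edge of $F$ lie in one crossing $k$-edge, hence in different parts). Write $e=\{x_1,\dots,x_k\}$. Say $e$ plays the role of the expanded edge $\hat f$ of $f=uv\in E(F)$, and the core vertices $u,v$ map to $x_i,x_j$. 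We need $\mathcal K_k(F-f)$ embedded crossingly, i.e.\ through a proper $d$-colouring of $F-f$ determined by the part assignment of the core vertices.

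\textbf{Step 1: leading term.} Embeddings with all remaining $l-k$ vertices placed freely contribute order $n^{l-k}$. For this to happen, $u$ and $v$ must receive the same colour, i.e.\ $x_i,x_j$ lie in a common part $U_p$. Otherwise $F-f$ plus the distinct colours of $u,v$ gives a proper $d$-colouring of $F$, which is impossible. So the relevant choices are exactly the pairs $\{x_i,x_j\}$ inside a common part, and there are $\sum_p\binom{a_p}{2}$ such unordered pairs.

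For a fixed such pair and a fixed orientation, the number of embeddings is obtained as follows. Sum over critical edges $f$ of $F$ and over the labelled $d$-colourings $\phi$ of $F-f$ with $\phi(u)=\phi(v)=p$. The remaining $k-2$ vertices of $\hat f$ map to $e\setminus\{x_i,x_j\}$ in $(k-2)!$ ways. For a labelled embedding to exist, the other vertices of $e$ need not satisfy any constraint, since $\hat f$'s extra vertices have no other edges. The other $l-k$ vertices go to parts given by $\phi$: core vertices to $U_{\phi(w)}$, and the expansion vertices of each edge $g\ne f$ to the parts not used by $g$'s core, chosen so that the edge is crossing. The count is $\prod(n/d)^{\,l-k}(1+O(1/n))$, times a combinatorial factor. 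This factor depends only on $F,k,d$, not on $p$ or on which pair was chosen: by symmetry of the parts, relabelling colours maps colourings with $\phi(u)=\phi(v)=p$ bijectively onto those with $p'$. Also, since $d>k$, the expansion vertices always have enough free parts, with count independent of $p$.

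Collisions between vertices, and the vertices lying on $e$ but not part of the image, cost a factor $1/n$. Summing over both orientations and dividing by $|\Aut(L)|$ gives $\alpha_L\sum_p\binom{a_p}{2}n^{l-k}+O(n^{l-k-1})$, with $\alpha_L>0$ because $F$ is critical, so some $\phi$ exists.

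\textbf{Step 2: error terms.} Embeddings where $u,v$ lie in different parts are impossible. Balanced parts of size $n/d+O(1)$ change counts only by $O(n^{l-k-1})$. The constant $\alpha_L$ has the form $\frac{2(k-2)!}{|\Aut(L)|}\,d^{-(l-k)}N_F$ up to the expansion-placement factor. A simple pattern has $\sum_p\binom{a_p}{2}=1$, and any other pattern gives a sum $\ge1$, so $c(n,L)$, the minimum over patterns, is attained at simple patterns.

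\textbf{Main obstacle.} The delicate point is verifying that the placement count for the $l-k$ free vertices is truly independent of the part $p$ and of the positions of the other vertices of $e$. I would handle it by explicitly counting labelled maps as a sum over colourings of the core, with injective assignments of expansion vertices to unused parts, and invoking the $S_d$ symmetry of the balanced partition.
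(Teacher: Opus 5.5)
Your proposal is correct and follows essentially the same route as the paper's proof. Every copy uses $e$, and exactly one (necessarily critical) edge $uv$ is sent to $e$ with $u,v$ in a common part, giving the factor $2\binom{a_p}{2}(k-2)!$. The remaining $l-k$ vertices are counted via proper $d$-colourings of $F-uv$, with expansion vertices placed in the $d-2$ other parts, and your $S_d$-symmetry remark makes explicit the independence of the leading constant from $p$ that the paper leaves implicit.

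One small slip: to conclude that $c(n,L)$ is attained at a simple pattern, you need every non-simple pattern to have $\sum_p\binom{a_p}{2}\ge 2$, not merely $\ge 1$. This is immediate, since either some $a_p\ge 3$ or two coordinates are $\ge 2$.
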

	\begin{proof}
		Let $L=\mathcal{K}^{(k)}(F)$, where \( F \) is  color-critical graph  with $f$ vertices. Let $l:=|V(L)|$.  We say $V(F)$  the \emph{original vertex set} of $L$.   
		Let the \( d \) parts of \( T^{(k)}(n,d) \) be \( V_1, \dots, V_d \) with \( |V_1| \ge \cdots \ge |V_d|  \), and define \( n_i := |V_i| \) for \( i \in [d] \). Let \( H = T^{(k)}(n,d)\cup e \).
		
		By the definition of a color-critical graph, there exists at least one critical edge \( uv \) of $F$, i.e., the chromatic number of \( F - uv \) is \( d \). Let $L_{uv}$ be the $k$-edge obtained by expanding $uv$.
        Let \( \kappa \) denote the set of all critical edges of \( F \). Take any \( uv \in \kappa \), and let \( C_{uv} \) denote all partitions of \(V(F)\setminus \{u,v\}\) into \( d \) disjoint independent sets \( \{U_1, \dots, U_{d}\} \) such that \( U_1\cup\cdots\cup U_{d}=V(F)\setminus \{u,v\} \).
		
		Take any \( \{U_1, \dots, U_{d}\} \in C_{uv} \), and   define a binary function \( f: [d] \times [d] \to \{0,1\dots,e(F)\} \) such that for any \( \{i,j\} \in [d] \times [d] \), \( f(i,j) = e(F[U_i, U_j]|) \), which is the number of edges in \( F \) that intersect both \( U_i \) and \( U_j \). Thus, \( f(i,i) = 0 \) for any $i\in[d]$ and \[\sum_{1 \le i < j \le d} f(i,j) = e(F) - d_F(u) - d_F(v)+1.\] 
		For vertex $u$,	define a function \( f_u: [d] \to \{0,1\dots,e(F)\} \)  such that for any \( j \in [d] \), \( f_u(j) = e(F[u, U_j]) \), i.e., the number of edges in \( F \) between \( u \) and \( U_j \). Similarly for $v$, define a function \( f_v: [d] \to \{0,1\dots,e(F)\} \)  such that for any \( j \in [d] \), \( f_v(j) = e(F[v, U_j])\). Thus, 
		\[ \sum_{j \in [d]} f_u(j) = d_F(u) -1, \quad 
		\sum_{j \in [d]} f_v(j) = d_F(v)-1. \]
		
		Comparing \( H \), \( L \) and \( F \), a copy of \( L \) in \( H \) is  an edge-preserving mapping of \( L \).
Consider an injective embedding of $L$ into
$T^{(k)}(n,d)\cup\{e\}$.
Color each original vertex by the part containing its image.
Since $\chi(F)=d+1$, some original edge has endpoints of the same color. Its expanded edge is therefore mapped to $e$.
Every other expanded edge is crossing.
Thus there is exactly one original edge $uv$ whose endpoints have
the same color.
It follows that $uv\in\kappa$.
The images of $u$ and $v$ lie in one part, 
say \( V_i \) without loss of generality. 
There are $2{a_i\choose 2}$ ways to map the labelled vertices $u,v$
to distinct vertices of $e\cap V_i$.
There are $(k-2)!$ ways to map the other vertices of $L_{uv}$
to the remaining vertices of $e$.     
        We divide the mappings of the remaining edges and vertices into three types:
		(1) The vertices of \( F \). For   \( \{U_1, \dots, U_{d}\} \in C_{uv} \), the number of choices for \( U_1, \dots, U_{d} \) in \( H \) is 
		\[
		(n_1 - a_1)_{|U_1|} \cdots (n_d - a_d)_{|U_d|}.
		\]
		(2) The vertices corresponding to the extended edges incident to \( u \) (or \( v \)). For any $ j\ne i$, take an edge \( uu' \) with \( u' \in U_j \), then  \( uu'\) remains an edge in \( L \) after extension.
		Since  each edge in \( H - e \) intersects at most one element of \( V_r \) for each $r\in[d]$,   the \( k-2 \) vertices extended from \( uu' \) are mapped into \( k-2 \) vertex sets of \(\{ V_1, \dots, V_d\}\setminus\{V_i, V_j\} \). The number of choices for these \( k-2 \) vertices is at least
		\[
		\sum_{\{i_1, \dots, i_{k-2}\} \in \binom{[d] \setminus \{i,j\}}{k-2}} (n_{i_1}-f) \cdots( n_{i_{k-2}}-f).
		\]
		Note that the number of choices for such edge $uu'$  is $f_u(j)$ (or $f_v(j)$).
		(3) The vertices corresponding to the extended edges between \( U_{i'} \) and \( U_{j'} \). For any $1\le i'<j'\le d$, take an edge \( u_1u_2 \) with \( u_1 \in U_{i'} \) and \( u_2 \in U_{j'} \). Also, \( u_1u_2 \) remains an edge in \( L \) after extension. Thus, the \( k-2 \) vertices extended from \( u_1u_2 \) are mapped into \( k-2 \) vertex sets of \(\{ V_1, \dots, V_d\}\setminus\{V_{i'}, V_{j'}\} \) in \( H \). The number of choices for these \( k-2 \) vertices is at least
		\[
		\sum_{\{i_1, \dots, i_{k-2}\} \in \binom{[d] \setminus \{i',j'\}}{k-2}} (n_{i_1}-l) \cdots( n_{i_{k-2}}-l).
		\]Note that the number of choices for such edge $u_1u_2$  is $f(i'j')$.
		
		Let \( |\text{Aut}(F) |\) denote the number of automorphisms of \( F \), and define
		\[
		\varGamma (i) =\prod_{j\in[d]} \left( \sum_{\{i_1, \dots, i_{k-2}\} \in \binom{[d] \setminus \{i,j\}}{k-2}} n_{i_1} \cdots n_{i_{k-2}} \right) ^{f_u(j) + f_v(j)} 
		,
		\]
		and 
		\[\varGamma=\prod_{1 \le i' < j' \le d} \left( \sum_{\{i_1, \dots, i_{k-2}\} \in \binom{[d] \setminus \{i',j'\}}{k-2}} n_{i_1}\cdots n_{i_{k-2}}\right) ^{f(i',j')}.\]
		Then summing over all $uv \in \kappa$ and all $C_{uv}$ yields 
		\[
		\N_H(L) \ge (1-o(1))\left\lbrace \frac{(k-2)!}{|\text{Aut}(F)|} \sum_{i \in [d]} 2\binom{a_i}{2} \sum_{uv \in \kappa} \sum_{C_{uv}}  \left( \prod_{r \in [d]} (n_r - a_r)_{|U_r|} \right) \varGamma (i)\varGamma \right\rbrace ,
		\]
		and \[
		\N_H(L) \le\left\lbrace \frac{(k-2)!}{|\text{Aut}(F)|} \sum_{i \in [d]} 2\binom{a_i}{2} \sum_{uv \in \kappa} \sum_{C_{uv}}  \left( \prod_{r \in [d]} (n_r - a_r)_{|U_r|} \right) \varGamma (i)\varGamma \right\rbrace .
		\]	
		The equality $
		\min\sum_{i \in [d]} \binom{a_i}{2}
		$
		is achieved when \(a_i = 2\) for the part \(n_i\) and \(a_j = 0\) or \(1\) for all other \(j\).
		Under this choice,  the value of
		\[
		\sum_{i \in [d]} \binom{a_i}{2} \sum_{uv \in \kappa} \sum_{C_{uv}}
		\left( \prod_{r \in [d]} (n_r - a_r)_{|U_r|} \right) \varGamma(i) \varGamma
		\]
		is minimized. Therefore,
		\[
		c(n,L) = \Theta\!\left(n^{f - k + (k-2)\left( \sum_{j \in [d]} (f_u(j) + f_v(j))
			+ \sum_{1 \le i < j \le d} f(i,j) \right)}\right) = \Theta(n^{l - k}).
		\]	
		This completes the proof.	
	\end{proof}

	\begin{lem}\label{lem-cs}
		Let integers $l>k\ge3$, and   $L\in \mathcal{F}(k,d)$ with $l$ vertices. 
           For  every  sufficiently large $n$ and every   pattern $\boldsymbol a=(a_1,\ldots,a_d)\in\mathcal P$, there are constants $\zeta_{L}, \zeta'_L$ such that the following holds. Suppose  $C_{\boldsymbol a}(n,L)=C(n_1',\cdots,n_d';L)$ and  $n_1+\cdots+ n_d=n$. Let $A_i=n_i-n_i'$ and $A=\max_{i\in[d]}\{|n_i-n_i'|\}$,
		then 
		\[|C_{\boldsymbol a}(n_1,\cdots,n_d;L)-C_{\boldsymbol a}(n,L)-\sum_{i\in[d]}{a_i\choose 2}\zeta_{L}A_1n^{l-k-1}|\le \zeta'_LA^2n^{l-k-2} .\]	
	\end{lem}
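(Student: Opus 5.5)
The plan is to use the explicit formula from the proof of Theorem~\ref{thm-c}, which expresses $C_{\boldsymbol a}(n_1,\ldots,n_d;L)$ exactly as a polynomial in $n_1,\ldots,n_d$, and then Taylor-expand it around the balanced vector $\boldsymbol n'=(n_1',\ldots,n_d')$. I read the linear term in the statement as $\sum_{i}\binom{a_i}{2}\zeta_L A_i n^{l-k-1}$, with $A_i$ in place of $A_1$. The argument below produces exactly this form.

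\textbf{Step 1 (polynomial structure).} Redo the count in the proof of Theorem~\ref{thm-c} exactly rather than as an estimate: the choices for the expansion vertices $(n_{i_1}-\cdot)\cdots$ and for the original vertices $(n_r-a_r)_{|U_r|}$ are exact falling-factorial products. This gives
\[
C_{\boldsymbol a}(n_1,\ldots,n_d;L)=\sum_{i\in[d]}\binom{a_i}{2}\,P_i(n_1,\ldots,n_d)+R_{\boldsymbol a}(n_1,\ldots,n_d).
\]
Here $P_i$ is a homogeneous polynomial of degree $l-k$ that does not depend on $\boldsymbol a$, is obtained from $P_1$ by swapping the roles of parts $1$ and $i$, and is symmetric in the variables $\{n_j:j\ne i\}$. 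The remainder $R_{\boldsymbol a}$ has degree at most $l-k-1$, with coefficients bounded in terms of $L$ (there are finitely many patterns).

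\textbf{Step 2 (gradient at the balanced point).} By homogeneity and the symmetry of $P_i$, at the point $(n/d,\ldots,n/d)$ we have
\[
\partial_i P_i=\beta n^{l-k-1},\qquad \partial_j P_i=\gamma n^{l-k-1}\quad (j\ne i),
\]
where $\beta,\gamma$ depend only on $L$. Since $\sum_j n_j=\sum_j n_j'=n$, we get $\sum_j A_j=0$. Hence the first-order change of $P_i$ is $(\beta-\gamma)A_i n^{l-k-1}$, and we set $\zeta_L:=\beta-\gamma$. Moving the expansion point from $(n/d,\ldots,n/d)$ to the actual balanced $\boldsymbol n'$ (which differs by at most $1$ in each coordinate) changes the gradient by only $O(n^{l-k-2})$.

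\textbf{Step 3 (error terms).} The Hessian of $P_i$ has entries $O(n^{l-k-2})$ uniformly on the segment from $\boldsymbol n'$ to $\boldsymbol n$, since $A\le n$ and all coordinates are $O(n)$. So the second-order Taylor remainder is $O(A^2n^{l-k-2})$. The remaining contributions are the change in $R_{\boldsymbol a}$, bounded by $O(A\,n^{l-k-2})$ via its gradient bound, and the gradient correction from Step 2, also $O(A n^{l-k-2})$. Because the $A_i$ are integers, either $A=0$, in which case there is nothing to prove, or $A\ge1$ and $A\le A^2$. Therefore all errors are at most $\zeta'_L A^2 n^{l-k-2}$ for a suitable constant $\zeta'_L$. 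Finally, $C_{\boldsymbol a}(n,L)=C_{\boldsymbol a}(n_1',\ldots,n_d';L)$ by definition, which gives the claimed inequality.

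The main obstacle is Step 1: verifying that the $\boldsymbol a$-dependence really factors as $\binom{a_i}{2}P_i$ at top order, with the \emph{same} polynomial $P_i$ for every pattern. The dependence on $a_r$ through the falling factorials $(n_r-a_r)_{|U_r|}$ must be pushed into the lower-order $R_{\boldsymbol a}$. The transfer of the pair to part $i$ must also be handled by relabelling, so that $\zeta_L$ is a single constant independent of $\boldsymbol a$ and $i$.
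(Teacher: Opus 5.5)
Your proposal is correct and follows essentially the same route as the paper. In both, the exact count is a polynomial whose top-degree part is $\sum_i\binom{a_i}{2}R_i$ with $R_i$ symmetric in the variables $n_j$ ($j\ne i$); one then Taylor-expands at the balanced vector $(n_1',\ldots,n_d')$, moves the gradient to $(n/d,\ldots,n/d)$ at cost $O(n^{l-k-2})$, and uses $\sum_j A_j=0$ to obtain $\zeta_L=(\partial_1R_1-\partial_2R_1)(1/d,\ldots,1/d)$.

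Your reading of $A_1$ as $A_i$ is the right one. The paper's argument effectively differentiates only $R_1$, which is valid only when the doubled part is part $1$. For a pattern whose repeated part is another index, or that has several repeated parts, the literal $A_1$ version fails whenever $\zeta_L\neq0$.
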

	\begin{proof}
		By Theorem \ref{thm-c}, we see that $C_{\boldsymbol a}(n,L)$ is a polynomial in $n$ of degree $l-k$. 	
Let $R_1$ be the homogeneous
part of degree $l-k$ of the polynomial for the simple pattern with two vertices in part $1$, one
vertex in each of parts $2,\dots,k-1$, and no vertices in the other parts. Then   
		by Theorem \ref{thm-c}, we observe that 
\[R_1= \frac{2(k-2)!}{|\text{Aut}(F)|}\sum_{uv \in \kappa} \sum_{C_{uv}}
		 \prod_{r \in [d]} n_r^{|U_r|}  \varGamma(1) \varGamma
		, \quad C_{\boldsymbol a}(n,L)=\sum_{i\in[d]}{a_i\choose 2}R_1+O_L(n^{l-k-1}).\]
And \(R_1\) is symmetric with respect to the variables \(n_2,\dots,n_{d}\). At $(1/d,\ldots,1/d)$, this imply that 
$\frac{\partial R_1}{\partial n_j}\left(1/d,\ldots,1/d\right) =\frac{\partial R_1}{\partial n_i}\left(1/d,\ldots,1/d\right)$ for each $j,i\neq 1$.
Let \begin{align}\zeta_L=\left(\frac{\partial R_1}{\partial n_1} -\frac{\partial R_1}{\partial n_i}\right)\left(\frac1d,\ldots,\frac1d\right).\end{align}

		If $A=0$ then the result is trivial. Now we assume  $A\ge 1$.  Taking the first-order Taylor expansion about $(n_1',\cdots,n_d')$, there is \begin{align}\label{eq-cnl}	C_{\boldsymbol a}(n_1'+A_1,\ldots,n_d'+A_d;L)-&C_{\boldsymbol a}(n_1',\ldots,n_d';L)\notag\\
		= &\sum_{i\in[d]}{a_i\choose 2}\sum_{j\in[d]} A_j \frac{\partial C_{\boldsymbol a}}{\partial n_i}(n_1',\ldots,n_d')
		+ O(A^2 n^{l-k-2}).\end{align}	
		In addition, since  $\frac{\partial C_{\boldsymbol a}}{\partial n_i}(n_1',\ldots,n_d')$ is a polynomial in $n$ of degree $l-k-1$ and    $|n_i'-n/d|\le 1$ for all $i\in[d]$.  By the mean value theorem, for each $i\in[d]$, we have \[\left| \frac{\partial C_{\boldsymbol a}}{\partial n_i}(n_1',\ldots,n_d')
		-
		\frac{\partial C_{\boldsymbol a}}{\partial n_i}\!\left(\frac{n}{d},\ldots,\frac{n}{d}\right)\right| 
		= O(A n^{l-k-2}).\]
Since $\sum_{j=1}^d A_j=0$, we obtain
		\begin{align*}\sum_{i\in[d]} A_i\frac{\partial C_{\boldsymbol a}}{\partial n_i}\!\left(\frac{n}{d},\ldots,\frac{n}{d}\right)=&A_1 
		\frac{\partial C_{\boldsymbol a}}{\partial n_1}\left(\frac{n}{d},\ldots,\frac{n}{d}\right)
		+
		\sum_{i\ge 2}A_i\frac{\partial C_{\boldsymbol a}}{\partial n_2}\left(\frac{n}{d},\ldots,\frac{n}{d}\right)\\
		=&A_1 
		\frac{\partial R_1}{\partial n_1}\left(\frac{n}{d},\ldots,\frac{n}{d}\right)
		+
		\sum_{i\ge 2}A_i\frac{\partial R_1}{\partial n_2}\left(\frac{n}{d},\ldots,\frac{n}{d}\right)+O_L(n^{l-k-2})\\
        =&A_1\left(\frac{\partial R_1}{\partial n_1}-
		\frac{\partial R_1}{\partial n_2}
		\right)\!\left(\frac{n}{d},\ldots,\frac{n}{d}\right)+O_L(n^{l-k-2})
		.\end{align*}
Finally, replace the sum $ \sum_{i\in[d]}A_i\frac{\partial C_{\boldsymbol a}}{\partial n_i}(n_1',\ldots,n_d')$
		by above  equality in \eqref{eq-cnl}   yields the desired lemma.
	\end{proof}

	By a   similar proof to  Theorem \ref{thm-c} and Lemma \ref{lem-cs} , we establish the following  Theorems.
	
	\begin{thm}\label{thm-sl}
		Let  $d>k\ge3$ be integers, and let  $L \in \mathcal{F}(k,d)$  with $l$ vertices. For a non-crossing edge set $S$, if $c(n, S; L) \neq 0,$ then for all   sufficiently large $n$, 
		$
		c(n, S; L) =\Theta(n^{l-|V(S)|})
		$.	
	\end{thm}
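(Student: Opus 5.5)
We prove Theorem~\ref{thm-sl} by following the embedding count from the proof of Theorem~\ref{thm-c}, now with several prescribed non-crossing edges. Let $L=\mathcal K_k(F)$ with $f=|V(F)|$, and let $H=T^{(k)}(n,d)\cup S$. Fix a labelled injective embedding $\phi$ of $L$ into $H$ whose set of edges mapped onto non-crossing edges is exactly $S$.

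The first step is to record the \emph{type} of $\phi$. This consists of three pieces of data:
\begin{enumerate}[label=(\roman*)]
\item the bijection between $S$ and a set $E_S\subseteq E(F)$ of original edges whose expansions are mapped onto $S$;
\item the injective map of all vertices of these expanded edges onto $V(S)$;
\item the part $V_j$ containing the image of each remaining original vertex, and, for each remaining expanded edge, the parts containing the images of its $k-2$ new vertices.
\end{enumerate}
Since $L$, $k$, $d$ and $S$ have bounded size, the number of types is $O_L(1)$. The number of types cannot exceed some constant, and every type occurs with at most the trivial number of completions.

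The second step is to count the embeddings of a fixed type $\tau$. Every original edge outside $E_S$ must be crossing, so the coloring from (iii) must be proper on $F-E_S$. Each new vertex of a crossing expanded edge must lie in a part distinct from the other parts used by that edge. Once $\tau$ is fixed, the vertices of $V(S)$ are already placed. The other $l-|V(S)|$ vertices of $L$ are each assigned a prescribed part of size about $n/d$. They must avoid $V(S)$ and one another, which excludes only $O(1)$ vertices. Every resulting injective assignment is a valid embedding: crossing edges of $T^{(k)}(n,d)$ are all present, and no edge outside $S$ is forced to be non-crossing. The exactness requirement, that the non-crossing edges used be exactly $S$, is guaranteed because each edge of $S$ is covered by (i). Hence each feasible type contributes $(1+o(1))\prod_j (n/d)^{m_j}=\Theta(n^{l-|V(S)|})$ embeddings, where $m_j$ is the number of unplaced vertices assigned to $V_j$.

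Summing over the $O_L(1)$ types gives the upper bound $c(n,S;L)=O(n^{l-|V(S)|})$. Dividing by $|\Aut(L)|$ does not change the order. For the lower bound we use the hypothesis $c(n,S;L)\neq0$: some copy exists, so its type $\tau_0$ is feasible. The main obstacle is ensuring that this type stays feasible for all large $n$, since the hypothesis is stated for the given $n$. Feasibility of a type is a purely combinatorial condition on $F$, $S$ and the part-patterns of the edges of $S$, and these do not depend on $n$ once $n\ge l$. Part sizes only need to be at least $l$, so the type remains realizable. The contribution of $\tau_0$ alone is therefore $\Omega(n^{l-|V(S)|})$, which completes the count. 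A proof in the polynomial form of Lemma~\ref{lem-cs} could be obtained in the same way, but it is not needed here.
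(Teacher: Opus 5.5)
Your proposal is correct and follows essentially the route the paper intends. The paper gives no separate proof and only says the result follows as in the proof of Theorem~\ref{thm-c}. Your classification of embeddings by type (which original edges are sent onto $S$, how their vertices map bijectively onto $V(S)$, and the part of every remaining vertex) is exactly that critical-edge and color-class count, extended to several prescribed edges. Each feasible type contributes a product of falling factorials in the part sizes of total degree $l-|V(S)|$, and there are $O_L(1)$ types. Since feasibility of a type does not depend on $n$, this gives both bounds with constants depending only on $L$.
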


	\begin{lem}\label{lem-Cs}
		Let  $l>k\ge3$  be integers, and let	$L\in \mathcal{F}(k,d)$.  Suppose  $C(n,S;L)=C(n_1',\cdots,n_d';S;L)$. Let    $s=\max_{i\in[d]}\{|n_i-n_i'|\}$ with $n_1+\cdots+ n_d=n$,
		then for  all sufficiently large $n$,
		\[|C(n_1,\cdots,n_d;S;L)-C(n,S;L)|=O_{L}(sn^{l-|V(S)|-1}).\]	
	\end{lem}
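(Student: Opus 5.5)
The approach is to treat $C(n_1,\dots,n_d;S;L)$ as a polynomial in the part sizes and compare it with its value at the balanced point by a first-order mean value estimate, as in the proof of Lemma~\ref{lem-cs}.

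\emph{Polynomial structure.} Redo the embedding count from the proof of Theorem~\ref{thm-c}, now for the fixed non-crossing set $S$ instead of a single edge $e$. Every copy of $L$ in $K\cup S$ whose exact non-crossing edge set is $S$ is determined by three pieces of data:
\begin{itemize}
\item a labelled correspondence between $|S|$ expanded edges of $L$ and the edges of $S$;
\item a choice of parts for the remaining original vertices and for the expansion vertices, consistent with crossing;
\item the images of the remaining vertices, chosen from the parts minus the vertices of $V(S)$ already used there.
\end{itemize}
Grouping by the combinatorial type (which of $S$, which parts), the count is a finite sum, with coefficients depending only on $L$ and $S$, of products of falling factorials $(n_r-b_r)_{m_r}$ and of linear factors $n_{i}-c$. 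Here $\sum_r m_r$ plus the number of expansion vertices placed outside $V(S)$ equals $l-|V(S)|$. Consequently $P(x_1,\dots,x_d):=C(x_1,\dots,x_d;S;L)$ is a polynomial of total degree at most $l-|V(S)|$, and its coefficients are bounded by constants depending only on $L$, since $|S|\le e(L)$ and there are finitely many patterns.

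\emph{Gradient bound.} Every monomial of degree $m\le l-|V(S)|$ has partial derivatives of degree $m-1$ with bounded coefficients. At any point $\xi$ with all $|\xi_i|\le 2n$ this gives
\[
\Bigl|\frac{\partial P}{\partial x_i}(\xi)\Bigr|=O_L\bigl(n^{l-|V(S)|-1}\bigr).
\]

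\emph{Mean value step.} Apply the mean value theorem along the segment from $(n_1',\dots,n_d')$ to $(n_1,\dots,n_d)$:
\[
|P(n_1,\dots,n_d)-P(n_1',\dots,n_d')|\le \sum_{i}|n_i-n_i'|\,\max_{\xi}\Bigl|\frac{\partial P}{\partial x_i}(\xi)\Bigr|\le d\,s\cdot O_L\bigl(n^{l-|V(S)|-1}\bigr).
\]
Since $P(n_1',\dots,n_d')=C(n,S;L)$ by hypothesis, this is the claim. The one subtle point is that the segment must stay in the region $|\xi_i|\le 2n$ where the gradient bound holds. Both endpoints have coordinates in $[0,n]$, so the whole segment does, and the estimate holds regardless of whether $s$ is small. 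For $s=0$ the statement is trivial.

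\emph{Main obstacle.} The hardest step is making rigorous that the count is genuinely a polynomial. The subtraction terms $-b_r$ depend only on how many vertices of $V(S)$ lie in part $r$, which is fixed once $S$ and the partition are fixed. We must also ensure that no copy is double counted across types. The partition is fixed while $(n_1,\dots,n_d)$ varies, so $V(S)\cap U_r$ is constant, and the counts are exact for $n_r\ge |V(S)|+l$. This is how I would set it up, mirroring the upper and lower bounds in Theorem~\ref{thm-c}, which now coincide because the exact count is used.
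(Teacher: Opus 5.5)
Your proposal is correct and is essentially the paper's argument. The paper proves this lemma only by pointing to the proofs of Theorem~\ref{thm-c} and Lemma~\ref{lem-cs}: the exact count is a polynomial of degree $l-|V(S)|$ in the part sizes with coefficients bounded in terms of $L$, followed by a first-order mean-value/Taylor estimate around the balanced vector, which is what you do (your uniform gradient bound along the segment is slightly cleaner than the second-order expansion used in Lemma~\ref{lem-cs}). One small remark: the products $\prod_r (n_r-b_r)_{m_r}$ are already exact counts whenever $n_r\ge b_r=|V(S)\cap U_r|$, which is needed for $S$ to exist at all, so the restriction $n_r\ge |V(S)|+l$ is unnecessary and the polynomial identity holds at every admissible endpoint.
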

\subsection{Stability and structural
estimates}\label{s2.2}
We first state the removal lemma  and proof the stability lemma. These  give a partition with few
bad and missing edges.
We then give a local  estimate.
\begin{thm}[Hypergraph Removal Lemma \cite{12,23,27,30}]\label{thm-remove}
		Let $k \geq 2$ be fixed, and let $L$ be a $k$-uniform hypergraph on $l$ vertices.
		For every $\varepsilon > 0$, there exist $\delta> 0$ and $n_0 = n_0(\varepsilon, L)$ such that the following holds for all $n > n_0$.
		If an $n$-vertex $k$-uniform hypergraph $H$ contains at most $\delta n^l$ copies of $L$, then there exists at most $ \varepsilon n^k$ edges such that the graph obtained from $H$ by deleting those edges is $F$-free.
	\end{thm}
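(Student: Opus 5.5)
The Hypergraph Removal Lemma (Theorem~\ref{thm-remove}) is a cited result, so within this paper it needs no new argument; the plan below only indicates how I would derive it from the standard tools behind the cited references \cite{12,23,27,30}. (I read the $F$ in the conclusion as the typo it appears to be for $L$.) I would use the hypergraph regularity method. Given $\varepsilon>0$, first apply a strong hypergraph regularity lemma, in the R\"odl--Schacht or Gowers formulation, to $H$ with parameters depending on $\varepsilon$ and $L$. This produces a bounded-complexity partition of $\binom{V(H)}{j}$ for every $j<k$ into regular polyads, such that $H$ is regular with respect to all but a small fraction of the $k$-polyads.

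Next, clean $H$. Delete every edge that lies on an irregular polyad, every edge on a polyad where the relative density of $H$ is below a small threshold $\gamma$, and every edge that meets a low-density lower-level cell. Each class of deletions will cost at most $\varepsilon n^k/3$ edges, provided the regularity parameters are chosen much smaller than $\gamma\ll\varepsilon$. Call the cleaned hypergraph $H'$.

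Then argue by contradiction. Suppose $H'$ still contains a copy of $L$. Its $e(L)$ edges lie on polyads that are regular and have density at least $\gamma$. The hypergraph counting lemma for regular complexes then shows that the polyads supporting this copy host at least $c(\gamma,L)\,n^{l}$ copies of $L$ in $H'\subseteq H$. Setting $\delta<c(\gamma,L)$ contradicts the hypothesis that $H$ has at most $\delta n^l$ copies of $L$. Hence $H'$ is $L$-free, and we removed at most $\varepsilon n^k$ edges.

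The main obstacle is the counting lemma for $k\ge3$. The regularity must hold simultaneously at all levels $2,\dots,k$, and the lower-level densities may tend to zero relative to the higher-level error parameters, so the constants have to be arranged hierarchically. I would not reprove this but simply invoke it from \cite{12,23,27,30}, which is exactly how the paper uses Theorem~\ref{thm-remove}.
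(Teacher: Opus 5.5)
Your proposal matches the paper's treatment: the paper gives no proof of Theorem~\ref{thm-remove} and simply cites it, and you are right that the $F$ in the conclusion is a typo for $L$. Your outline (a strong hypergraph regularity lemma, then cleaning, then the counting lemma with hierarchically chosen constants) is the standard argument of the cited works, so nothing further is needed.
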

	Two $k$-graphs of the same order are \emph{$m$-close} if adding
or deleting at most $m$ edges makes them isomorphic.

	 \begin{thm}[ Pikhurko~\cite{Pikhurko} ]\label{thm-clique-stable}
		Let  \(d> k \ge 3\)  be integers. 
		For every \(\varepsilon > 0\), there exist \(\delta = \delta(k,d,\varepsilon) > 0\)  such that the following holds for all sufficiently large \(n\): if an \(n\)-vertex  \(k\)-graph \(G\) is \(\mathcal K^{(k)}(K_{d+1})\)-free and satisfies
		$e(G) \ge t^{(k)}(n,d)- \delta  \binom{n}{k}$,
		then \(G\) is \(\varepsilon \binom{n}{k}\)-close to the \(d\)-partite  $k$-graph \(T^{(k)}(n,d)\).
	\end{thm}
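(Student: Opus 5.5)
The plan is to derive Theorem~\ref{thm-clique-stable} from the stability theory of expanded cliques, and in particular from the symmetrization method of Pikhurko. Write $\mathcal K=\mathcal K^{(k)}(K_{d+1})$.

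The first step is a Lagrangian bound. Let $G$ be $\mathcal K$-free with $e(G)\ge t^{(k)}(n,d)-\delta\binom nk$. I would use Zykov-type symmetrization: repeatedly replace a vertex $x$ by a clone of a nonadjacent vertex $y$ when $\deg y\ge\deg x$. Here two vertices $x,y$ count as nonadjacent when no edge of $G$ contains both. Cloning never creates a copy of $\mathcal K$. The reason is that any copy of $\mathcal K$ using both $x$ and a clone $y$ would need $x,y$ to lie in a common edge, or would already yield a copy using only one of them; both are impossible because clones are not covered by a common edge.

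First, though, the graph must be cleaned up so that the ``covering pair'' structure behaves well. I would pass to the subgraph in which every pair of vertices lying in a common edge has codegree at least $\varepsilon' n^{k-2}$. Deleting pairs of low codegree removes only $o(n^k)$ edges. After this, every covering pair extends greedily in many ways, which is the key property of expansions.

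The second step identifies the limit structure. In the cleaned graph, let $\Gamma$ be the $2$-shadow graph. Because codegrees are large, any $K_{d+1}$ in $\Gamma$ can be expanded greedily to a copy of $\mathcal K$, choosing disjoint extra $(k-2)$-sets one pair at a time. Hence $\Gamma$ is $K_{d+1}$-free. Next, $G$ is contained in the $k$-clique hypergraph of $\Gamma$, so
\[
e(G)\le N_{K_k}(\Gamma)\le N_{K_k}\bigl(T(n,d)\bigr)=t^{(k)}(n,d).
\]
Here the second inequality is Zykov's theorem on maximizing $K_k$-counts in $K_{d+1}$-free graphs. Since $e(G)$ is within $\delta n^k$ of this maximum, two near-equalities follow: $\Gamma$ has nearly the maximum number of $K_k$, and $G$ misses few $k$-cliques of $\Gamma$.

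The third step is clique-count stability. I need the statement that a $K_{d+1}$-free graph with $N_{K_k}(\Gamma)\ge N_{K_k}(T(n,d))-\delta n^k$ is $\varepsilon n^2$-close to $T(n,d)$. This follows from the Erdős–Simonovits stability method, or equivalently from symmetrization applied to $\Gamma$ with the $K_k$-count as the objective, using that $T(n,d)$ is the unique optimizer and that $d>k$ makes the count strictly concave in the part sizes.

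Finally, I transfer closeness back to $G$. Take the near-$d$-partition $V_1,\dots,V_d$ of $\Gamma$. Then:
\begin{itemize}
\item every non-crossing edge of $G$ contains a pair inside some $V_i$, and such pairs form an $\varepsilon n^2$-sized set of $\Gamma$-edges, giving at most $\varepsilon n^k$ non-crossing edges;
\item the number of missing crossing edges is at most $t^{(k)}(n,d)-e(G)+\varepsilon n^k$.
\end{itemize}
Rebalancing the parts costs $o(n^k)$ edges. Together these give that $G$ is $\varepsilon\binom nk$-close to $T^{(k)}(n,d)$.

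The main obstacle is the clique-count stability in the third step together with the bookkeeping in the codegree cleaning. For the stability, I would first try the removal-lemma approach: near-optimal $K_{d+1}$-free graphs have a limit object whose $K_k$-density is maximized only by the balanced $d$-partite graphon. This uses $d\ge k$, since otherwise $K_k$ would not fit in a $d$-partite graph.
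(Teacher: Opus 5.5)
Apart from the symmetrization remark in your first step, your argument is correct. It is also a genuinely different route from the paper, which gives no proof of this statement and simply imports it from Pikhurko. The paper uses your cleaning device one level higher. In the proof of Theorem~\ref{thm-stable} it deletes edges through pairs of codegree at most $l\binom{n}{k-3}$ to pass from $L$ to $\mathcal K^{(k)}(K_{d+1})$, and then calls Theorem~\ref{thm-clique-stable} as a black box. You push the same device down to the $2$-shadow:
\begin{itemize}
\item after cleaning, $\Gamma=\partial G'$ is $K_{d+1}$-free by greedy expansion;
\item Zykov's theorem gives $e(G')\le N_{K_k}(\Gamma)\le t^{(k)}(n,d)$;
\item near-equality plus clique-count stability gives a partition with few $\Gamma$-edges inside parts, hence few non-crossing edges of $G'$;
\item edge counting then bounds the missing crossing edges.
\end{itemize}
This makes the hypergraph statement a consequence of graph-theoretic facts alone. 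The price is that the real content moves into clique-count stability for $K_{d+1}$-free graphs, which you must still supply or cite.

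Two points need fixing.

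\emph{The symmetrization claim is false.} It is not true that cloning never creates a copy of $\mathcal K$. The vertex $x$ and its clone $y$ can play two vertices of $\mathcal K$ that lie in no common edge, for example expansion vertices of two different edges, and identifying them does not give a copy. Concretely, take $\mathcal K$ and identify an expansion vertex of the edge for $v_1v_2$ with one of the edge for $v_3v_4$ into a single vertex $y$; then add an isolated vertex $x$. This $k$-graph is $\mathcal K$-free, since it has only $l-1$ non-isolated vertices. Also $x,y$ are uncovered and $\deg y>\deg x$. Yet replacing $x$ by a clone of $y$ creates a copy. Nothing later uses this step, because the shadow bound already gives the extremal inequality, so simply delete it.

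\emph{Your third step needs more than uniqueness and concavity.} Uniqueness of the Tur\'an graph as optimizer, together with concavity in the part sizes, does not by itself give stability. The limit argument you sketch works, but it also needs uniqueness of the extremal $K_{d+1}$-free graphon, which requires its own proof. A shorter route uses the extension of Maclaurin's inequality to $K_{d+1}$-free graphs, $\bigl(N_{K_k}(\Gamma)/\binom dk\bigr)^{1/k}\le\bigl(e(\Gamma)/\binom d2\bigr)^{1/2}$. This turns $N_{K_k}(\Gamma)\ge t^{(k)}(n,d)-\delta' n^k$ into $e(\Gamma)\ge t^{(2)}(n,d)-O(\delta')n^2$, after which Erd\H{o}s--Simonovits stability applies.

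Minor bookkeeping:
\begin{itemize}
\item A codegree threshold $\varepsilon' n^{k-2}$ deletes up to $\varepsilon' n^k/2$ edges, not $o(n^k)$. This is harmless for $\varepsilon'\ll\varepsilon$, and a threshold of $l\binom{n}{k-3}$ already suffices for the greedy step.
\item In the transfer step the non-crossing count applies to $G'$, so the deleted edges must be added to the error.
\end{itemize}
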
	
   \begin{thm}[Stability Theorem]\label{thm-stable}
	Let  $d > k \ge 3$ be integers, and let $L\in \mathcal{F}(k,d)$.  For every $\varepsilon > 0$, there exist $\delta = \delta(k,d,\varepsilon) > 0$  such that the following holds for all sufficiently large \(n\): if an $n$-vertex $L$-free  $k$-graph  $G$ satisfies $e(G)\ge t^{(k)}(n,d) - \delta  \binom{n}{k}$, then $G$ is $\varepsilon \binom{n}{k}$-close to the  $T^{(k)}(n,d)$. \end{thm}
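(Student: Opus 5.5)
We deduce Theorem~\ref{thm-stable} from Pikhurko's stability theorem for the expanded clique, Theorem~\ref{thm-clique-stable}, by way of the Hypergraph Removal Lemma (Theorem~\ref{thm-remove}). The underlying observation is that an $L$-free $k$-graph contains only $o(n^{|V(\mathcal K^{(k)}(K_{d+1}))|})$ copies of $\mathcal K^{(k)}(K_{d+1})$. Indeed, $F$ is $(d+1)$-chromatic, so $F$ is a subgraph of a blow-up $K_{d+1}(t)$ with $t=|V(F)|$. Consequently $L=\mathcal K^{(k)}(F)$ embeds into the expansion $\mathcal K^{(k)}(K_{d+1}(t))$. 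This expansion is contained in a suitable "blow-up" of $M:=\mathcal K^{(k)}(K_{d+1})$: the original vertices are blown up $t$ times, and each expansion vertex is blown up $t^2$ times, with one copy for each pair of blown-up endpoints. The resulting $k$-partite-like structure is the $k$-graph $M(t')$ for some constant $t'$ depending only on $L$.

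\emph{Step 1 (supersaturation for $M$ in blow-up form).} Using the standard Erd\H{o}s hypergraph blow-up/supersaturation argument, we show the following: if an $n$-vertex $k$-graph $G$ contains at least $\gamma n^{m}$ copies of $M$, where $m=|V(M)|$, then for large $n$ it contains the blow-up $M(t')$, and hence it contains $L$. The argument proceeds by viewing copies of $M$ as edges of an auxiliary $m$-partite $m$-graph of positive density and applying Erd\H{o}s' theorem that dense $m$-graphs contain $K^{(m)}(t',\dots,t')$. Contrapositively, if $G$ is $L$-free, then $G$ has fewer than $\gamma n^m$ copies of $M$ for every fixed $\gamma>0$ once $n$ is large.

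\emph{Step 2 (removal).} Given $\varepsilon$, we set $\varepsilon_1=\varepsilon/3$ and apply Theorem~\ref{thm-clique-stable} with $\varepsilon_1$, which yields $\delta_1$. We then apply Theorem~\ref{thm-remove} to $M$ with parameter $\min\{\delta_1,\varepsilon_1\}\binom nk/(2n^k)$, which yields $\gamma$. By Step~1, an $L$-free $G$ has at most $\gamma n^m$ copies of $M$. Removing at most $\min\{\delta_1,\varepsilon_1\}\binom nk/2$ edges therefore produces an $M$-free graph $G'$ with
\[
e(G')\ge t^{(k)}(n,d)-\bigl(\delta+\delta_1/2\bigr)\binom nk .
\]
Choosing $\delta=\delta_1/2$, Theorem~\ref{thm-clique-stable} shows that $G'$ is $\varepsilon_1\binom nk$-close to $T^{(k)}(n,d)$. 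Adding back the removed edges changes at most $\varepsilon_1\binom nk$ further edges, so $G$ is $\varepsilon\binom nk$-close to $T^{(k)}(n,d)$.

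The main obstacle is Step~1. One must set up the blow-up carefully so that $L$ genuinely embeds in $M(t')$. In particular, distinct edges of $F$ must receive disjoint expansion vertices, which is guaranteed by taking enough copies of each expansion vertex class. The same step must also make rigorous the passage from many copies of $M$ to a blow-up. For this we plan to invoke Erd\H{o}s' theorem on complete $m$-partite $m$-graphs via the labelled-copy hypergraph, together with a pigeonhole argument over the vertex ordering.
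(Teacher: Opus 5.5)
Your proposal is correct, and it takes a different route from the paper. The paper never invokes the removal lemma in this proof. It deletes from $G$ every edge containing a \emph{sparse} pair, meaning a pair lying in at most $l\binom{n}{k-3}$ edges; this costs only $O(n^{k-1})$ edges. It then asserts that the remainder $G'$ is $M$-free outright, where $M=\mathcal K^{(k)}(K_{d+1})$. The reason given is that the core pairs of a copy of $M$ in $G'$ are dense, and dense pairs can be greedily extended to expanded edges that avoid previously used vertices.

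That route is more elementary, but as written it only places $L$ when the original vertices of $F$ fit on the $d+1$ core vertices, i.e.\ when $F=K_{d+1}$. For a general color-critical $F$ with $\chi(F)=d+1$, one needs a blow-up $K_{d+1}(t)$ inside the graph of dense pairs, and that requires an additional graph-level supersaturation or removal step. Your chain
\[
L\subseteq\mathcal K^{(k)}(K_{d+1}(t))\subseteq M(t^2),
\]
combined with Erd\H{o}s's theorem on complete $m$-partite $m$-graphs, handles exactly this point. The price is using the hypergraph removal lemma, which the paper states anyway as Theorem~\ref{thm-remove}. Your route also gives $\delta=\delta_1/2$ depending only on $k,d,\varepsilon$, as the statement requires, with only the threshold for $n$ depending on $L$.

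Two small adjustments are worth making.
\begin{itemize}
\item The accuracy parameter you feed to the removal lemma must be a constant independent of $n$. Take, say, $\min\{\delta_1,\varepsilon_1\}/(4\,k!)$ and use $\binom nk\ge n^k/(2\,k!)$ for large $n$.
\item In Step~1 you do not need any pigeonhole over vertex orderings. Build the auxiliary $m$-partite $m$-graph on $V(G)\times[m]$ from labelled copies of $M$. Every transversal of the resulting $K^{(m)}(s,\dots,s)$ is then an injective embedding. This forces each $s$-set to lie in a single class, and the $s$-sets to project to pairwise disjoint subsets of $V(G)$, which gives $M(s)\subseteq G$ directly.
\end{itemize}
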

	\begin{proof}
	Assume that $L= \mathcal{K}^{(k)}(F)$, we have  $e(L)=e(F), |V(L)|=|V(F)|+e(F)(k-2)$ and  $\chi(F)=d+1$.
	Let $\varepsilon>0$ be given. Choose $\delta>0$ sufficiently small such that the
	stability theorem for $\mathcal{K}^{(k)}(K_{d+1})$ holds with respect to $\varepsilon/2$.
	Assume further that $\delta\le \varepsilon$ and  $n$ be sufficiently large.  Let 
	$G$ be an $L$-free $k$-graph on vertex set $[n]$ satisfying
	$e(G)\ge t^{(k)}(n,d)- {\delta\over 2} \binom{n}{k}$.
	We call a pair $\{x,y\}\subseteq V(G)$ \emph{sparse} if it is contained in at most
	\[
	m=\left(|V(F)|+(k-2)e(F)\right)\binom{n}{k-3}
	\]
	edges of $G$. Let $G'$ be the subgraph obtained from $G$ by removing all edges that contain at least one sparse pair. Since each sparse pair is contained in at most $m$ edges, and there
	are at most $\binom{n}{2}$ pairs of vertices, the total number of removed edges is at
	most
	$\binom{n}{2}\cdot m < \frac{\delta}{2}\binom{n}{k}$
	provided $n$ is sufficiently large.
	Hence,\[e(G')\ge e(G)-\frac{\delta}{2}\binom{n}{k}\ge t^{(k)}(n,d)-\delta\binom{n}{k}.\]
	
	We now show that $G'$ is $\mathcal{K}^{(k)}(K_{d+1})$-free. Suppose otherwise that $G'$ contains a copy
	of $\mathcal{K}^{(k)}(K_{d+1})$ on vertex set $S\subseteq G'$. By definition of $G'$, every pair $\{x,y\}\subseteq S$
	is not sparse in $G$, meaning that each such pair is contained in more than $m$ edges of $G$.
	This abundance of edges allows us to extend the partial embedding of $L$ inside
	$G$ inductively: given a core embedding on $S$, one can always find an additional edge
	containing a prescribed vertex while avoiding previously used vertices, because each pair
	has sufficiently many available extensions. Consequently, $G$  contain a copy of
	$L$, contradicting the assumption.
	Therefore $G'$ is $\mathcal{K}^{(k)}(K_{d+1})$-free.
	
	By the stability Theorem \ref{thm-clique-stable} for $\mathcal{K}^{(k)}(K_{d+1})$, the graph $G'$ is $\varepsilon/2$-close
	to  $T^{(k)}(n,d)$. That is, by modifying at most
	$\varepsilon/2\binom{n}{k}$ edges, $G'$ can be transformed into $T^{(k)}(n,d)$.
	Finally, the triangle inequality implies that $G$ itself is
	$\varepsilon$-close to $T^{(k)}(n,d)$, since
$	|E(G)\triangle E(G')|\le \frac{\delta}{2}\binom{n}{k}$ and	$\delta\le \varepsilon$.
	This completes the proof.
	\end{proof}

	\begin{remark} \label{remark-mb}Fix a $k$-graph $L\in \mathcal{F}(k,d)$ with $l$ vertices. Let constants satisfying  $1>\delta_4\gg\cdots\gg \delta_1\gg1/n$, and let $H$ be an $n$-vertex $k$-graph with $e(H)\ge t^{(k)}(n,d)$.
	Suppose that $\N_L(H)\le\delta_1 n^{l}.$
	By Theorem \ref{thm-remove}, there exists a set of at most ${1\over2}\delta_2 n^k$ edges whose removal eliminates all copies of $L$, in other words,  there exists an $L$-free $k$-graph $ H'$ satisfying
	$|E( H')|\ge|E (H)|-{1\over2}\delta_2n^k\ge t^k(n,d)-{1\over2}\delta_2n^k.$
	Next, applying Theorem \ref{thm-stable}, there exists an $d$-partition
	$
	V_1\cup\cdots\cup V_d
	$
	of $V( H)$ such that at least $e(H')-{1\over2}\delta_3n^k$ edges of $H'$ are crossing edges.
    Let $h_i$ denotes the number of edges of $H$ intersecting exactly $i$ of the parts, and then $\sum_{i\in[ k-1]}ih_i\le \delta_3n^k$, $h_k\ge e(H')- \delta_3n^k $. Among all such partitions, choose one maximizing	$\Phi:=\sum_{i\in[ k]}ih_i$, which is  possible since 
	$\Phi\ge ke(H)-(k-1)\sum_{i\in[ k-1]}h_i\ge k\cdot t^{(k)}(n,d)-(k-1){1\over2}\delta_3n^k$ and $\Phi\le ke(H')-\sum_{i\in[ k-1]}h_i\le k\cdot t_d^{(k)}(n)-\delta_3n^k$.
	Denote by $K^{(k)} (V_1,\dots, V_d)$ the complete $d$-partite $k$-graph with vertex partition $
	V_1\cup\cdots\cup V_d
	$.
	By Lemma \ref{lem-st}, for each $i\in[d]$, $n/d- \delta_4n\le n_i:=|V_i|\le n/d+ \delta_4n$. 
	\end{remark}
	Let $B(H)=E(H)\setminus E(K^{(k)} (V_1,\dots, V_d) )$
	be the collection of \emph{bad edges}, and let
	$M(H)=E(K^{(k)} (V_1,\dots, V_d) )\setminus E(H)$ 	be the collection of \emph{missing edges}. Therefore 
	\begin{align}\label{eq-mb}
	|M(H)|+(e(H)-t^{(k)}(n,d))\le|B(H)|&=|E (H)\setminus E(K^{(k)} (V_1,\dots, V_d) )|\notag\\&\le {1\over2}\delta_2n^k+{1\over2}\delta_3n^k\le \delta_3n^k.
	\end{align}

A \emph{potential copy} of $L$ is a copy from $K^{(k)} (V_1,\dots, V_d) +B(H)$, and we say a potential copy of $L$ is \emph{missing copy}   if it  contains some  edge from $M(H)$. Write $d_M(x)$ for the number of missing edges containing $x$.
Next, we establish a stronger upper bound on $B(H)$. 
	This structural estimate will be the key ingredient in the proof of the main theorem.	
	\begin{lem} \label{lem-b}\label{lem-local-cleaning} Let  $d > k \ge 3$ be integers. Fix $L\in \mathcal{F}(k,d)$ with $l$ vertices. Let constants satisfying  $1>\delta_7\gg\cdots\gg \delta_1\gg1/n$, and let  $H$, $\Phi$, $K=K^{(k)}(V_1,\ldots,V_d)$, $B(H)$, and
$M(H)$ satisfying Remark \ref{remark-mb}.
Suppose that $\N_L(H)\le\delta_1n^{l-1}$, then there exist constant $C$ such that
$|B|\le C\cdot \mathcal N_L(H)n^{k-l}$.
\end{lem}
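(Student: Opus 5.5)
The plan is to show that every bad edge lies in many copies of $L$, and that these copies mostly avoid the missing edges. Then double counting gives $|B|\cdot\Omega(n^{l-k})\le O(\mathcal N_L(H))$.

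\textbf{Step 1: bad edges have no vertex of high missing degree.} First we use the maximality of $\Phi$ (local optimality of the partition). Call a vertex $x$ \emph{deviant} if $d_M(x)\ge \delta_5 n^{k-1}$. Since $|M|\le\delta_3 n^k$, there are at most $k\delta_3 n/\delta_5$ deviant vertices. Let $x$ be any vertex, say $x\in V_i$. If $x$ lies in $\beta n^{k-1}$ edges that are non-crossing because of $x$, then moving $x$ to another part $V_j$ must not increase $\Phi$. This yields an inequality comparing the number of edges through $x$ crossing with respect to $V_i$ and with respect to $V_j$. The part sizes are almost balanced (Remark~\ref{remark-mb}), so every non-deviant vertex has crossing degree close to $t^{(k)}(n,d)\cdot k/n$ into every choice of part. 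We then split $B=B_1\cup B_2$. Here $B_1$ consists of bad edges containing a deviant vertex, and $B_2$ consists of the remaining bad edges.

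\textbf{Step 2: copies through an edge of $B_2$.} Fix $e\in B_2$. By Theorem~\ref{thm-c} there are at least $(\alpha_L-o(1))n^{l-k}$ potential copies of $L$ in $K\cup\{e\}$ using $e$ as their unique non-crossing edge. We need to bound how many of them contain a missing edge. A potential copy is determined by extending from $e$ one expanded edge at a time. Each step is a choice of a crossing edge through one or two already-embedded vertices, and the other $k-1$ or $k-2$ vertices are free.

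A missing edge can be hit in two ways:
\begin{itemize}
\item[(a)] A missing edge through a vertex of $e$. This is bounded by $\sum_{x\in e} d_M(x)\cdot n^{l-k-(k-1)}$, which is at most $k\delta_5 n^{l-k}$ since no vertex of $e$ is deviant.
\item[(b)] A missing edge among the new vertices. This contributes at most $|M|\cdot n^{l-k-k}\le\delta_3 n^{l-k}$, since it fixes $k$ vertices that are otherwise free.
\end{itemize}
Hence each $e\in B_2$ lies in at least $\tfrac{\alpha_L}{2}n^{l-k}$ genuine copies in $H$. Each copy of $L$ contains at most $e(L)$ bad edges, so
\[
|B_2|\cdot\tfrac{\alpha_L}{2}n^{l-k}\le e(L)\,\mathcal N_L(H).
\]

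\textbf{Step 3: handling $B_1$, which I expect to be the main obstacle.} For a deviant vertex $x$, I would show that local optimality forces many copies through $x$. Let $x\in V_i$ be deviant. Since $x$ misses $\delta_5 n^{k-1}$ crossing edges, while $e(H)\ge t^{(k)}(n,d)$ and $|B|$ is small, we want $x$ to have linear-in-$n^{k-1}$ degree into each pair of parts. This is forced by the following case split:
\begin{itemize}
\item Otherwise $x$ has low total degree. Then deleting $x$'s edges and re-adding would contradict $e(H)\ge t^{(k)}(n,d)$ combined with $\mathcal N_L(H)\le\delta_1n^{l-1}$. Concretely, $H-x$ has few copies and at most $\mathrm{ex}(n-1,L)+O(\delta_1)$ edges by the Step~2 count. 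This gives a degree lower bound on every vertex.
\item If $x$ is non-crossing-heavy to every part, then $x$ is the image of an original vertex of $F$ in about $\Omega(n^{l-1})$ copies. This is built from the $(d+1)$-chromatic structure, using that $x$ has edges of substantial density to each $V_j$.
\end{itemize}
This gives $\mathcal N_L(H)\ge c\, n^{l-1}$, contradicting $\mathcal N_L(H)\le\delta_1 n^{l-1}$ once $\delta_1\ll\delta_5$. So no deviant vertices exist, $B_1=\emptyset$, and the lemma follows with $C=2e(L)/\alpha_L$.

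The delicate part is proving that a deviant vertex has high degree to all $d$ parts simultaneously. I would first try a min-degree argument: a vertex of degree below $(1-\delta_6)\binom{n}{k-1}(1-1/d)^{\ldots}$ can be deleted and replaced by a clone of a typical vertex, which contradicts the choice of $H$ relative to $\mathrm{ex}(n,L)$ and the copy count.
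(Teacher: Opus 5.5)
Your Step~2 is correct and is the paper's Claim~1 read contrapositively. Only the two vertices of the chosen bad pair of $e$ actually matter there, because the other vertices of $e$ are degree-one expansion vertices in the copies you build.

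The gap is Step~3, which carries all the difficulty, and none of the ingredients you offer closes it. Local optimality cannot by itself force copies through a deviant vertex: an isolated vertex is deviant, lies in no copy, and $\Phi$-maximality says nothing about it. What excludes it is the global inequality $e(H)\ge t^{(k)}(n,d)$, i.e.\ the bad edges that must compensate for its missing edges. Your low-degree bullet tries to use this via $e(H-x)\le\mathrm{ex}(n-1,L)+O(\delta_1n^{k-1})$ ``by the Step~2 count''. That is circular: this bound is exactly the supersaturation statement the lemma is supposed to deliver. Moreover, Step~2 only controls bad edges avoiding deviant vertices, whereas $H-x$ may still contain up to $k\delta_3n/\delta_5$ deviant vertices. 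The cloning fallback is unavailable too. The lemma assumes nothing about $H$ beyond $e(H)\ge t^{(k)}(n,d)$, $\mathcal N_L(H)\le\delta_1n^{l-1}$ and the $\Phi$-maximal partition, so there is no choice of $H$ to contradict. Finally, your case split omits a vertex of large degree that is sparse towards one part, which is precisely where $\Phi$-maximality must be used.

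The missing idea is the paper's per-vertex bound. For every $x\in V_j$, the set $Y_j(x)$ of bad edges containing $x$ and another vertex of $V_j$ satisfies $|Y_j(x)|\le6\delta_7n^{k-1}$. To prove it, let $Y_i=\{v\in V_i:d_H(xv)\ge\delta_7n^{k-2}\}$. Suppose $|Y_i|\ge\delta_7n$ for every $i$. Embed $F$ with $x$ as a vertex $u$ for which $F-u$ is $d$-colorable, using present edges at $x$ and crossing edges elsewhere. This yields $\Omega(n^{l-1})$ potential copies through $x$. Each missing edge lies in at most $n^{l-k-1}$ of them, so either $\mathcal N_L(H)>\delta_1n^{l-1}$ or $|M|>\delta_3n^k$, a contradiction. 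Hence some $|Y_m|<\delta_7n$. A large $Y_j(x)$ forces $|Y_j|\ge\delta_7n$, so $m\ne j$, and moving $x$ to $V_m$ would increase $\Phi$.

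With this bound you never have to exclude deviant vertices one at a time. The paper double-counts pairs $(e,f)$ with $e$ a cheap bad edge and $f\in M$ meeting the bad pair of $e$. Each $e$ has $\Omega(\delta_5n^{k-1})$ such $f$ by Step~2, and each $f$ has at most $6k\delta_7n^{k-1}$ such $e$, so at most $|B|/2$ bad edges are cheap. In your notation, redefine $B_1$ as the bad edges having a bad pair that meets the deviant set $X$. Then $B_1\subseteq\bigcup_{x\in X}Y_j(x)$ (with $j$ the part of $x$), $B_2=B\setminus B_1$ is covered by Step~2, and
\[
\frac{\delta_5}{k}|X|n^{k-1}\le|M|\le|B|\le|B_2|+6\delta_7|X|n^{k-1},\qquad |B_2|=O(\delta_1n^{k-1}),
\]
which forces $X=\emptyset$. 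Either way the constants must satisfy $\delta_7\ll\delta_5$ together with $\delta_1,\delta_3\ll\delta_7$.
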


	\begin{proof} 
    
		Let $B=B(H), M=M(H), L=\mathcal{K}^{(k)}(F)$.	Let $L(e)$ denote the number of copies of $L$ in $H$ that use the  edge $e$ as the unique edge from $B$. Let $D:=\{e\in  B\mid L(e)< (1-2\delta_5)c(n,L)\}$.

		\begin{clm}\label{clm-dmx}
	  For every  $e\in D$.   Suppose that   $e=x_1x_2\cdots x_k$ and $x_1,x_2$  in a common
part of $K$. 		There exists a vertex
			$x\in \{x_1,x_2\}$
			such that
			$
			d_M(x)\ge\delta_8n^{k-1}.
			$	
		\end{clm}
		
		Since $F$ is color-critical with $\chi(F)=d+1$, there exists an edge
		$uv\in E(F)$ such that $F-uv$ is $d$-colorable. Fix a proper $d$-coloring of
		$F-uv$, and let $u_1,\ldots,u_d$ denote the sizes of the corresponding color classes.
		Choose disjoint vertex sets
		$U_i\subseteq V_i\setminus\{x_1,x_2,\dots ,x_k\}$
		with $|U_i|=u_i$ for all $i\in[d]$. 
		For every pair of distinct vertices $
		\{a,b\} 
		$ with
		$a\in \bigcup_{i=1}^{d} U_i$ and 
		$b\in \{x_1,x_2\}\cup\bigcup_{i=1}^{d} U_i
		$, 
		select  vertex set
		$
		V_{ab}
		$(might be empty)
		from $k-2$ part different from those containing $a$ and $b$, and  does not contain any vertex chosen   previously. 
		The resulting collection of sets determines a potential copy of $L$ containing the edge $e$,  with  edge set $\{e\}\cup \{\{a,b\}\cup V_{a,b}\}_{a,b}$  and original vertex set $\{x_1,x_2\}\cup\bigcup_{i=1}^{d} U_i$.
		Recall the proof of $c(n,L)$, note that 
		the number of such potential copy of $L$ is at least
		$
		(1-\delta_5)c(n,L)
		$ since the fact that 
		$||V_i|-\frac{n}{d}|\le\delta_4n
		$ for every $i\in[d]$.

		At least
		$
		\delta_5c(n,L)
		$
		of these potential copies is missing, since less than
		$
		(1-2\delta_5)c(n,L)
		$
		of these potential copies are present in $ H$.
		 On the other hand, if at least
		$
		\frac{\delta_5}{2}c(n,L)
		$
		of these  missing copies of $L$ contain missing edges from $M$ omitting edge $e$. Since each missing edges from $M$ is counted at most
		$
		n^{l-2k}
		$
		times, we have
		$
		\frac{\delta_5}{2}c(n,L)/n^{l-2k}
		\le |M|
		<\delta_3n^k,
		$
		a contradiction.
		This implies that at least
		$
		\frac{\delta_5}{2}c(n,L)
		$
		of these missing copies of $L$ contain a missing edge from $M$  intersecting edge $e$. In fact, every such missing edge shares exactly one vertex $(x_1\ or\ x_2)$ with $e$, since any two edges of $L$ intersect in at most one vertex and $x_1,  x_2$ is the original vertices. 
		On the other hand, every such missing edge is counted at most
		$
		n^{l-2k+1}
		$
		times, then there exists a constant $\delta_8\gg\delta_5$ and one vertex $x\in\{x_1,x_2\}$ such that
		$
		d_M(x)\ge\frac{\delta_5}{2}c(n,L)/(2n^{l-2k+1})\ge\delta_8n^{k-1}.
		$ 
		By symmetry, suppose the inequality holds for $x_1$. This completes the proof.

	Fix a vertex $x\in V_j$, let
		$
		Y_j(x)
		$
		denote the set of edges in $B$ containing $x$ and at least two vertices from $V_j$. Next  we  show  that
		$
		|Y_j(x)|\le 6\delta_7n^{k-1}.
		$
		Let $d_{H}(uv)$ denote  the numbeb of edges contains vertices $u$ and $v$. For each $i\in[d]$,  define
		$
		Y_i=\{v\in V_i:d_{\mathcal H}(xv)\ge \delta_7n^{k-2}\}.
		$
		
		\begin{clm}
			There are at most  $d-1$ indices  $i\in[d]$ such that
			$
			|Y_i|\ge\delta_7n.
			$
		\end{clm}

		Since $F$ is  color-critical, there exists a vertex $u$ satisfying  $\chi(F-u)\ge d$. 	 Let $\{U'_1,\dots, U_d'\}$  be a proper $d$-colorings of $F-u$.
		Suppose for every $i\in[d]$,
		$
		|Y_i|\ge \delta_7n
		$. Then the number of choices of
		$
		U_1\times\cdots\times U_d\subseteq  Y_1\times\cdots\times Y_d
		$ with $|U_i'|=|U_i|$ for all $i\in[d]$
		is at least
		$
		\prod_{i=1}^{d}{\delta_7n-f\choose  |U_i|}> \frac{1}{2}(\delta_7n/f)^{f-1}.
		$
		For any   two vertices
		$a,b$ from  distinct parts of $ U_1\cup\cdots\cup U_d
		$,
		select  a $(k-2)$-tuple $
		V_{ab}
		$ of  vertices 	from $k-2$ parts different from those containing $a$ and $b$, such that $x\notin W_{ab}$. Then there are at least
		$
		{d-2\choose k-2}(n/d-\delta_4)n^{k-2}>\frac{1}{2}(n/d)^{k-2}
		$
		choices for each $V_{ab}$.
		For every vertex
		$
		c\in \bigcup_{i=1}^{d}U_i,
		$
		let $V_c$ be a set of $k-2$ vertices such that
		$
		\{x,c\}\cup V_c
		$ is an edge of $H$.
		Following the definition of $Y_i$, there are at least
		$
		\delta_7n^{k-2}
		$
		choices for each such $V_c$.
		
		Consequently, these 	choices of the above $
		(l-1)
		$ additional vertices form a potential copy of $L$ with original vertices
		$
		x\cup (\bigcup_{i=1}^{d}U_i)
		$, 
		and the total number of  such copies is at least
		\[\frac{1}{2}(\delta_7n/f)^{f-1}\cdot\left(\frac{1}{2}(n/d)^{k-2}\right)^{e(F)-d_{F}(x)}(\delta_7n^{k-2})^{d_{F}(x)}
		>(\delta_6n)^{l-1}, 
		\]
		where	$\delta_6>0$ be a sufficiently small  constant.
		If at least
		$
		\frac{\delta_6}{2}n^{l-1}
		$
		of these potential copies lying in $H$, then
		$
		\N_H(L)>
		\delta_1 n^{l-1},
		$
		a contradiction.
		Thus at least
		$
		\frac{\delta_6}{2}n^{l-1}
		$
		of these potential copies are missing copies (i.e., contain an edge  from $M$). Since $x$ is fixed and each missing edge  is counted at most
		$
		n^{l-k-1}
		$
		times. It implies that $\frac{\delta_6}{2}n^{l-1}/n^{l-k-1}\le|M|<\delta_3n^{k}$,
		again contradicting.
		This proves the Claim.

		Suppose otherwise $
		|Y_j(x)|>6\delta_7n^{k-1}
		$. Let $T(x)$ be the family of $(k-1)$-sets $S$ such that
$S\cup\{x\}\in Y_j(x)$.
		Recall the definition of $Y_j(x)$, we have 
		$
		|T(x)|=|Y_j(x)|
		$, and for any $K\in T(x)$, $K\cap V_j\neq \emptyset$. 
		If $|Y_j|<\delta_7n$ then 		\begin{align*}|Y_j(x)|
		\le
		\sum_{v\in V_j}d_{Y_j(x)}(v)
		=
		\sum_{v\in Y_j}d_{Y_j(x)}(v)
		+
		\sum_{v\in V_j\setminus Y_j}d_{Y_j(x)}(v)
		<|Y_j|n^{k-2}+|V_j\setminus Y_j|\delta_7 n^{k-2}
		\le
		2\delta_7n^{k-1},
		\end{align*}
	a	contradiction.
		Hence $
		|Y_j|\le \delta_7n$ and one of the sets
		$
		Y_m, m\neq j
		$
		has size smaller than $\delta_7n$ by the Claim above. By symmetry, assume that
		$
		|Y_d|<\delta_7n.
		$
		The number of edges of $ H$ containing $x$ and intersecting $V_d$ is at most
		\[\sum_{v\in V_d}d_{H}(xv)
		=
		\sum_{v\in Y_d}d_{H}(xv)
		+
		\sum_{v\in V_1\setminus Y_1}d_{H}(xv)
		<|Y_d|n^{k-2}+|V_d\setminus Y_d|\delta_7 n^{k-2}
		\le
		2\delta_7n^{k-1}.
		\]
		On the other hand, the number of edges in $Y(x)$ avoiding $V_d$ is at least
		$
		|Y_j(x)|-2\delta_7n^{k-1}
		>
		4\delta_7n^{k-1}.
		$
		
		Finally, move the vertex $x$ from $V_j$ to $V_d$. Every edge in $Y_j(x)$ avoiding $V_d$ increases its contribution to
		$
		\Phi
		$
		by exactly one. Every edges of $H$ containing $x$ and intersecting $V_d$ decreases its contribution to
		$
		\Phi
		$ by at most one. 
		All remaining edges containing $x$, namely those whose other $k-1$ vertices lie in
		$
		\bigcup_{i\neq j} V_{i},
		$
		do not change their contribution. Hence the total change in
		$
		\Phi
		$
		is at least
		$
		4\delta_7n^{k-1}-2\delta_7n^{k-1}
		=
		2\delta_7n^{k-1}>0.
		$
		This contradicts the choice of the partition as one maximizing
		$
		\Phi
		$.
	It shows that the assumed failure is impossible. Hence $|Y_j(x)|\le 6\delta_7n^{k-1}$.

        Let $e\in D, f\in M$, we say $(e,f)$ is a good pair if there  exist a vertex $u\in e\cap f$ such that $u\in e\cap V_i$   with $|e\cap V_i|\ge 2$.
	Since $|Y_j(x)|\le6\delta_7n^{k-1}$,  for each missing edge  $f\in M$,  the  number of good pairs $(e,f)$ with $e\in D$ is at most $6k\delta_7n^{k-1}$. 
By Claim \ref{clm-dmx}, for each bad edge $e\in D$,  the  number of good cuples $(e,f')$ with $f\in M$ is at least $\delta_8n^{k-1}$. 
Since $|M|\le |B|$, a double counting gives $|D|\delta_8n^{k-1}\le 6k\delta_7n^{k-1}$. Choose the constants ao that $1/2\delta_8\gg 6k\delta_7$.
Thus there exist constant $C $ such that 
$\N_L(H)\ge  (|B|-|D|)(1-2\delta_5)c(n,L)\ge |B|Cn^{k-l}$.
	\end{proof}

\subsection{Balancing and construction}\label{s2.3}
we first understand the structure of
	$d$-partite $k$-graphs whose number of edges is close to
	$t^{(k)}(n,d)$. Next, we bound the change in the
copy count when balance the parts.
Finally, we give a random edge addition estimate for copies
involving several added edges.

	\begin{lem}\label{lem-st}
		Let \(d> k\ge3\)be integers, and let $t=t(n)\ge0$ satisfy $t=o(n^k)$.
Suppose that $x_1,\ldots,x_d\ge0$ are integers with $\sum_{i=1}^d x_i=n$ and 
\[
 S_k=\sum_{1\le i_1<\cdots<i_k\le d}x_{i_1}\cdots x_{i_k}
 \ge t^{(k)}(n,d)-t.
\]
Then for 
sufficiently large $n$,   $\max_{i\in[d]}|x_i-n/d|=O_{k,d}(1+\sqrt {t/n^{k-2}})$. 
In particular, there are balanced integers
$n_1,\ldots,n_d$ with $\sum_{i=1}^d n_i=n$ such that
		$\max_{i\in[d]}|x_i-n_i|=O_{k,d}(\sqrt{t/n^{k-2}})$.
	\end{lem}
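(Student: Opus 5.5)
We would prove Lemma~\ref{lem-st} by a second-order expansion of the elementary symmetric polynomial $S_k$ around the balanced point. Write $x_i=n/d+y_i$ with $\sum_i y_i=0$, and set $Y=\max_i|y_i|$. Since $S_k=e_k(x_1,\ldots,x_d)$, the expansion is exact and polynomial:
\[
S_k=e_k(n/d,\ldots,n/d)+\sum_{j=1}^{k}\Bigl(\tfrac nd\Bigr)^{k-j}\binom{d-j}{k-j}e_j(y_1,\ldots,y_d).
\]
The $j=1$ term vanishes because $e_1(\boldsymbol y)=0$. The $j=2$ term equals $-\tfrac12\binom{d-2}{k-2}(n/d)^{k-2}\sum_i y_i^2$, since $e_2(\boldsymbol y)=\tfrac12\bigl((\sum y_i)^2-\sum y_i^2\bigr)=-\tfrac12\sum y_i^2$. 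Here $d>k$ guarantees $\binom{d-2}{k-2}>0$. Also $e_k(n/d,\ldots,n/d)\ge t^{(k)}(n,d)$ up to an $O(n^{k-2})$ rounding error, because $t^{(k)}(n,d)$ is the integer maximum and the real maximum is attained at the balanced point (Maclaurin's inequality).

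\textbf{Step 1 (a priori closeness).} First we show $Y=o(n)$. The function $e_k$ on the simplex $\sum x_i=n$ has its unique maximum at the balanced point; this is strict Schur-concavity when $d>k$. So by compactness, if $Y\ge \epsilon n$ then $S_k\le t^{(k)}(n,d)-c(\epsilon)n^k$. This contradicts $t=o(n^k)$ for large $n$.

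\textbf{Step 2 (quadratic term dominates).} With $Y=o(n)$, each term $j\ge3$ is bounded by $O(n^{k-j}Y^j)\le o(1)\cdot n^{k-2}Y^2$. The quadratic term is at most $-c\,n^{k-2}Y^2$, because $\sum y_i^2\ge Y^2$. Hence
\[
t^{(k)}(n,d)-t\le S_k\le t^{(k)}(n,d)+O(n^{k-2})-\tfrac{c}{2}n^{k-2}Y^2.
\]
This gives $Y^2=O(1+t/n^{k-2})$, that is, $Y=O_{k,d}(1+\sqrt{t/n^{k-2}})$.

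\textbf{Step 3 (balanced integers).} For the second assertion, we choose balanced integers $n_i\in\{\lfloor n/d\rfloor,\lceil n/d\rceil\}$ with $\sum n_i=n$, matching the order of the $x_i$ (larger $x_i$ get the ceiling). Then $|x_i-n_i|\le Y+1$. When $t\ge n^{k-2}$, the bound from Step 2 absorbs the additive constant, giving $O(\sqrt{t/n^{k-2}})$.

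When $t<n^{k-2}$, a small extra argument is needed. Any integer vector differing from a balanced one must lose at least order $n^{k-2}$ edges. Indeed, moving one vertex from a part of size $a$ to a part of size $b\le a-2$ changes $S_k$ by $(a-b-1)\,e_{k-2}(\text{others})\ge\Omega(n^{k-2})$. So for $t<c\,n^{k-2}$ with $c$ small, the vector must already be balanced. For $c\,n^{k-2}\le t<n^{k-2}$, the constant $1$ is $O(\sqrt{t/n^{k-2}})$ anyway.

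The main obstacle is Step 1: turning $t=o(n^k)$ into $Y=o(n)$ rigorously. This needs a clean argument that the balanced point is the unique maximizer of $e_k$ on the simplex. We would argue it by smoothing. Replacing two coordinates $x_i,x_j$ by their average changes $e_k$ by $\tfrac14(x_i-x_j)^2e_{k-2}(\text{others})$. This is positive unless the other coordinates' $e_{k-2}$ vanishes, which cannot happen near the optimum since $d-2\ge k-2$. A compactness argument then gives the quantitative gap $c(\epsilon)$.
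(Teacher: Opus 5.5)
Your proposal is correct and follows essentially the paper's route: expand $S_k=e_k(n/d+y_1,\ldots,n/d+y_d)$ in elementary symmetric functions of the deviations, drop the linear term since $e_1(\boldsymbol y)=0$, use $e_2(\boldsymbol y)=-\frac12\sum_i y_i^2$ for the negative quadratic term, absorb the $j\ge3$ terms via $\max_i|y_i|=o(n)$, and compare with $t^{(k)}(n,d)\ge\binom dk(n/d)^k-O(n^{k-2})$; note this last inequality comes from evaluating the same expansion at a balanced integer vector, as the paper does, not from Maclaurin, which gives only the opposite inequality. Your version is more complete than the paper's in two places: the paper only asserts the a priori bound $\max_i|y_i|=o(n)$, which your smoothing-plus-compactness argument proves, and in the ``in particular'' clause it absorbs the additive constant by appeal to a lower bound on $t$ that is never established, a gap your one-vertex-move argument closes by showing the vector must be exactly balanced when $t<c\,n^{k-2}$.
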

	
	\begin{proof}
		Define \(\mu := n/d\),   \(y_i := x_i - \mu\), $s:=\max_{i\in[d]}|y_i|$. Then \(\sum_{i=1}^r y_i = 0\) and  
		\[
		S_k =\sum_{1\le i_1<\cdots<i_k\le d} (y_{i_1}+\mu)\cdots (y_{i_k}+\mu) =\sum_{j=0}^k \mu^{k-j} \binom{r-j}{k-j} e_j(y),
		\]
		where \(e_j(y)=\sum_{1\le i_1<\cdots<i_j\le d} y_{i_1}\cdots y_{i_j}\) is the \(j\)-th elementary symmetric sum of \(y_1,\dots,y_d\) (with \(e_0=1\)).  
		Since \(e_1(y)=0\), we have
		\[
		S_k = \binom{r}{k}\mu^k + \mu^{k-2}\binom{r-2}{k-2}e_2(y) + \sum_{j=3}^k \mu^{k-j}\binom{r-j}{k-j}e_j(y).
		\]
		Because \(e_2(y) = \frac12\bigl((\sum_{i\in[d]} y_i)^2 - \sum_{i\in[d]} y_i^2\bigr) = -\frac12\sum_{i\in[d]} y_i^2\), we obtain
		\begin{align}
		S_k = \binom{d}{k}\mu^k - \frac12\mu^{k-2}\binom{d-2}{k-2}\sum_{i=1}^d y_i^2 + \sum_{j=3}^k \mu^{k-j}\binom{d-j}{k-j}e_j(y). \label{eq1}
		\end{align}

		Note that \(s=o(n)\) for every \(i\in[d]\), since $
		S_k > t^{(k)}(n,d) - t
		$ and $t=o(n^{k})$. Then \(\sum_{i\in[d]} y_i^2 \le d s^2\) and \(|e_j(y)| \le \binom{d}{j}s^j\).  Thus
		\[
		\left|\sum_{j=3}^k \mu^{k-j}\binom{d-j}{k-j}e_j(y)\right| \le \sum_{j=3}^k \mu^{k-j}\binom{d-j}{k-j}\binom{d}{j}s^j = O(\mu^{k-3}s^3).
		\]
		From (\refeq{eq1}) we obtain $S_k \ge \binom{d}{k}\mu^k - \frac12\mu^{k-2}\binom{d-2}{k-2}\cdot d s^2 - O(\mu^{k-3}s^3).$
	So  	\begin{align}\label{eq4}
		\binom{d}{k}\mu^k-t^{(k)}(n,d) \le \frac12\mu^{k-2}\binom{d-2}{k-2}\cdot d +O(\mu^{k-3}). 
		\end{align}

		On the other hand, assume without loss of generality that \(|y_1|=s\).
		Since \(\sum y_i = 0\), the minimum of \(\sum y_i^2\) subject to \(|y_1| =s\) is attained when
		$
		y_1 = s, y_2 = \dots = y_d = -\frac{s}{d-1}$,
		which implies
		\begin{align}
		\sum y_i^2 \ge \frac{d}{d-1}{s'}^2. \label{eq2}
		\end{align}	
		For \(j\ge 3\), \(|e_j(y)| \le \binom{d}{j} (\max_{i\in[d]}|y_i|)^j = O({s}^j)\).  Hence
		$
		\left|\sum_{j=3}^k \mu^{k-j}\binom{d-j}{k-j}e_j(y)\right| = O(\mu^{k-3}s'^3)$.
		From (\refeq{eq1}) and (\refeq{eq2}) we get
		\begin{align}\frac12\mu^{k-2}\binom{d-2}{k-2}\cdot\frac{d}{d-1}s^2  \le \binom{d}{k}\mu^k - S_k+ O(\mu^{k-3}s^3). \label{eq3}	\end{align}	
The hypothesis implies
$\binom{d}{k}\mu^k -
 S_k<\binom{d}{k}\mu^k -t^{(k)}(n,d)+t.$
Combine this with \eqref{eq4} and \eqref{eq3}, 
 $s
 =O_{k,d}\left(1+\sqrt{\frac{t}{n^{k-2}}}\right)$.
If $d\mid n$ then $s
 =O_{k,d}\left(\sqrt{\frac{t}{n^{k-2}}}\right)$.
 Choose any balanced integer vector $(m_1,\ldots,m_d)$ with sum $n$.
Since $|m_i-n/d|\le1$, this gives
\[
 \max_i|x_i-m_i|
 \le\max_i|x_i-n/d|+1
 =O_{k,d}\left(\sqrt{\frac{t}{n^{k-2}}}\right).
\]
Here the additive constant is absorbed using the preceding lower
bound on $t$.
	\end{proof}
	
\begin{lem}\label{lem-integer-balancing}\label{lem-count-transfer}Let  $d > k \ge 3$ be integers, and let $L\in \mathcal{F}(k,d)$. 
Let $K=K^{(k)}(V_1,\ldots,V_d)$ and
$t=t^{(k)}(n,d)-e(K)$. 
If all $|V_i|\ge n/(2d)$ and $t=o(n^{k})$, we can balance them in
$s=O_{k,d}(\sqrt{t/n^{k-2}})$ moves. For any edge set $B$, we can
choose distinct moved vertices meeting at most $O_{k,d}(|B|s/n)$
edges of $B$.
More generally, let $K,K'$ differ by $s$ vertex moves. If $B_0$ avoids moved vertices,
then
\[
\mathcal N_L(K'\cup B_0)\le(1+O_L(s/n))\mathcal N_L(K\cup B_0).
\]

\end{lem}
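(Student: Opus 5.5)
The plan has three parts. First, use Lemma~\ref{lem-st} to bound the number of moves needed to balance the parts. Second, choose which vertices to move so that few edges of $B$ are disturbed. Third, compare copy counts vertex by vertex.

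\textbf{Number of moves.} Write $x_i=|V_i|$, so that $e(K)=S_k(x_1,\dots,x_d)=t^{(k)}(n,d)-t$ with $t=o(n^k)$. Lemma~\ref{lem-st} supplies balanced integers $n_1,\dots,n_d$ with $\sum n_i=n$ and $\max_i|x_i-n_i|=O_{k,d}(\sqrt{t/n^{k-2}})$. We then move vertices one at a time from parts with $x_i>n_i$ to parts with $x_j<n_j$. This uses exactly
\[
s=\tfrac12\sum_i|x_i-n_i|=O_{k,d}(\sqrt{t/n^{k-2}})
\]
moves, and each vertex is moved at most once. If $t$ is so small that $\sqrt{t/n^{k-2}}<1$ but the parts are unbalanced, the additive $O(1)$ from Lemma~\ref{lem-st} has to be absorbed. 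We handle this exactly as in the end of that proof, using $t\ge \Omega(n^{k-2})$ whenever the partition is not already balanced.

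\textbf{Choosing the moved vertices.} In an oversized part $V_i$ we must pick $x_i-n_i$ vertices. By averaging, $\sum_{v\in V_i}d_B(v)\le k|B|$, and $|V_i|\ge n/(2d)$. So the $x_i-n_i$ vertices of smallest $B$-degree in $V_i$ have total $B$-degree at most
\[
(x_i-n_i)\cdot\frac{k|B|}{n/(2d)}=O_{k,d}(|B|s/n).
\]
Summing over the $d$ parts gives the bound on the number of edges of $B$ meeting moved vertices. This choice respects distinctness, since each vertex is chosen at most once.

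\textbf{Comparing copy counts.} Let $K'$ arise from $K$ by moving a set $X$ of $s$ vertices, with $B_0$ disjoint from $X$. Split the copies of $L$ in $K'\cup B_0$ into two kinds.
\begin{itemize}
\item Copies avoiding $X$. The induced subhypergraphs $K'-X$ and $K-X$ coincide, and $B_0$ is unchanged. So these copies are exactly the copies in $K\cup B_0$ avoiding $X$, and their number is at most $\mathcal N_L(K\cup B_0)$.
\item Copies using some $x\in X$. For each $x\in X$, bound the copies through $x$ in $K'\cup B_0$ by $O_L(\mathcal N_L(K\cup B_0)/n)$, then sum over the $s$ vertices.
\end{itemize}
The second bullet is the expected obstacle, since for general $B_0$ a vertex may a priori lie in disproportionately many copies. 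The plan is a swapping argument. The vertex $x$ is untouched by $B_0$, so $x$ has no $B_0$-edges in either $K$ or $K'$. Hence in $K'\cup B_0$ every edge of a copy through $x$ is a crossing edge of $K'$, and $x$ behaves like a generic vertex of its new part $V_j'$. Every vertex $y$ of $V_j'$ not covered by $B_0$ or $X$ is interchangeable with $x$, which gives an injection from copies through $x$ into copies through such a $y$. If $\Omega(n)$ such vertices $y$ exist, each with the same copy count in $K'\cup B_0$ as in $K\cup B_0$ up to the $O(s/n)$ relative change in part sizes, then averaging gives
\[
\#\{\text{copies through }x\}\le O(1/n)\,\mathcal N_L(K\cup B_0)\,(1+O(s/n)).
\]
Summing over $x\in X$ yields $\mathcal N_L(K'\cup B_0)\le(1+O_L(s/n))\mathcal N_L(K\cup B_0)$.

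If $B_0$ covers almost all of $V_j'$, the averaging above fails. In that case we instead compare directly with the Theorem~\ref{thm-c}-type polynomial counts. This uses Lemma~\ref{lem-Cs}: for each pattern $S\subseteq B_0$, the count $C(\cdot,S;L)$ changes by a relative $O(s/n)$ when the part sizes shift by $s$.
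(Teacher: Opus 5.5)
Your first two steps are the paper's. Lemma~\ref{lem-st} bounds the number of moves, and taking the lowest-$B$-degree vertices of each oversized part bounds the disturbed edges of $B$. Comparing with balanced integers $n_i$ instead of $n/d$, and absorbing the additive constant via $t=\Omega(n^{k-2})$ for unbalanced partitions, is actually cleaner than the paper.

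The gap is in the copy-count transfer. Your averaging step needs, for each untouched $y$, that the number of copies through $y$ in $K'\cup B_0$ is at most $(1+O(s/n))$ times the number in $K\cup B_0$. That is the rooted form of the very inequality you are proving, since copies through $y$ can also pass through moved vertices. As written, the argument is circular. Moreover, requiring $y\notin V(B_0)\cup X$ makes the route unusable exactly where the lemma is applied: there $|B_0|=q$ can be as large as $\delta n^{k-1}$, so $B_0$ may cover every unmoved vertex. Your fallback for that case is the paper's entire proof, which works in all cases and makes the case split unnecessary. To rely on it you must carry it out. Split the copies by their exact non-crossing edge set $S\subseteq B_0$, and note that the vertices of $S$ keep their parts because $B_0$ avoids moved vertices. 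Then the number of completions of each $S$ is a sum of nonnegative products of falling factorials in the part sizes, each changing by a factor $1+O_L(s/n)$ (Lemma~\ref{lem-Cs}, Theorem~\ref{thm-sl}).

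Your idea can still be turned into a complete and more elementary proof. You need neither the symmetric swap nor any restriction on $y$. Let $c'(z)$ be the number of copies through $z$ in $K'\cup B_0$, and let $N'=\mathcal N_L(K'\cup B_0)$. Take $x\in X$ in its new part $V'_j$ and any $y\in V'_j$. Every edge of a copy at $x$ lies in $K'$ because $x\notin V(B_0)$, and it stays crossing when $x$ is replaced by $y$. So the replacement maps copies through $x$ avoiding $y$ injectively to copies through $y$ avoiding $x$. Hence $c'(x)\le c'(y)$, and averaging over $y$ gives $c'(x)\le lN'/|V'_j|$. Copies avoiding $X$ are identical in the two graphs, so $N'\le\mathcal N_L(K\cup B_0)+s\,l\,N'/\min_j|V'_j|$. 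Absorbing, with parts of linear size and $s=o(n)$, gives $N'\le(1+O(s/n))\,\mathcal N_L(K\cup B_0)$. This version uses no polynomial counting and no property of $L$. The paper's route instead makes the dependence on part sizes explicit through the polynomials $C(\cdot,S;L)$ it manipulates elsewhere.
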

\begin{proof}
Write $n_i=|V_i|$ and $n_i=n/d+y_i$, so $\sum_{i\in[d]} y_i=0$.
The number of crossing edges is \[e(K)=\sum_{1\le i_1<\cdots<i_k\le d}
 n_{i_1}\cdots n_{i_k}.
\] Since $t=o(n^{k})$, by Lemma \ref{lem-st}, we have $|y_i|=O_{k,d}(\sqrt{t/n^{k-2}}).$

Order the initial sizes $|V_1|,\ldots,|V_d| $ from largest to smallest.
Move vertices only from a part of size large than $n/d$ above  to a part of size  small than $n/d$.
Then we can move at most $\sum_{i\in[d]} |y_i|\le  O_{k,d}(\sqrt{t/n^{k-2}})$ vertices balance them.

Since all $|V_i|\ge n/(2d)$,
every part always has at least $n/(2d)$ initial vertices not  moved.
On the other hand, 
since $\sum_{x\in V(K)} d_B(x)=k|B|$, there exist vertex $v$ with
$d_B(v)\le2dk|B|/n$.
Choose such a vertex at every move.
The number of edges of $B$ meeting at least one chosen vertex is
at most the sum of these degrees, which is $O_{k,d}(|B|s/n)$.
Repeated counting of an edge is not change this upper bound.

 Fix the exact non-crossing edge set $S\subseteq B_0$ of a copy of $L$ in $K'$.
Every vertex in $S$ keeps its part since $B_0$ avoids the moved vertices.
Each part has linear size and changes by at most  $s=o(n)$, use Lemma \ref{lem-Cs} for  such  $S$, $C(n_1,\dots,n_d,S;L)\le (1+O_L(s/n))C(n,S;L)$. 
 Sum  over exact
sets $S$ gives
\[
 \mathcal N_L(K'\cup B_0)
 \le(1+O_L(s/n))\mathcal N_L(K\cup B_0).
\]

\end{proof}
\begin{lem}  
\label{lem-mixed-sampling}
Let $G_0$ be an $n$-vertex $k$-graph and $B_0\subseteq E(G_0)$.
Let $\mathcal A$ be a family of missing edges with
$|\mathcal A|= a_0n^k$, where $a_0>0$ is fixed. Choose a uniform $u$-element subset
$B_1\subseteq\mathcal A$.
Then the expected numbers of copies of $L$ in $G_0\cup B_1$ using
at least two edges of $B_1$, and using both an edge of $B_0$
and an edge of $B_1$, are respectively
\[
O_{L,a_0}(u^2n^{l-2k}),\qquad
O_{L,a_0}(u|B_0|n^{l-2k}).
\]
\end{lem}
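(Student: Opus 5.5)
The plan is a first-moment computation. We split copies of $L$ in $G_0\cup B_1$ according to the set $S$ of their edges that come from $B_1$ and the set $S_0$ of their edges that come from $B_0$. The statement is used later with $G_0$ of the form $K\cup B_0$, where $K$ is a complete $d$-partite $k$-graph with linear parts; we argue in that setting and use only the trivial bound $n^{l-|V(S\cup S_0)|}$ for the number of ways to complete a fixed edge configuration to a copy of $L$.

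\textbf{Edge probabilities.} Since $B_1$ is a uniform $u$-subset of $\mathcal A$ with $|\mathcal A|=a_0n^k$, a fixed set of $j$ distinct edges of $\mathcal A$ lies in $B_1$ with probability
\[
\binom{|\mathcal A|-j}{u-j}\Big/\binom{|\mathcal A|}{u}\le\Bigl(\frac{u}{a_0n^k}\Bigr)^{j}.
\]
Linearity of expectation then gives
\[
\mathbb E[\#\text{copies using }\ge2\text{ edges of }B_1]\le\sum_{j\ge2}\ \sum_{T}\Bigl(\frac{u}{a_0n^k}\Bigr)^{j}N(T),
\]
where $T$ runs over $j$-sets of edges of $\mathcal A$ and $N(T)$ is the number of copies of $L$ in $G_0\cup T$ that contain $T$.

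\textbf{Two edges from $B_1$.} This is the main bound to establish. Fix an injective labelled image of $j\ge2$ edges $e_1,\dots,e_j$ of $L$, and let $v_T=|V(e_1\cup\cdots\cup e_j)|$. The number of completions to a labelled copy is at most $n^{l-v_T}$. The number of choices of $T\subseteq\mathcal A$ with a given intersection pattern is at most $n^{v_T}$. Hence the $j$-th term is $O((u/n^k)^j n^{l})$. For $j=2$ this equals $O(u^2n^{l-2k})$ directly. For $j\ge3$ the term is $O(u^2n^{l-2k}\cdot(u/n^k)^{j-2})$, which is at most the $j=2$ bound since $u\le a_0n^k$. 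There are only boundedly many patterns (depending on $L$), so the sum is $O_{L,a_0}(u^2n^{l-2k})$. In the expanded setting the point to note is that two edges of $L$ share at most one vertex, so $v_T\ge 2k-1$. Even the weaker estimate $v_T\le 2k$ suffices here, because we bound by $n^{l}$ in total rather than attempting any saving.

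\textbf{Edges from both $B_0$ and $B_1$.} Fix an edge $f\in B_0$ and an edge $e\in B_1$ used by the copy. The number of copies of $L$ containing both $f$ and $e$ is at most $n^{l-|f\cup e|}$. We then sum over $e\in\mathcal A$, weighting each by $u/(a_0n^k)$.
\begin{itemize}
\item If $e$ and $f$ are disjoint, the count is $n^{l-2k}$ per pair. Summing over $a_0n^k$ choices of $e$ with weight $u/(a_0n^k)$ gives $u\,n^{l-2k}$ per $f$.
\item If $|e\cap f|=1$, there are $O(n^{k-1})$ choices of $e$ and the count is $n^{l-2k+1}$ per pair. The product is again $O(n^{l-k})$, which after weighting contributes $O(u\,n^{l-2k})$ per $f$.
\end{itemize}
Summing over $f\in B_0$ gives $O_{L,a_0}(u|B_0|n^{l-2k})$. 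Copies that use several edges of $B_0$ or several edges of $B_1$ are covered by the same pattern count, since each additional fixed edge only lowers the free-vertex exponent. This completes the second estimate.

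\textbf{Main obstacle.} The delicate point is ensuring that no pattern with overlapping edges beats the exponent. Overlaps reduce $v_T$, which gains a factor of $n$ in the number of completions but costs the same factor in the number of choices of $T$. The product $n^{v_T}\cdot n^{l-v_T}=n^l$ is therefore invariant, and the bound holds uniformly across patterns.
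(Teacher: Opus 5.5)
Your proposal is correct and follows essentially the paper's argument. You bound the total number of labelled potential copies by $O_L(n^l)$ (respectively $O_L(n^{l-k})$ through a fixed $f\in B_0$) and multiply by the probability $O_{a_0}\bigl((u/n^k)^2\bigr)$ (respectively $O_{a_0}(u/n^k)$) that the relevant edges of $\mathcal A$ are selected. The differences are only organizational: you sum over exact $j$-sets and dominate the $j\ge3$ terms by $j=2$ using $u\le|\mathcal A|$, and you split the mixed case by $|e\cap f|$, whereas the paper simply union-bounds over pairs of edges of a fixed potential copy.
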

\begin{proof}
We  next count labelled copies, 
which changes the bound above by at most the fixed factor $|\operatorname{Aut}(L)|$.
There are $O_L(n^l)$ labelled potential copies which
 choose two of its edges whose images lie in $\mathcal A$.
For distinct edges $f_1,f_2\in\mathcal A$, uniform sampling  gives
\[
 \Pr(f_1,f_2\in B_1)
 =\frac{\binom{|\mathcal A|-2}{|B_1|-2}}{\binom{|\mathcal A|}{|B_1|}}
 =\frac{|B_1|(|B_1|-1)}{|\mathcal A|(|\mathcal A|-1)}=O_{a_0}(u^2/n^{2k})
\]
when $|B_1|\ge2$. The probability is zero otherwise. 
For a fixed potential copy, take a union bound over the choices of
two of its edges. If the copy uses at least two selected edges,
at least one of these events occurs. Requiring its other edges to
be present can only reduce its probability.
Summing over all potential copies gives the first bound.
It is zero when $u<2$.

For the second bound, fix $e\in B_0$.
To count labelled potential copies containing $e$, choose which edge
of $L$ maps to $e$. Map its $k$ labelled vertices in at most $k!$ ways.
There are then at most $n^{l-k}$ maps for the other vertices.
Thus there are $O_L(n^{l-k})$ potential copies containing $e$.
Each has at most $e(L)$ edges with image in $\mathcal A$.
A fixed such edge is selected with probability
$|B_1|/|\mathcal A|=O_{a_0}(u/n^k)$.
Sum over these edges, the copies, and $e\in B_0$.
This gives $O_{L,a_0}(u|B_0|n^{l-2k})$.
A copy containing several  edges of $B_0$ or edges of  $B_1$ may have been counted
more than once. This is harmless because we need an upper bound.
\end{proof}
\section{The universal equality range}\label{s3}
We prove Theorems~\ref{thm-tlnq1} and ~\ref{thm-ht} by counting
copies and comparing complete partitions. We assume that $d>k\ge3$ be integers and  $L\in \mathcal{F}(k,d)$. and
$L\in\mathcal F(k,d)$.
The following construction gives an exact result in the smaller
range relevant to Theorem~\ref{thm-ht}. It extends the fixed-vertex
construction for expanded cliques in the proof of Mubayi's
Theorem 7~\cite{Mubayi2013}.

\begin{lem}
\label{lem-anchored}\label{thm-exact-t}
Let $K=K^{(k)}(V_1,\ldots,V_d)$, fix $i$ and $y\notin V_i$.
Let $B$ consist of edges containing $y$, two vertices in $V_i$,
and at most one vertex in each other part. Then
\[
\mathcal N_L(K\cup B)=\sum_{e\in B}\mathcal N_L(K\cup\{e\}).
\]
For every sufficiently large $n$
and $1\le q\le\rho n^{k-1}$ with $0<\rho<1/(2d^{k-1})$, we have $t_L(n,q)=qc(n,L)$.
\end{lem}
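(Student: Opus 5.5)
The plan has two parts: first prove the additivity identity for the star-like family $B$, then use it to realize $t_L(n,q)=qc(n,L)$ with $K$ the balanced Tur\'an graph.

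For the identity, fix a copy of $L$ in $K\cup B$ and let $S\subseteq B$ be the set of its non-crossing edges. As in the proof of Theorem~\ref{thm-c}, colour each original vertex of $F$ by the part containing its image. The expanded edges mapped to crossing edges of $K$ have properly coloured original endpoints. An expanded edge mapped to some $e\in B$ has its two original vertices either both in $e\cap V_i$, or one of them at $y$ and the other in $V_i$ or in another part.

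The key step is to show $|S|\le 1$. Suppose two edges $e,e'\in S$. Every edge of $B$ contains $y$, and two edges of $L=\mathcal K_k(F)$ meet in at most one vertex, which must then be an original vertex. So $e\cap e'=\{y\}$ in the image, and $y$ is the image of an original vertex $w$ of $F$. For each edge of $B$ used, the part $V_i$ contains two of its vertices, and at most two vertices of an expanded edge are original. Hence either both vertices in $V_i$ are the original pair, or one of them is a degree-$1$ expansion vertex. I will consider the colouring of $F$ obtained by recolouring $w$: give it the colour $i$ or a free colour. Then at most one original edge can be monochromatic, namely one whose two endpoints lie in $e\cap V_i$. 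To rule out several such edges, I will use the fact that the monochromatic original edges must be critical. I would then argue, as in Theorem~\ref{thm-c}, that exactly one original edge is monochromatic, so exactly one expanded edge is non-crossing.

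This step is the delicate one. An edge of $B$ whose original endpoints are $y$ and a vertex of $V_i$ is ``crossing-like'' for the colouring but is not in $K$. I expect to handle it by the counting observation that $y$ lies in a part $V_j$ with $j\neq i$, so such an edge still forces a monochromatic pair among the original vertices inside $V_i$, unless it is a single edge. Once every copy is shown to contain exactly one edge of $B$, the copies are partitioned according to that edge, and the identity follows.

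For the second statement, take $K=T^{(k)}(n,d)$, fix $y\in V_j$ and some $i\ne j$. The number of edges of the prescribed form is $\binom{|V_i|}{2}\prod$ over the choices of $k-3$ other parts, which is roughly $\binom{d-2}{k-3}(n/d)^{k-1}/2\ge n^{k-1}/(2d^{k-1})>\rho n^{k-1}\ge q$ for large $n$. Choose $q$ such edges, each with a simple pattern. By the identity and Theorem~\ref{thm-c}, $\mathcal N_L(K\cup B)=qc(n,L)$. Since $T^{(k)}(n,d)$ is the unique extremal graph (Theorem~\ref{extremal-graph}), this gives $t_L(n,q)\le qc(n,L)$. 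For the lower bound, any added set of $q$ edges yields at least the sum of the single-edge counts, because the copies using exactly one added edge are disjoint across edges, and each single-edge count is at least $c(n,L)$, the minimum over patterns. Hence $t_L(n,q)=qc(n,L)$.
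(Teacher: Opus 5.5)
Your plan has the right overall shape: show that every copy in $K\cup B$ uses exactly one edge of $B$, then take $K=T^{(k)}(n,d)$. However, the central step is not actually established, and the route you sketch for it does not work. Recolouring $w$ and appealing to criticality lead nowhere. The inference ``exactly one original edge is monochromatic, so exactly one expanded edge is non-crossing'' is a non sequitur here. As you note yourself, an edge of $B$ can carry a properly coloured original pair (one of its two $V_i$-vertices being an expansion vertex), so it is non-crossing without coming from a monochromatic edge. Your list of possible original pairs of an edge of $B$ is also incomplete: the pair may avoid $y$ and meet $V_i$ at most once.

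The missing idea is short. Since $\chi(F)=d+1$, the part-colouring of the original vertices has a monochromatic edge $uv$. Its expansion is non-crossing, hence lies in $B$. The only part meeting an edge of $B$ twice is $V_i$, so $u,v$ map to the two $V_i$-vertices, and $y\notin V_i$ is a degree-one expansion vertex of that edge. Since every edge of $B$ contains $y$, no second edge of $B$ can occur in the copy. Equivalently, from your own observation: if $|S|\ge 2$ then $y$ is the image of an original vertex in every edge of $S$. So no edge of $S$ has its $V_i$-pair as its original pair, and the part-colouring is a proper $d$-colouring of $F$, a contradiction.

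In the second part, choosing edges ``each with a simple pattern'' does not give $\mathcal N_L(K\cup B)=qc(n,L)$ exactly. Theorem~\ref{thm-c} only yields $\alpha_Ln^{l-k}+O_L(n^{l-k-1})$ for simple patterns. When $d\nmid n$, different ordered simple patterns can differ at lower order: by Lemma~\ref{lem-cs}, interchanging the doubled part with a part of different size changes the count by about $\zeta_L n^{l-k-1}$. Your family has an arbitrary doubled part $i$ and ranges over all $\binom{d-2}{k-3}$ choices of the remaining parts, so it may contain edges whose single-edge count exceeds $c(n,L)$.

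You need to fix one ordered simple pattern attaining $c(n,L)$. Take $V_i$ to be its doubled part, put $y$ in one of its singly used parts, and use only edges of that pattern. There are $\bigl(1/(2d^{k-1})+O(n^{-1})\bigr)n^{k-1}$ such edges, which is exactly why the hypothesis $\rho<1/(2d^{k-1})$ is needed. Your lower bound $t_L(n,q)\ge qc(n,L)$ is fine as written.
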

\begin{proof}
Let $F$ be the original graph of $L$. Note that $K$ is $L$-free, otherwise,  any copy would give a proper $d$-coloring of $F$.
Thus every copy $L$ in $K\cup B$ uses at least one edge of $B$.

We next show that it cannot use two edges. Consider a copy of $L$ in $K\cup B$.
The parts give a $d$-coloring of the original vertices of $F$, and this coloring is not proper. Choose an edge $uv$ whose endpoints
lie in the same part. Its expansion belongs to $B$.
Since the only repeated part on that edge is $V_i$, so $u$ and $v$
map to its two vertices in $V_i$.
All other vertices on this expanded edge represent its expanded vertices.
Since $y\notin V_i$, the vertex $y$ is an expanded vertex of this
edge. It has degree one in the copy. Every edge of $B$ contains
$y$, so a second edge of $B$ in the copy would give $y$ degree at least two.
This is impossible. 

Any  copy in $K\cup B$ therefore uses exactly one edge of $B$. For each $e\in B$, the copies using that edge are exactly the copies
in $K\cup\{e\}$. These families are disjoint and include all copies.
It follows that
\[
 \mathcal N_L(K\cup B)=\sum_{e\in B}\mathcal N_L(K\cup\{e\}).
\]

Now take $K=T^{(k)}(n,d)$. Fix an ordered simple pattern that attains
$c(n,L)$, such as $(2,1,\dots,1,0\dots,0)$.
Choose $y$ in one of its  parts,  such as $V_2$.
Restrict the $B$  to this pattern. The number of edges of this pattern is 
\[
 \binom{|V_1|}{2}
 \prod_{\substack{3\le i_1\le \cdots\le i_{k-3}\le d}}
 |V_{i_1}|\cdots |V_{i_{k-1}|}
 =\left(\frac{1}{2d^{k-1}}+O_L(n^{-1})\right)n^{k-1}.
\]
Choose any $q$ edges from this family. Each has 
$c(n,L)$ copies of $L$, so the resulting graph has exactly $q c(n,L)$ copies.

By Theorem~\ref{extremal-graph}, every graph in the definition
of $t_L(n,q)$ contains $T^{(k)}(n,d)$ and has $q$ added edges.
For each added edge, the copies using it and no other added edge
number at least $c(n,L)$. 
Thus $t_L(n,q)\ge qc(n,L)$, which proves equality.
\end{proof}

\begin{proof}[Proof of Theorem~\ref{thm-tlnq1}]
By Theorem~\ref{extremal-graph}, the extremal graph is
$T=T^{(k)}(n,d)$. We know that
$t_L(n,q)\ge qc(n,L)$. 
For the upper bound, fix an ordered pattern that attains $c(n,L)$.
Let $\mathcal A$ contain all edges of this pattern.
If the pattern is $(a_1,\ldots,a_d)$, its size is exactly
$\prod_i\binom{|V_i|}{a_i}$.
The sum of $a_i$ is $k$, and every balanced part has linear size.
Thus $\prod_i\binom{|V_i|}{a_i}=\Theta_L(n^k)$.
Choose $\delta_0$ so that
$q\le\delta_0n^k$ implies $q\le|\mathcal A|$ for large $n$.

Take a uniform subset $B\subseteq \mathcal A$ of size $q$. By Lemma~\ref{lem-mixed-sampling}, at least one choice of $B$ such that the  number of copies using at
least two edges of $B$ is $O_L(q^2n^{l-2k})$.
No copy uses zero added edges, since $T$ is $L$-free. The  number of copies using exactly one edge of $B$ is $qc(n,L)$.
This choice proves the upper bound.
Since $c(n,L)=\Theta_L(n^{l-k})$, $q^2n^{l-2k}=O_L(q/n^k)qc(n,L)$. The relative error $q/n^k$ tends to zero when $q=o(n^k)$.
\end{proof}

\begin{proof}[Proof of Theorem~\ref{thm-ht}]
Let $H$ be a $k$-graph with $t^{(k)}(n,d)+q$ edges and $\N_L(H)=h_L(n,q)$. Let $q\le \delta_1n^{k-1}$, by Theorem~\ref{thm-tlnq1},  
\[
 \mathcal N_L(H)\le t_L(n,q)=O_L(qn^{l-k})=O_L(\delta n^{l-1}).
\] 
Choose $K(V_1,\cdots,V_d),B,M$ as in Remark \ref{remark-mb}, 
then all part sizes are $n/d+o(n)$.
And $|B|=O_L(q)$ by Lemma \ref{lem-b}.
Let
$
 t:=t^{(k)}(n,d)-e(K)\ge0$.
 Comparing the edge counts of $H$ and $K$, \eqref{eq-mb} gives
$ |B|-q=t+|M|.$
We show that $t+|M|$ must be zero.

Consider the $k$-graph $K\cup B$.
Every copy contains an edge $f\in M$ is also contains an edge $e\in B$, because $K$ is $L$-free.
And $e,f$ meet in at most one  vertex since any two edges in $L$ meet in at most one vertex.
If they meet in one vertex, their union fixes $2k-1$ vertices of a copy.
If they are disjoint, their union fixes $2k$ vertices.
In either case the number of copies containing the pair $\{e,f\}$ is at most
$C_1n^{l-2k+1}$ for a constant $C_1>0$. Sum over $B\times M$ to obtain
\[0\le\mathcal N_L(K\cup B)-\mathcal N_L(H)
 \le C_1|B||M|n^{l-2k+1}.\]
Indeed, a copy may contain several such pairs.

Suppose $t+|M|>0$.
Since $t\le|B|=O_L(q)$ and $q\le\delta n^{k-1}$,
Lemma~\ref{lem-integer-balancing} balances the partition in
$s=O_{L,d}(\sqrt{t/n^{k-2}})=O_L(\sqrt n)$ moves.
When $t=0$, take $s=0$ and keep the initial partition.
Choose the moved vertices by the degree rule in that lemma. Denote the balanced crossing graph $K'$.
Suppose that  $r$ is the number of bad edges meeting these vertices, then
 $r\le O_L(\frac{|B|s}{n})
\le O_L(|B|t/n^{k-1})
\le O_L(\delta t)$.
Take $\delta$ smaller so that  $r\le t+|M|$.
Delete exactly $t+|M|$ bad edges of $B$, including all such $r$ affected edges.
This is possible because $t+|M|=|B|-q\le|B|$.
Let $B_0$ be the remained bad edges.
Then $|B_0|=q$, and every vertex of every  edge of $B_0$ keeps its part.
In particular, the graph $H':=K'\cup B_0$ has exactly $t^{(k)}(n,d)+q$ edges.
For each deleted bad edge $e$, 
Lemma~\ref{lem-cs} gives at least $\Theta_L(n^{l-k})$ copies
in $U$ using $e$ as their only bad edge.
Thus there exist constant $C_0>0$ such that
\[
 \mathcal N_L(K\cup B_0)
 \le\mathcal N_L(K\cup B)-C_0(t+|M|)n^{l-k}.
\]

Count transfer under the $s$ moves using Lemma \ref{lem-count-transfer}, this  gives  exist a constant $C_2>0$, 
\begin{align*}
 \mathcal N_L(H')-\mathcal N_L(H)
 \le C_1(|B||M|n^{l-2k+1})-C_0(t+|M|)n^{l-k}+
   C_2(|B|s n^{l-k-1}).
\end{align*}
Here the first error comes from  missing edges. The second error 
comes from  deleted edges, and
the third comes from moving vertices after the deletion.
We bound them separately:
\[
 C_1|B||M|n^{l-2k+1}
 =\frac{|B|}{n^{k-1}}|M|n^{l-k}
 \le C_1\delta |M|n^{l-k}.
\]
Using $s= O_{k,d}(\sqrt{t/n^{k-2}})$,
\[
 C_2|B|s n^{l-k-1}
 \le O_L(C_2\frac{|B|}{n^{k-1}}t n^{l-k})
 \le O_L(C_2\delta t n^{l-k}).
\]
Choose $\delta$ so that their combined coefficient is less than $C_0/2$.
Then $\mathcal N_L(H')<\mathcal N_L(H)=h_L(n,q)$ since $t+|M|>0$.
This contradicts minimality of $h_L(n,q)$.

Therefore $t+|M|=0$, and so $t=0$ and $M=\varnothing$.
That means $K$ is balanced and  $H$ contains $T^{(k)}(n,d)$.
Thus $h_L(n,q)=\N_L(H)\ge t_L(n,q)$.
The definition always gives $h_L(n,q)\le t_L(n,q)$.
Therefore $h_L(n,q)=t_L(n,q)$.
Finally take $\delta<1/(2d^{k-1})$.
Lemma~\ref{lem-anchored} gives $h_L(n,q)=t_L(n,q)=qc(n,L)$.
This also proves $q_L(n)=\Omega_L(n^{k-1})$.
\end{proof}

\section{Thresholds of order $n^{k-1}$}\label{s4}
We give a sufficient condition for $q_L(n)=\Theta_L(n^{k-1})$.
The construction moves one vertex between balanced parts and
adds edges through a fixed vertex.
We then apply this result to expansions of $F_{s,t,d}$.
 The next Theorem  gives the Theorem \ref{thm-anchor-upper}.
\begin{thm}Let  $d > k \ge 3$ be integers, and let $L\in \mathcal{F}(k,d)$. 
Suppose
$ |\zeta_L|>\frac{4d(d-k+1)}{k-2}\alpha_L$.
Let $\theta_1=2\alpha_L\binom{d-2}{k-2}/(d^{k-2}|\zeta_L)|$ and $\theta_2=\binom{d-2}{k-3}/(2d^{k-1})$.
For all sufficiently large $n$, if $q=\theta n^{k-1}$ with $\theta_1 <\theta<\theta_2$, then
\[
h_L(n,q)<q c(n,L)\le t_L(n,q).
\]
In particular, $q_L(n)=\Theta_L(n^{k-1})$.
\end{thm}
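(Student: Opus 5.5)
The upper bound $qc(n,L)\le t_L(n,q)$ is already known: it is the lower half of Theorem~\ref{thm-tlnq1}, since $T^{(k)}(n,d)$ is the unique extremal graph and every added edge lies in at least $c(n,L)$ copies using no other added edge. So the work is to construct, for $q=\theta n^{k-1}$ with $\theta_1<\theta<\theta_2$, an $n$-vertex $k$-graph with $t^{(k)}(n,d)+q$ edges and fewer than $qc(n,L)$ copies of $L$.

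\textbf{Construction.} Start from the balanced partition $V_1,\dots,V_d$ and move one vertex $z$ from some part into another. Which direction depends on the sign of $\zeta_L$, so that the anchored pattern part shrinks when $\zeta_L>0$ and grows when $\zeta_L<0$. Let $K'$ be the resulting complete $d$-partite $k$-graph. By the expansion in Lemma~\ref{lem-st}, or directly, moving one vertex loses
\[
t^{(k)}(n,d)-e(K')=\binom{d-2}{k-2}(n/d)^{k-2}+O(n^{k-3})
\]
crossing edges. To compensate, add $q+t$ edges, all anchored at a fixed vertex $y$ as in Lemma~\ref{lem-anchored}. Each such edge contains $y$, two vertices of one part $V_i$ and one vertex in each of $k-3$ other parts. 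We choose $V_i$ to be the part whose size changed so as to lower $C_{\boldsymbol a}$. Lemma~\ref{lem-anchored} is valid for any complete partition, so copies add over the anchored edges. Each anchored edge then carries $C_{\boldsymbol a}(n_1,\dots,n_d)$ copies, which by Lemma~\ref{lem-cs} equals $c(n,L)-|\zeta_L|n^{l-k-1}+O(n^{l-k-2})$ for a suitable pattern. The number of available anchored edges is roughly $\binom{d-2}{k-3}n^{k-1}/(2d^{k-1})$ up to lower order. This is where $\theta_2$ comes from: $q+t=\theta n^{k-1}+O(n^{k-2})$ edges must fit. The moved vertex $z$ must not be $y$ and must avoid the chosen edges, which is easy.

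\textbf{Comparison.} The total count is
\[
(q+t)\bigl(c(n,L)-|\zeta_L|n^{l-k-1}+O(n^{l-k-2})\bigr),
\]
to be compared with $qc(n,L)$. The extra cost is $t\,c(n,L)\approx \binom{d-2}{k-2}d^{-(k-2)}n^{k-2}\alpha_L n^{l-k}$. The saving is $q|\zeta_L|n^{l-k-1}=\theta|\zeta_L|n^{l-2}$. The saving wins when
\[
\theta>\alpha_L\binom{d-2}{k-2}\big/\bigl(d^{k-2}|\zeta_L|\bigr).
\]
The stated $\theta_1$ has an extra factor $2$ for slack absorbing error terms. The hypothesis $|\zeta_L|>\frac{4d(d-k+1)}{k-2}\alpha_L$ is exactly what ensures $\theta_1<\theta_2$, using
\[
\binom{d-2}{k-2}\Big/\binom{d-2}{k-3}=\frac{d-k+1}{k-2}.
\]
I would verify that identity to confirm the constants match.

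\textbf{Main obstacle.} The delicate step is getting the sign and pattern bookkeeping right in Lemma~\ref{lem-cs}: which part gains the vertex, which part holds the pair of the anchored edge, and that the first-order change really is $\pm\zeta_L n^{l-k-1}$ per edge with errors $O(n^{l-k-2})$ uniformly. Those errors total $O(qn^{l-k-2})=O(n^{l-3})$, which is negligible against $n^{l-2}$. If $\zeta_L$ has the unfavorable sign for one direction of move, the opposite move flips it, since $A_1=\pm1$. Finally, $q_L(n)=\Theta_L(n^{k-1})$ follows by combining this with Theorem~\ref{thm-ht}: $h_L(n,q)<t_L(n,q)$ at $q=\lfloor\theta n^{k-1}\rfloor$ gives $q_L(n)<\theta n^{k-1}$.
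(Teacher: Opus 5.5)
Your construction matches the paper's: move one vertex into the doubled part $V_i$ (if $\zeta_L<0$) or out of it (if $\zeta_L>0$), add $q+\tau$ edges through a fixed $y\notin V_i$ with a pair in $V_i$, and use Lemma~\ref{lem-anchored} for additivity and Lemma~\ref{lem-cs} for the per-edge saving $|\zeta_L|n^{l-k-1}$. One step, however, is wrong as stated, and it is exactly where the factor $2$ in $\theta_1$ comes from; you attribute that factor to slack instead.

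Moving a vertex from $V_a$ to $V_b$ changes $e(K)$ by $(|V_a|-|V_b|-1)$ times the $(k-2)$-th elementary symmetric function of the other $d-2$ part sizes. So the loss is $\binom{d-2}{k-2}(n/d)^{k-2}$ only when $|V_a|=|V_b|$. It is twice that when the vertex goes from a smaller part to a larger one.

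You cannot always avoid the larger loss. By Lemma~\ref{lem-cs}, to first order the single-edge count depends only on the deviation $A_1$ of the doubled part. Hence a pattern attaining $c(n,L)$ has its pair in a largest part when $\zeta_L<0$, and in a smallest part when $\zeta_L>0$. A saving of order $n^{l-k-1}$ per edge then requires that part to reach size $\lceil n/d\rceil+1$ (respectively $\lfloor n/d\rfloor-1$). Growing a part of size $\lfloor n/d\rfloor$ instead gives $A_1=0$ relative to the minimizing configuration, and so no first-order saving. When $n\equiv 1\pmod d$ and $\zeta_L<0$ (or $n\equiv -1\pmod d$ and $\zeta_L>0$), the doubled part is the unique largest (smallest) part. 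The move is then forced between unequal parts, and $\tau=2\binom{d-2}{k-2}(n/d)^{k-2}+O(n^{k-3})$.

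The correct bound is therefore $\tau\le 2\binom{d-2}{k-2}(n/d)^{k-2}+O(n^{k-3})$, as in the paper. Your comparison then needs $\theta>2\alpha_L\binom{d-2}{k-2}/(d^{k-2}|\zeta_L|)=\theta_1$, not half of it. The error terms are $O(n^{l-3})$ against a main term of order $n^{l-2}$, so they need no slack at all.

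Since the theorem assumes $\theta>\theta_1$, your argument goes through once this is fixed. The binomial identity still shows that the hypothesis on $|\zeta_L|$ is equivalent to $\theta_1<\theta_2$. You should also state explicitly that $V_i$ is the doubled part of a pattern attaining $c(n,L)$. The phrase ``for a suitable pattern'' leaves open exactly the choice that matters.
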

\begin{proof}
Since
 $\binom{d-2}{k-2}/\binom{d-2}{k-3}
 =\frac{d-k+1}{k-2}$, $\theta_1\le \theta_2$.
The range of $q$ is nonempty. Suppose that $U_1,\ldots,U_d$ be the  balanced parts of $T^{(k)}(n,d)$.
Choose an ordered pattern $\boldsymbol a$  that attains $c(n,L)$, and  let $i$ be its
doubled part. 
Suppose we interchange parts $i$ and $j$ in the pattern.
By Lemma~\ref{lem-cs}, the change in the count is
\[
 \zeta_L(n_j-n_i)n^{l-k-1}+O_L(n^{l-k-2}).
\]
If $\zeta_L<0$, minimality of $c(n,L)$ forces $n_i$ to be a largest part.
If $\zeta_L>0$, it forces $n_i$ to be a smallest part.
For example, if $\zeta_L<0$ and some $n_j>n_i$, then balanced sizes
give $n_j-n_i=1$. 

For $\zeta_L<0$, move one vertex from another part $j$ into $i$.
For $\zeta_L>0$, move one vertex from $i$ into another part $j$.
In either case, by Lemma \ref{lem-st}, the loss of crossing edges is
\[\tau\le 2\binom{d-2}{k-2}({n\over d})^{k-2}+O_{k,d}(n^{k-3}).
\]
The loss is positive even if the initial two sizes are equal.
This proves both bounds on $\tau$.
Let  $K'$ be its new
complete $d$-partite $k$-graph. Thus $e(K')=t^{(k)}(n,d)-\tau$.

Fix $y$ outside part $i$. Take edges containing $y$, two vertices
of part $i$, and $k-3$ vertices in distinct other parts.
The number of these edges in $K'$ is 
\[{|U_i|-1\choose 2}
\sum_{\substack{S\subseteq[d]\setminus\{i,j\}\\|S|=k-3}}
\prod_{h\in S}|U_h|=
 \left(1+O_L(n^{-1})\right)\binom{d-2}{k-3}({n\over d})^{k-1}/2=\left(1+O_L(n^{-1})\right)\theta_2n^{k-1}.
\]
It is larger
than $q+\tau$ for large $n$ since $\tau=O_L(n^{k-2})$ and $q=
\theta n^{k-1}$ with $\theta<\theta_2$. 
Choose $q+\tau$ such edges and call the set $B$.

Let $H=K'\cup B$, then  $e(H)=t^{(k)}(n,d)+q$. By Lemma~\ref{lem-anchored}, every copy in $H$ uses one edge of $B$.
 Using Lemma \ref{lem-cs} for each $e\in B$, we have \begin{align}\label{eq-anchor-saving}\N_L(H)=\sum_{e\in B}\mathcal N_L(K'\cup\{e\})=|B|(c(n,L)-|\zeta_L|n^{l-k-1}+O_L(n^{l-k-2})).\end{align}
Since 
$c(n,L)=\alpha_Ln^{l-k}+O_L(n^{l-k-1})$ and $q=
\theta n^{k-1}$ with $\theta>\theta_1$, we get
\begin{align*}
 \mathcal N_L(H)-qc(n,L)
 &=-|\zeta_L|q n^{l-k-1}+\tau\alpha_L n^{l-k}
   +O_L(\tau n^{l-k-1}+qn^{l-k-2})\\
 &\le\bigl(-|\zeta_L|\theta+2\binom{d-2}{k-2}d^{-(k-2)}\alpha_L\bigr)n^{l-2}
   +O_L(n^{l-3})<0.
\end{align*}
On the other hand, Theorem~\ref{thm-tlnq1} gives
$t_L(n,q)\ge qc(n,L)$. Hence $q_L(n)<q=O_L(n^{k-1})$.
Theorem~\ref{thm-ht} gives the  lower bound $q_L(n)\ge O_L(n^{k-1})$.
\end{proof}


\begin{proof}[Proof of Corollary \ref{cor-multipartite-threshold}] Let $uv$ be the added  edge  in the part of size $s$ of $F_{s,t,d}$. 
The endpoints $u,v$ and one vertex from each other part form a $K_{d+1}$, so $\chi(F_{s,t,d})\ge d+1$. Also, $d+1$ colors suffice: use two colors on the first part with different colors on $u,v$, use one new color on each other part.

Deleting $uv$ leaves a complete $d$-partite graph. Now delete a different edge. We can still choose $u,v$ and one vertex from each other part so that the deleted edge is avoided. The chosen vertices still span $K_{d+1}$. Hence $uv$ is the only critical edge.

Denote $l:=|V(L)|$. The expansion has $s+(d-1)t$ original vertices and $(k-2)e(F_{s,t,d})$ new vertices.  Since $e(F_{s,t,d})-1=s(d-1)t+\binom{d-1}{2}t^2$,  \[
 l-k=s+(d-1)t-2
 +(k-2)\left(s(d-1)t+\binom{d-1}{2}t^2\right).
\] Every proper $d$-coloring of $F_{s,t,d}-uv$ uses its original parts as color classes. To see this, there are $d$ nonempty parts and only $d$ colors. Consider the leading polynomial $C_1$. The fixed pair in part $1$ receives $u,v$, and there are $s-2$ free original vertices in that part. If an original edge has an endpoint of color $1$, its expansion vertex cannot lie in part $1$. An edge avoiding color $1$ has $d-2$ allowed colors for its expansion vertex, one of which is $1$. There are $\binom{d-1}{2}t^2$ such edges, and each contributes an average $(k-2)/(d-2)$ new vertices to the class. Thus the  mean number of free vertices assigned to part $1$ is $$s-2+\frac{{k-2}}{d-2} \binom{d-1}{2}t^2=s-2+\frac{(k-2)(d-1)}{2}t^2.$$
The mean is taken over the labelled color assignments in the
counting proof. Each assignment has the same weight at the equal
vector, so  differentiating a monomial at $(1/d,\ldots,1/d)$ multiplies its value by $d$ times its exponent.
Therefore
\[
 \partial_1C(1/d,\ldots,1/d)
 =d\alpha_L\left(s-2+\frac{(k-2)(d-1)}{2}t^2\right).
\]

Fixing the added $k$-edge, 
there is $l-k$ free vertices. 
The other $d-1$ variables have equal mean exponents by symmetry.
Therefore
\[
\partial_2C(1/d,\ldots,1/d)
=d\alpha_L\frac{l-k-(s-2+\frac{(k-2)(d-1)}{2}t^2)}{d-1}.
\]
 This gives 
$\zeta_L=d\alpha_L\bigl((s-t)(1-(k-2)t)-2\bigr)$.
Apply Theorem~\ref{thm-anchor-upper}
, this proves \eqref{eq-multipartite-zeta}.
\end{proof}

\section{A sufficient condition for a larger equality range}\label{s5}
In this section, 
we prove Theorem~\ref{thm-stationary-link}.
We first define the link densities and count copies through a
fixed vertex, and 
 then use that condition to control the bad edges at each vertex.
The assumption $\zeta_L=0$ will be used when we balance the parts.

The \emph{link} of a vertex $x$ in a $k$-graph $H$ is
\[
 \operatorname{lk}_H(x)
 =\left\{S\in\binom{V(H)\setminus\{x\}}{k-1}:
                  S\cup\{x\}\in E(H)\right\}.
\]
Fix a partition $V_1,\ldots,V_d$ of the vertices other than $x$.
The pattern of a $(k-1)$-set $S$ is the vector
 $\rho(S)=(|S\cap V_1|,\ldots,|S\cap V_d|)$. 
The set of possible patterns is
\[
 \mathcal P_{k-1,d}
 =\left\{(a_1,\ldots,a_d)\in\mathbb Z_{\ge0}^d:
                         \sum_{i=1}^d a_i=k-1\right\}.
\]
Write $n_i=|V_i|$ and $n=\sum_{i\in[d]}n_i$.
There are
$\prod_{i\in[d]}\binom{n_i}{a_i}$ sets of pattern $\boldsymbol a$.
Let $w_{\boldsymbol a}(x)$ denote the \emph{density} of $(k-1)$-sets
of pattern $\boldsymbol a$ in $\operatorname{lk}_H(x)$, thus 
\[
 w_{\boldsymbol a}(x
 )
 =\frac{|\{S\in\operatorname{lk}_H(x):\rho(S)=\boldsymbol a\}|}
             {\prod_i\binom{n_i}{a_i}}.
\]
All parts considered below have linear size, so the denominator
is positive for sufficiently large $n$.
Let vector
$\boldsymbol w(x)=(w_{\boldsymbol a}(x))_{\boldsymbol a\in\mathcal P_{k-1,d}}$
 denote the density of link
edges of each pattern.
For any vector
$\boldsymbol w\in[0,1]^{\mathcal P_{k-1,d}}$, define
\begin{align}\label{eta}
\ell(\boldsymbol w)=d^{-(k-1)}
 \sum_{\boldsymbol a\in\mathcal P_{k-1,d}}
 \frac{w_{\boldsymbol a}}{\prod_i a_i!},\qquad
\eta(\boldsymbol w)=\ell(\boldsymbol w
 )-\binom{d-1}{k-1}d^{-(k-1)}.
\end{align}
We next define the polynomial that counts copies through $x$.
Fix $v\in V(L)$.
Consider all maps
$\varphi:V(L)\setminus\{v\}\to[d]$
such that every edge of $L$ avoiding $v$ receives $k$ distinct colors.
For $e\ni v$, let
$\rho(\varphi(e\setminus\{v\}))$ denote the number of vertices
of each color in $e\setminus\{v\}$.
Define the \emph{rooted counting polynomial} by
\begin{align}\label{eq-link-polynomial}
R_L(\boldsymbol w)=
\frac{1}{|\operatorname{Aut}(L)|d^{l-1}}
\sum_{v,\varphi}\prod_{\substack{e\in E(L)\\v\in e}}
w_{\rho(\varphi(e\setminus\{v\}))}.
\end{align}
Here the sum is over all such pairs $(v,\varphi)$.

After adding one vertex with link densities $\boldsymbol w$
to a  balanced $d$-partite $k$-graph,
$R_L(\boldsymbol w)$ is the leading coefficient of its copy
count as   following.

\begin{lem}\label{lem-link-count}
Let  $d > k \ge 3$ be integers, and let $L\in \mathcal{F}(k,d)$ with $l$ vertices. Add a vertex $x$ to $T^{(k)}(n,d)$ such that 
$lk_H(x)$ in the resulting graph $H$  has density $w_{\boldsymbol a}$
for  each pattern $\boldsymbol a\in \mathcal P_{k-1,d}$.
Then 
\[
e(H)=t^{(k)}(n,d)+\ell(\boldsymbol w(x))n^{k-1}+O_{k,d}(n^{k-2}),
\qquad
\mathcal N_L(H)=R_L(\boldsymbol w(x))n^{l-1}+O_L(n^{l-2}).
\]
\end{lem}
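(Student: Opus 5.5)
The plan is to prove the two asymptotic formulas separately, both by direct counting over patterns. Write $n_i=|V_i|$ with $n_i=n/d+O(1)$, so that $\prod_i\binom{n_i}{a_i}=\bigl(\prod_i a_i!\bigr)^{-1}(n/d)^{k-1}+O_{k,d}(n^{k-2})$ for every $\boldsymbol a\in\mathcal P_{k-1,d}$.

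For the edge count, note that $e(H)=t^{(k)}(n,d)+|\operatorname{lk}_H(x)|$. Split the link by pattern: $|\operatorname{lk}_H(x)|=\sum_{\boldsymbol a}w_{\boldsymbol a}\prod_i\binom{n_i}{a_i}$. Substituting the product estimate gives exactly $\ell(\boldsymbol w)n^{k-1}+O_{k,d}(n^{k-2})$ by the definition \eqref{eta}. The constant $\binom{d-1}{k-1}d^{-(k-1)}$ in $\eta$ is then just the value of $\ell$ for the full crossing link of a vertex placed in one part, which explains the normalisation. No further work is needed here.

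For the copy count, first observe that $T^{(k)}(n,d)$ is $L$-free (a copy would properly $d$-colour $F$, as in the proof of Lemma~\ref{lem-anchored}). Hence every copy of $L$ in $H$ contains $x$. Count labelled embeddings $\psi:V(L)\to V(H)$ with $\psi(v)=x$ for some $v\in V(L)$, and divide by $|\operatorname{Aut}(L)|$ at the end. Given $v$, each other vertex $z$ gets the colour $\varphi(z)\in[d]$ of the part containing $\psi(z)$. Edges of $L$ avoiding $v$ must map to crossing edges of $T^{(k)}(n,d)$. Therefore, up to the $O(n^{l-2})$ non-injective or degenerate choices, $\varphi$ is a map in which every edge avoiding $v$ is rainbow, as in \eqref{eq-link-polynomial}.

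For a fixed pair $(v,\varphi)$, the number of injective maps respecting the colour classes is $\prod_i(n_i)_{|\varphi^{-1}(i)|}=(n/d)^{l-1}+O(n^{l-2})$. We still need the edges through $v$ to lie in $\operatorname{lk}_H(x)$. Since edges of $L$ pairwise share at most one vertex, the sets $e\setminus\{v\}$ for $e\ni v$ are pairwise disjoint. So the constraint factorises, and the number of valid embeddings is a product, over the edges $e\ni v$, of the number of link sets of pattern $\rho(\varphi(e\setminus\{v\}))$ placed on the prescribed ordered coloured positions. That number is $w_{\rho}\prod_i\binom{n_i}{\rho_i}\cdot\prod_i\rho_i!$ ordered tuples, which to leading order equals $w_\rho (n/d)^{k-1}$. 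The remaining vertices (those not adjacent to $v$) contribute free factors $n/d$.

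Multiplying these factors gives $(n/d)^{l-1}\prod_{e\ni v}w_{\rho(\varphi(e\setminus\{v\}))}+O(n^{l-2})$ for each pair $(v,\varphi)$. Summing over pairs and dividing by $|\operatorname{Aut}(L)|$ yields $R_L(\boldsymbol w)n^{l-1}+O_L(n^{l-2})$.

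The main obstacle is making this factorisation rigorous. The link is an arbitrary family with only its pattern densities specified, so we cannot treat it as random. The point to check is that the count over different edges through $v$ uses disjoint vertex sets. The count is therefore exactly a product of link-set counts over those edges, times the free choices for the rest, corrected only by injectivity collisions, which cost $O(n^{l-2})$. One must also confirm that the non-$v$ vertices of an edge through $v$ impose no crossing condition beyond lying in the link, and that the automorphism normalisation matches that of \eqref{eq-link-polynomial}.
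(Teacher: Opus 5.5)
Your proposal is correct and follows essentially the same route as the paper. The paper likewise sums $w_{\boldsymbol a}\prod_i\binom{n_i}{a_i}$ over patterns for the edge count. For the copy count it fixes the root $v\mapsto x$ and the colouring $\varphi$, uses the pairwise disjointness of the sets $e\setminus\{v\}$ to factor the link constraints into $\prod_{e\ni v}w_{\rho(\varphi(e\setminus\{v\}))}(n/d)^{k-1}$ times free choices, and absorbs non-injective maps into $O_L(n^{l-2})$ before dividing by $|\operatorname{Aut}(L)|$.
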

\begin{proof}
Let $n_i$ be the  part sizes of $T^{(k)}(n,d)$.
For every  pattern $\boldsymbol a=(a_1\dots,a_d)\in \mathcal P_{k-1,d}$,
there are
\[
 w_{\boldsymbol a}(x)\cdot\prod_{i\in[d]}\binom{n_i}{a_i}
 =w_{\boldsymbol a}
 (x)\frac{(n/d)^{k-1}}{\prod_i a_i!}+O_{k,d}(n^{k-2})
\]
edges of pattern $\boldsymbol a$ in $H$.
There are only finitely many patterns, depending on $k,d$.
Sum over all patterns of $\mathcal P_{k-1,d}$.
This proves the edge formula.

Since  $T^{(k)}(n,d)$ is $L$-free, so every copy uses an added edge that contains $x$.
Fix a vertex $v\in V(L)$ that maps to $x$.
Fix also a map $\varphi$ as in \eqref{eq-link-polynomial}.
For an edge $e$ that contains $v$ in $L$, suppose that $\rho(\varphi(e\setminus\{v\}))=\boldsymbol b=\{b_1,\dots,b_d\}$.
There are
$w_{\boldsymbol b}(x)\cdot \prod_{i\in[d]}\binom{n_i}{b_i}$
edges   has this pattern in $lk_H(x)$. For every such edge, there are
$\prod_{i\in[d]} b_i!$ maps of the remain labelled vertices of $e\setminus\{v\}$
that respect $\varphi$. Thus the number of labelled choices for this group is
\[
w_{\boldsymbol b}(x)\cdot\prod_{i\in[d]}\binom{n_i}{b_i}\cdot\prod_{i\in[d]} b_i!=w_{\boldsymbol b}(x)({n\over d})^{k-1}+O_L(n^{k-2}).
\]
Any two edges of $L$ intersect  at most one vertex, so the sets $e\setminus\{v\}$ for $e\ni v$ are pairwise disjoint. 
We may first choose their images independently. Their union contains exactly $d_L(v)(k-1)$ labelled vertices.  Each of the remaining $l-1-d_L(v)(k-1)$
labels has $n/d+O(1)$ choices in the  parts specified by $\varphi$. Before excluding repeated images, the number of choices is
\[
 (n/d)^{l-1}
       \prod_{e\ni v}w_{\rho(\varphi(e\setminus\{v\}))}
       +O_L(n^{l-2}).
\]

We now exclude maps that identifies two vertices.
Such a map has at most $l-2$
free images, so
there are at most $n^{l-2}$ choices.
There are at most $\binom l2$ such pairs.
Thus all such choices number $O_L(n^{l-2})$.
Sum over $v$ and $\varphi$, then divide by
$|\operatorname{Aut}(L)|$. This gives the copy formula.
\end{proof}


For $i\in[d]$,  define
\[
 p_i(\boldsymbol w)
 =\sum_{\substack{\boldsymbol a\in\mathcal P_{k-1,d}\\a_i>0}}
       \frac{w_{\boldsymbol a}}{\prod_j a_j!}.
\]
Then $(1+O(n^{-1}))p_i(\boldsymbol w)(n/d)^{k-1}$ is  the number of link sets meeting $V_i$.
Each such set is counted once, even if it has several vertices in $V_i$.
Since every ${\prod_j a_j!}$ is nonnegative,
\[
 p_i(\boldsymbol w)=0
 \quad\Longleftrightarrow\quad
 w_{\boldsymbol a}=0
 \text{ for every $\boldsymbol a$ with $a_i>0$}.
\]


\begin{proof}[Proof of Theorem~\ref{thm-stationary-link}]
Assume that $\zeta_L=0$ and that \eqref{eq-link-rigidity} holds.
For the estimates below, first allow
 $1\le q\le n^{k-1/2}.$
Let $H$ be a $n$-vertex $k$-graphs with $t^{(k)}(n,d)+q$ edges and $\N_L(H)=h_L(n,q)$.
We will choose a fixed constant $\delta_L>0$ at the end.
All constants may depend on $L,k,d$.

Recall that  $c(n,L)=\alpha_Ln^{l-k}+O_L(n^{l-k-1})$,
and $q/n^k\le n^{-1/2}$. Theorem~\ref{thm-tlnq1} gives
\begin{equation}\label{eq-s5-initial-upper}
 \mathcal N_L(H)
 \le t_L(n,q)
 \le qc(n,L)+O_L(q^2n^{l-2k})
 =O_L(qn^{l-k}).
\end{equation}
Choose the partition $V_1,\ldots,V_d$ and maximizes $\Phi=\sum_{j=1}^k jh_j$ from Remark~\ref{remark-mb}. Let 
\[
 K=K^{(k)}(V_1,\ldots,V_d),\qquad
 B=E(H)\setminus E(K),\qquad
 M=E(K)\setminus E(H), \quad t=t^{(k)}(n,d)-e(K).
\]
Thus $B$ is the set of bad edges and $M$ is the set of
missing crossing edges.
And the Remark gives $n_i:=|V_i|=n/d+o(n)$, $|B|=o(n^k)$, $|M|=o(n^k)$, and
\begin{equation}\label{eq-s5-edge-identity}
 e(K)-|M|+|B|=t^{(k)}(n,d)+q,
 \qquad
 |B|-q=t+|M|.
\end{equation} 
We next prove that both $t$ and $|M|$ are zero, and thus $H=T^{(k)}(n,d)\cup B$.

\begin{clm}\label{clm-s5-local-bound}
For $x\in V_i$, let  $Y_i(x)=\{e\in E(H):x\in e,\ |e\cap V_i|\ge2\}$. Then  $Y_*:=\max_{i\in[d],x\in V_i}|Y_i|=o(n^{k-1})$.
\end{clm}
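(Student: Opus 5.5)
The plan is to run the argument from the second half of the proof of Lemma~\ref{lem-b} with a general constant in place of $6\delta_7$, now using the stronger count bound available here. By \eqref{eq-s5-initial-upper} and $q\le n^{k-1/2}$,
\[
\mathcal N_L(H)=O_L(qn^{l-k})=O_L(n^{l-1/2})=o(n^{l-1}).
\]
Fix $\gamma>0$ and choose $\delta$ with $0<\delta\ll\gamma$. Suppose, for contradiction, that some $x\in V_i$ has $|Y_i(x)|\ge\gamma n^{k-1}$. For each $j\in[d]$ put
\[
Y_j=\{v\in V_j: d_H(xv)\ge\delta n^{k-2}\}.
\]

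\emph{Step 1: at most $d-1$ indices $j$ satisfy $|Y_j|\ge\delta n$.} Suppose all $d$ sets $Y_j$ have size at least $\delta n$. Since $F$ is color-critical with critical edge $uv$, the graph $F-u$ is $d$-colorable. Build potential copies of $L$ with $u\mapsto x$ as follows.
\begin{itemize}
\item Place the color classes of $F-u$ inside $Y_1,\dots,Y_d$.
\item For each $c$ in the image of $N_F(u)$, choose an $H$-edge $\{x,c\}\cup V_c$, which we take from $\operatorname{lk}_H(x)$; the definition of $Y_j$ gives at least $\delta n^{k-2}$ choices for each.
\item Expand every original edge avoiding $u$ by a crossing $(k-2)$-set in the other parts, avoiding $x$.
\end{itemize}
As in the proof of Lemma~\ref{lem-b}, this yields at least $(\delta' n)^{l-1}$ potential copies for some $\delta'=\delta'(\delta,L)>0$.

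Every edge through $x$ in these copies is an edge of $H$ by construction. Hence a copy fails to lie in $H$ only if it uses a missing crossing edge avoiding $x$. Each $f\in M$ lies in at most $O_L(n^{l-k-1})$ such copies, since $x$ and the $k$ vertices of $f$ are fixed. So at most $O_L(|M|n^{l-k-1})=o(n^{l-1})$ copies are lost, using $|M|=o(n^k)$. Therefore $\mathcal N_L(H)\ge\tfrac12(\delta' n)^{l-1}$, which contradicts $\mathcal N_L(H)=o(n^{l-1})$.

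\emph{Step 2: $|Y_i|\ge\delta n$.} Each edge of $Y_i(x)$ contains some $v\in V_i\setminus\{x\}$, so
\[
\gamma n^{k-1}\le|Y_i(x)|\le\sum_{v\in V_i\setminus\{x\}} d_H(xv)\le |Y_i|n^{k-2}+n\cdot\delta n^{k-2}.
\]
Because $\delta\ll\gamma$, this forces $|Y_i|\ge\delta n$. Combined with Step~1, some $m\ne i$ has $|Y_m|<\delta n$, and then the number of $H$-edges containing $x$ and meeting $V_m$ is at most
\[
|Y_m|n^{k-2}+n\,\delta n^{k-2}\le 2\delta n^{k-1}.
\]

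\emph{Step 3: contradiction with the choice of $\Phi$.} Move $x$ from $V_i$ to $V_m$ and compare $\Phi$.
\begin{itemize}
\item Every edge of $Y_i(x)$ avoiding $V_m$ now meets one more part, so its contribution rises by exactly one. There are at least $(\gamma-2\delta)n^{k-1}$ such edges.
\item Every edge through $x$ meeting $V_m$ loses at most one, for a total loss of at most $2\delta n^{k-1}$.
\item All other edges are unaffected.
\end{itemize}
So $\Phi$ increases by at least $(\gamma-4\delta)n^{k-1}>0$.

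The modified partition still satisfies the conditions of Remark~\ref{remark-mb}, since only one vertex has moved and all quantities change by $O(n^{k-1})=o(n^k)$. This contradicts the maximality of $\Phi$. Hence $|Y_i(x)|<\gamma n^{k-1}$ for every $i$ and every $x\in V_i$, and since $\gamma>0$ was arbitrary, $Y_*=o(n^{k-1})$.

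The step I expect to need the most care is Step 1. It depends on building the rooted copies from edges actually present through $x$, so that the missing edges through $x$, which could be numerous, never enter the count. It also depends on the bound $\mathcal N_L(H)=o(n^{l-1})$, which holds here because $q\le n^{k-1/2}$ and not merely $q=O(n^{k-1})$.
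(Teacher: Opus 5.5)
\textbf{There is a genuine gap.} The opening bound is wrong. From \eqref{eq-s5-initial-upper} you only get $\mathcal N_L(H)=O_L(qn^{l-k})$. With $q$ up to $n^{k-1/2}$ this is $O_L(n^{l-1/2})$, which exceeds $n^{l-1}$ by a factor $n^{1/2}$; it is not $o(n^{l-1})$. More to the point, you have no upper bound on $\mathcal N_L(H)$ below order $n^{l-1}$ once $q\ge\varepsilon n^{k-1}$. The comparison constructions have about $\alpha_Lqn^{l-k}$ copies, and the claim is used precisely to show that $H$ has at least $t_L(n,q)\ge qc(n,L)$ of them. So the $\tfrac12(\delta' n)^{l-1}$ copies through $x$ built in Step~1 contradict nothing. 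Your remark that the bound holds ``because $q\le n^{k-1/2}$ and not merely $q=O(n^{k-1})$'' is backwards: larger $q$ only weakens it. Your argument is the proof of Lemma~\ref{lem-b}, which needs $\mathcal N_L(H)\le\delta_1n^{l-1}$. To reach $o(n^{k-1})$ it therefore covers only $q=o(n^{k-1})$, whereas Section~\ref{s5} is about going beyond $n^{k-1}$. Steps~2 and~3 are fine, but they depend on Step~1 to produce the index $m$ with $|Y_m|<\delta n$.

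\textbf{The missing idea.} When $q\gtrsim n^{k-1}$, having $\Theta(n^{l-1})$ copies through one vertex is not contradictory in itself. You must show it costs more than some alternative graph with the same number of edges. That is a comparison of leading constants, and it is exactly where hypothesis \eqref{eq-link-rigidity} enters; your proof never uses it. The paper's route has four steps.
\begin{enumerate}
\item With $\boldsymbol w(x)$ the link-density vector, maximality of $\Phi$ gives $p_i(\boldsymbol w(x))\le p_j(\boldsymbol w(x))+o(1)$ for all $j$.
\item Lemma~\ref{lem-link-count}, with the missing-edge losses removed, gives $\mathcal N_L(x;H)\ge R_L(\boldsymbol w(x))n^{l-1}-o(n^{l-1})$.
\item Replace the bad edges at $x$ by its missing crossing edges. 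Restore the edge count with $r\le\eta(\boldsymbol w(x))n^{k-1}+o(n^{k-1})$ random simple-pattern edges away from $x$, each costing about $\alpha_Ln^{l-k}$ copies. Minimality of $H$ then yields $R_L(\boldsymbol w(x))\le\alpha_L\max\{\eta(\boldsymbol w(x)),0\}+o(1)$.
\item Take a sequence with $|Y_i(x)|\ge\varepsilon n^{k-1}$ and pass to a limit $\boldsymbol w^*$. It satisfies the left side of \eqref{eq-link-rigidity}, while $\min_jp_j(\boldsymbol w^*)=p_i(\boldsymbol w^*)>0$, a contradiction.
\end{enumerate}
Both the copies through $x$ and the cost of relocating its extra edges are of order $n^{l-1}$. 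So whether a vertex with a rich link is wasteful depends on these constants, and some argument of this kind is needed.
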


Fix $x\in V_i$, and let $\boldsymbol w(x)$ be its link vector. The $Y_i(x)$ need not contain every bad edge through $x$. By the same  count in Lemma~\ref{lem-link-count}, we have
\[
 |Y_j(x)|
 \le d^{-(k-1)}p_j(\boldsymbol w(x))n^{k-1}
  +o(n^{k-1}) \text{ for each } j\in[d].
\]
For each $j\in[d]$, count the sets in
$\operatorname{lk}_H(x)$ that meet $V_j\setminus\{x\}$.
Thus maximality of $\Phi$ gives
\[
 |\{S\in\operatorname{lk}_H(x):S\cap V_i\ne\varnothing\}|
 \le
 |\{S\in\operatorname{lk}_H(x):S\cap V_j\ne\varnothing\}|
 \text{ for each } j\in[d].
\]
Otherwise, moving the vertex $x$ from part $i$ to part $j$ will increase 
$\Phi$: If the edge avoids $x$, then removing $x$ from the parts
does not change $\Phi$. If the edge contains $x$. Every set $S$ with $S\cap V_i\ne\varnothing$ and $S\cap V_j=\varnothing$  increases its
contribution to $\Phi$ by exactly one.
Every set $S$ with $S\cap V_i\ne\varnothing$ and $S\cap V_j\ne\varnothing$  does not change $\Phi$.
Therefore, by the same  count in Lemma~\ref{lem-link-count}, 
\begin{equation}\label{eq-s5-minimal-part}
 p_i(\boldsymbol w(x))\le p_j(\boldsymbol w(x))+o(1)
 \text{ for each } j\in[d].
\end{equation}

Let $\mathcal N_L(x;H)$ be the number of copies containing $x$.
Lemma~\ref{lem-link-count} counts
 $R_L(\boldsymbol w(x))n^{l-1}+o(n^{l-1})$
potential copies through $x$ in $H\cup M$.
For a fixed edge $f\in M$, there are  at most $n^{l-k-1}$   copies counted above contins $f$ and $x$. 
Indeed, if  $x\in f$ then every such copy contains a edge of  $B$ since $K$ is $L$-free, and if $x\notin f$ then every such copy fixes the $k + 1$ vertices
$(x\cup V (f ))$.
There are then at most $n^{l-k-1}$ choices for the other vertices.
Since $|M|n^{l-k-1}=o(n^{l-1})$, summing over $M$ gives
\begin{equation}\label{eq-s5-root-lower}
 \mathcal N_L(x;H)
 \ge R_L(\boldsymbol w(x))n^{l-1}
             -o(n^{l-1}).
\end{equation}

Now replace  all bad edges of $H$   containing $x$ by  the missing edges 
containing $x$. Only the degree of $x$ has changed, so
\[
 r:=e(H)-e(J)=d_B(x)-d_M(x)
   \le\eta(\boldsymbol w(x))n^{k-1}+o(n^{k-1}),
 \qquad |r|=O_L(n^{k-1}).
\]
Call the new graph $J$.
Copies avoiding $x$ are unchanged.
A copy through $x$ in $J$ must contain a bad edge away from $x$,
because $K$ is $L$-free.
Fixing that bad edge and $x$ gives
\begin{align}\label{J1}
 \mathcal N_L(J)
 \le \mathcal N_L(H)-\mathcal N_L(x;H)
                         +O_L(|B|n^{l-k-1}).
\end{align}

If $r<0$, delete $-r$ edges away from $x$. This deletion is possible and creates no new copies since
there are $\Theta_{k,d}(n^k)$ such edges in $J$.
Suppose that $r\ge 0$.  For a simple  pattern that one part is used twice
and $k-2$ other parts are used once,
There are $\Theta_{k,d}(n^k)$ edges of this pattern avoiding $x$.
And only $o(n^k)$ of them are present in $J$, because all present
edges of this pattern belong to $B$.
Therefore the set of available edges  has size
$\Theta_{k,d}(n^k)$.
Choose a uniform subset $B_1$ with $|B_1|=r$ of it and add these edges to $J$. In either case the result graph $J\cup B_1$ has exactly $e(H)$ edges.

By Theorem~\ref{thm-c} and
Lemma~\ref{lem-cs},
the copies of $L$  use just one added  edge  has count
$(\alpha_L+o(1))n^{l-k}$.
Thus these copies number at most
$r(\alpha_L+o(1))n^{l-k}$.
Every other copy in $J\cup B_1$ containing either two added edges, or a added edge and an old bad edge away from $x$.
Lemma~\ref{lem-mixed-sampling} bounds their expected number by
$ O_L\bigl(r^2n^{l-2k}+r|B|n^{l-2k}\bigr)=o(n^{l-1})$.
Indeed,
$ r^2n^{l-2k}=O_L(n^{l-2})$ and
 $r|B|n^{l-2k}
 =O_L\left(\frac{|B|}{n^k}n^{l-1}\right)=o(n^{l-1})$.
Therefore \begin{align}\label{J2}
\N_L(J\cup B_1)-r(\alpha_L+o(1))n^{l-k}-o(n^{l-1})\le \N_L(J).
\end{align}

Since $\N_L(H)\le \N_L(J\cup B_1)$, combine \eqref{J1} with \eqref{J2}, we have
\[\mathcal N_L(x;H)
 \le \alpha_L\max\{r,0\}n^{l-k}+o(n^{l-1})\le \alpha_L\max\{\eta(\boldsymbol w(x)),0\}n^{l-1}+o(n^{l-1}).\]
Combine this with \eqref{eq-s5-root-lower}, we obtain
\begin{equation}\label{eq-s5-link-approx}
 R_L(\boldsymbol w(x))
 \le \alpha_L\max\{\eta(\boldsymbol w(x)),0\}+o(1).
\end{equation}

Suppose otherwise.
Since $n\to\boldsymbol \infty$, there are $\varepsilon>0$ and a sequence of choices
$n,q,H,x\in V_i$ for which
$|Y_i(x)|\ge\varepsilon n^{k-1}$.
Choose a subsequence on which $i$ is fixed, and choose
a further subsequence on which
$\boldsymbol w(x)\to\boldsymbol w^*$.
Continuity and \eqref{eq-s5-link-approx} give
 $R_L(\boldsymbol w^*)
 \le\alpha_L\max\{\eta(\boldsymbol w^*),0\}$.
And \eqref{eq-s5-minimal-part} gives
 $p_i(\boldsymbol w^*)=\min_{j\in[d]}p_j(\boldsymbol w^*)\ge d^{k-1}\varepsilon>0,$
which contradicts \eqref{eq-link-rigidity}. This completes the proof of Claim.

We first compare $\N_L(H)$ and $\N_L(K\cup B)$. Fix $f\in M$.  Since $U$ is $L$-free, a potential copy has missing edge $f$, then it also has a bad edge  $e
\in B$ with both original endpoints in the same part. 
If $e$ and $f$ intersect, their common
vertex must be one of the two original endpoints of $e$. The other vertices of $e$ have degree one
in the copy.  
There are at most $kY_*$ possible such edges $e$
that meet $f$, and each intersecting pair $(e,f)$ occurs in at most
$O_L(n^{l-2k+1})$ copies.  Moreover, each disjoint pair $(e,f)$  occurs in at most $O_L(n^{l-2k})$ copies, 
and there are at most $|B|$ choices of $e$ for the fix $f$.
Summing the intersecting and disjoint separately gives 
\begin{equation}\label{eq-general-rigid-fill}
 0\le\mathcal N_L(K\cup B)-\mathcal N_L(H)
 \le C_L\bigl(|M|Y_*n^{l-2k+1}
                       +|M||B|n^{l-2k}\bigr)
 \le o(|M|n^{l-k}).
\end{equation}
When $M=\varnothing$, both sides of the upper bound are zero. 
Since $\mathcal N_L(K\cup B)\ge |B|(1-o(1))c(n,L)\ge O_L(|B|n^{l-k})$, combine \eqref{eq-s5-initial-upper} with \eqref{eq-general-rigid-fill} give  $|B|=O_L(\N_L(H)n^{k-l})=O_L(q)$. 
It follows that $|M|\le |B|=O_L(q)$ and $t\le |B|=O_L(q)$.

Let $m_1,\ldots,m_d$ be the ordered balanced
part sizes in Lemma~\ref{lem-integer-balancing}, and let
$A=\max_{i\in[d]}|n_i-m_i|$.
By Lemma \ref{lem-st}, $A^2=O_{k,d}(t/n^{k-2})$.
For any edge $e\in B$, consider  copies with exactly one bad edge $e$ in $K\cup B$.
Since $\zeta_L=0$, Theorem \ref{thm-c} gives
\begin{equation}\label{eq-general-rigid-transfer}
 |C(n_1,\dots,n_d,e;L)- c(n,L)|\le 
 O_L(A^2)n^{l-k-2}\le O_L(t)n^{l-2k}.
\end{equation}
Let $\N_L^2(K\cup B)$ be the number of copies in $K\cup B$ with at least
two bad edges. Thus
\[
 \mathcal N_L(K\cup B)
 =\sum_{e\in B}C(n_1,\dots,n_d,e;L)+\N_L^2(K\cup B).
\]
 Combine this with \eqref{eq-s5-initial-upper},   \eqref{eq-general-rigid-fill} and \eqref{eq-general-rigid-transfer}, since $\N_L(K\cup B)\ge \N_L(H)$, we obtain
\[
 \begin{split}
  \N_L^2(K\cup B)\le{}&-(|B|-q)c(n,L)+O_L(|B|t) n^{l-2k}+
 O_L(q^2n^{l-2k})+o(|M|n^{l-k}).
 \end{split}
\]
Since $c(n,L)=\Theta(n^{f-k})$, $|B|-q=t+|M|$, $|M|\le O_L(q)$, $t\le O_L(q)$ and $q\le n^{k-1/2}$,   
we obtain $ \N_L^2(K\cup B)\le O_L(q^2n^{l-2k})$.

Suppose for a contradiction, that $t+|M|>0$.
Use the balancing moves in Lemma~\ref{lem-integer-balancing}. The number of moves  $s$  satisfies
$s=O_{d}(A)= O_{k,d}(t/n^{k-2})$.
At most $r=O_L(|B|s/n)\le O_L(qt/n^{k-1})$  bad edges meeting at least one moved vertex.
Delete
 $b=\max\{r,t+|M|\}$
edges of $B$, including every edge meeting a moved vertex.
This is possible because $r\le |B|$ and
$t+|M|=|B|-q\le |B|$.
Let $B_0$ be the remaining edge subset of $B$, and let $K'$ be the balanced complete $d$-partite $k$-graph.
Choose a pattern attaining the exact minimum $c(n,L)$ in $K'$, its total number of
edges is $(1+o(1))\prod_{i\in[d]}\binom{n/d}{a_i}=\Theta_{k,d}(n^k)>q\ge |B_0|$.
If $r>t+|M|$, then $b=r$ and  choose a  subset $B_1$ with $|B_1|=r-(t+|M|)$ of this available family and set $H':= K'\cup B_0\cup B_1$.
Therefore $e(H')=t^{(k)}(n,d)+q$.

We next compare $\N_L(K\cup B)$ and $\N_L(H')$.
For each deleted edge, all copies having it as their only
bad edge disappear. These families are disjoint.
By \eqref{eq-general-rigid-transfer}, the decrease is at least $b(c(n,L)-O_L(t) n^{l-2k})$.
Next replace $K\cup B_0$ by $K'\cup B_0$.
For copies with exactly one bad edge of $B_0$, the change is at most
$ O_L(|B_0|A^2) n^{l-k-2}\le O_L(qt) n^{l-2k}$
by \eqref{eq-general-rigid-transfer}.
We estimate copies with at least two  bad edges of $B_0$.
Before balancing, their count is at most $\N_L^2(K\cup B)$.
As in
Lemma~\ref{lem-count-transfer}, their increase is at most
$O_L(\frac{s}{n}\N_L^2(K\cup B) \le O_L(s q^2)n^{l-2k-1}$.
Finally add $B_1$ with $
|B_1|=\max\{r-(t+|M|),0\}$.
Each  edge of $B_1$  has exactly $c(n,L)$ copies  using it as
the only bad edge in $K'\cup B_1$.
Thus these copies contribute $|B_1|c(n,L)$.
Every other  copy uses either two edges of $B_1$,
or one edge of $B_1$ and an edge of $B_0$.
Lemma~\ref{lem-mixed-sampling} bounds their expected number by
$O_L\bigl(|B_1||B_0|+|B_1|^2\bigr)n^{l-2k}\le O_L\bigl(rq+r^2\bigr)n^{l-2k}
 \le O_L(q^2t/n^{k-1})n^{l-2k}$. The combined estimate is \begin{align*}
 \N_L(K\cup B)-\mathbb E(\mathcal N_L(H'))
 \ge& b(c(n,L)-O_L(t) n^{l-2k})-O_L(qt) n^{l-2k}-O_L(s q^2)n^{l-2k-1}\notag \\&-|B_1|c(n,L)-O_L(q^2t/n^{k-1})n^{l-2k}\notag\\
 =(t+|M|)c(n,L)&-O_L((b+q)t) n^{l-2k}-O_L(s q^2)n^{l-2k-1}- O_L(q^2t)n^{l-3k+1}.\end{align*}

Therefore     \eqref{eq-general-rigid-fill} gives
\[\begin{split}
  \mathbb E(\mathcal N_L(H'))-&\mathcal N_L(H)\le-(t+|M|)c(n,L)\\
 &+o(|M|)n^{l-k}+O_L\bigl((b+q)t n^{l-2k}
             +s q^2n^{l-2k-1}
             +q^2tn^{l-3k+1}\bigr).
 \end{split}
\]
Since $b\le |B|=O_L(q)$,  $t\le t+|M|$ and $s= O_L(t/n^{k-2})$, we obtain
\[
 \frac{(b+q)t n^{l-2k}}{(t+|M|)n^{l-k}}
 \le O_L(\frac{q}{n^k}),\quad  \frac{s q^2n^{l-2k-1}}{(t+|M|)n^{l-k}}
 \le O_L(\frac{q^2}{n^{2k-1}}),\quad\frac{q^2tn^{l-3k+1}}{(t+|M|)n^{l-k}}
 \le O_L(\frac{q
 ^2}{n^{2k-1}}) .
\]
Use $c(n,L)=\alpha_Ln^{l-k}+O_L(n^{l-k-1})$.
There are a sufficiently constant $\varepsilon_n$ and  a  constant $K_L>0$ such that
\begin{equation}
 \mathbb E(\mathcal N_L(H'))-\mathcal N_L(H)
 \le (t+|M|)n^{l-k}
 \left[
  -\alpha_L+\varepsilon_n
  +K_L\left(\frac{q}{n^k}+{q^2\over n^{2k-1}}\right)\notag
 \right].
\end{equation}
The constants $\varepsilon_n$ and $K_L$ are independent of $q$ and $H$.
Choose $0<\delta_L\le1$ so that
$K_L\delta_L^2\le\alpha_L/4$.
Let $1\le q\le\delta_Ln^{k-1/2}$. Since $t+|M|>0$,  for sufficiently large $n$,  
$ \mathbb E(\mathcal N_L(H'))<\mathcal N_L(H)$.
Therefore, some choice of $B_1$  gives fewer copies than $H$. This contradicts  $\N_L(H)=h_L(n,q)$.

We conclude that $t+|M|=0$, and  so $t=0$ and $M=\varnothing$.
The equality $t=0$ means that the parts are balanced.
Thus $H$ contains $T^{(k)}(n,d)$ and  $h_L(n,q)\ge t_L(n,q)$.
On the other hand, the definitions give $h_L(n,q)\le t_L(n,q)$. Therefore $h_L(n,q)=t_L(n,q)$, and
$q_L(n)=\Omega_L(n^{k-1/2})$.
\end{proof}
\section{Two applications}\label{s6}
For $F_{s,t,d}$, let $uv$ be the added edge in its part of size $s$. We prove $q_{\K_3(F_{2,3,d})}(n)=\Omega(n^{5/2})$ for
$4\le d\le25$, and  $q_{\K_3(F_{2,5,d})}(n)=\Theta(n^2)$ for every
$d\ge4$.
\subsection{Counting formulas}\label{sec:counts}
We compute the derivative and rooted counting polynomial used below.
The polynomial needed below is
\begin{equation}\label{eq:polynomial}
 P_d(r_1,\ldots,r_d)=\sum_{i=1}^d r_i\prod_{j\ne i}r_j^3.
\end{equation}
All its coefficients are nonnegative.
Hence increasing any coordinate cannot decrease its value.

\begin{prop}\label{prop:link-count}
Let $L_d=\K_3(F_{2,3,d})$. Then
\begin{equation}\label{eq:counts}
 \frac{R_{L_d}(\boldsymbol w)}{\alpha_{L_d}}
 =\frac{\sum_iw_{ii}}{2d^2}
 +\frac{P_d(W_1+w_{11},\ldots,W_d+w_{dd})}
        {d^2(d-2)^{3(d-1)}}.
\end{equation}
Consequently,
\begin{equation}\label{eq:difference}
 \frac{R_{L_d}(\boldsymbol w)}{\alpha_{L_d}}-\eta(\boldsymbol w)
 =
 \frac{
 P_d(W_1+w_{11},\ldots,W_d+w_{dd})
 -(d-2)^{3(d-1)}\left(\sum_{i<j}w_{ij}-\binom{d-1}{2}\right)}
 {d^2(d-2)^{3(d-1)}}.
\end{equation}
\end{prop}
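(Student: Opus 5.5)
The plan is to evaluate the rooted counting polynomial \eqref{eq-link-polynomial} directly for $L_d=\K_3(F_{2,3,d})$ with $k=3$, and to compare each term with the labelled count behind $\alpha_{L_d}$ from Theorem~\ref{thm-c}. Here $\mathcal P_{2,d}$ consists of the patterns $ii$ (weight $w_{ii}$, factorial $2$) and $ij$ with $i<j$ (weight $w_{ij}$). I take $W_i=\sum_{j\ne i}w_{ij}$. Identity \eqref{eq:difference} is then pure algebra: by \eqref{eta} with $k=3$,
\[
\eta(\boldsymbol w)=d^{-2}\Bigl(\sum_{i<j}w_{ij}+\tfrac12\sum_i w_{ii}\Bigr)-\binom{d-1}{2}d^{-2}.
\]
Subtracting this from \eqref{eq:counts} cancels the $w_{ii}$ terms and gives \eqref{eq:difference}. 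So all the work is in \eqref{eq:counts}.

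\emph{Step 1: classify the admissible pairs $(v,\varphi)$.} Recall that $\varphi$ colours $V(L)\setminus\{v\}$ so that every edge avoiding the root is rainbow. Restricted to original vertices, this is a proper colouring of $F-(\text{edges at the root})$ in which each expansion vertex avoids the colours of its edge's endpoints.
\begin{itemize}
\item If the root is the expansion vertex of an original edge $e$, then $F-e$ must be $d$-colourable. By Corollary~\ref{cor-multipartite-threshold}, $uv$ is the unique critical edge, so $e=uv$.
\item If the root is an original vertex $y$, then $F_{2,3,d}-y$ must be $d$-colourable. When $y$ lies in a part of size $3$, the graph $F-y$ still contains $u,v$ and one vertex from each other part, hence a $K_{d+1}$; so this is impossible. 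Hence $y\in\{u,v\}$.
\end{itemize}
So only two root types occur.

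\emph{Step 2: the expansion-vertex root of $uv$.} Here $F-uv$ is complete $d$-partite, so, as in the proof of Corollary~\ref{cor-multipartite-threshold}, its proper $d$-colourings use the original parts as colour classes. Thus $u,v$ receive a common colour $i$, and the single root edge has pattern $ii$, contributing $w_{ii}$. All remaining choices (expansion colours, from $d-2$ options each) are exactly the labelled colourings counted in the leading coefficient of $C_{\boldsymbol a}$ for a simple pattern with doubled part $i$. Matching normalisations via Lemma~\ref{lem-link-count} and Theorem~\ref{thm-c}, this should give $\alpha_{L_d}w_{ii}/(2d^2)$. The factor $1/2$ comes from the $\binom{n/d}{2}\approx (n/d)^2/2$ pairs in part $i$, and $d^{-2}$ from the two link vertices. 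Summing over $i$ gives the first term of \eqref{eq:counts}.

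\emph{Step 3: original-vertex root, say $u$ (the case $v$ is symmetric).} Now $F-u$ is complete $d$-partite with parts of sizes $1,3,\dots,3$. So $v$ gets some colour $i$, and the $d-1$ other parts get the remaining colours $j\ne i$ bijectively.
\begin{itemize}
\item Each root edge $uw$ with $w$ of colour $j$ carries an expansion vertex whose colour $c$ is unconstrained, since the edge contains the root. Summing over $c$ gives $w_{jj}+\sum_{c\ne j}w_{jc}=W_j+w_{jj}$.
\item The root edge $uv$ similarly gives $W_i+w_{ii}$.
\end{itemize}
There are three edges $uw$ per colour $j\ne i$, so the product is $(W_i+w_{ii})\prod_{j\ne i}(W_j+w_{jj})^3$, and summing over $i$ yields $P_d$ from \eqref{eq:polynomial}. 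Every edge not at $u$ (including those at $v$) still has $d-2$ expansion colours, exactly as in the $\alpha_{L_d}$ count. The difference from the $\alpha$ count is that the $3(d-1)$ edges at $u$ there contributed a factor $(d-2)$ each, and here contribute $w$-sums instead. Dividing by that lost factor explains the denominator $(d-2)^{3(d-1)}$.

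\emph{Main obstacle.} I expect the hard part to be the bookkeeping of constants: the $1/|\Aut(L)|$, the $d^{-(l-1)}$ versus $d^{-(l-3)}$ powers, the two roots $u,v$ against the factor $2\binom{a_i}{2}$ in Theorem~\ref{thm-c}, and the $1/\prod a_i!$ convention for $w_{ii}$. My approach is to write both $R_{L_d}$ and $\alpha_{L_d}$ as sums over the same set of labelled colourings of $L_d$ with identical weights. The ratio then reduces to the explicit powers $d^{-2}$ and $(d-2)^{-3(d-1)}$, and to the factor $1/2$ for the doubled pattern.
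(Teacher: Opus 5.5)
Your proposal is correct and follows essentially the same route as the paper. You use the same two root types (the expansion vertex of $uv$, or $u$/$v$ themselves) and the same link factor $W_j+w_{jj}$ per root edge. You also normalise against the same labelled count, which the paper writes out explicitly as $\alpha_{L_d}|\Aut(L_d)|=2(d-1)!(d-2)^{e(F)-1}/d^{l-3}$; this settles exactly the constant bookkeeping you flagged. Your derivation of \eqref{eq:difference} by subtracting $\eta$ is identical to the paper's.
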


\begin{proof}
Write $F=F_{2,3,d}$ and $l=|V(L_d)|$.
We count labelled maps first. Every unlabelled copy corresponds
to exactly $|\Aut(L_d)|$ such maps.

Fix an added triple with two vertices in one part.
By Theorem~\ref{lem:critical}, its repeated pair must receive $u,v$.
There are two orders for this pair.
The other $d-1$ original parts can be assigned to colors in
$(d-1)!$ ways. Each of the remaining $e(F)-1$ expansion vertices
has $d-2$ choices of color.
The other $l-3$ vertices are free, and every vertex has $n/d$ choices. Thus
\begin{equation}\label{eq:alpha-count}
 \alpha_{L_d}|\Aut(L_d)|
 =\frac{2(d-1)!(d-2)^{e(F)-1}}{d^{l-3}}.
\end{equation}

We now count the rooted maps in \eqref{eq-link-polynomial}.
There are two possible types of root.

First suppose that the root is an original vertex. The root must be \(u\) or \(v\). By symmetry, assume that it is \(u\).
Indeed, deleting any other original vertex leaves the clique
formed by $u,v$ and one vertex in each remaining part.
The part from which a vertex was deleted still has at least two
vertices. Thus the remaining original graph contains $K_{d+1}$.
Its expanded edges cannot all be crossing in $d$ parts.

Delete $u$. The original vertices now form a complete
$d$-partite graph with part sizes $1,3,\ldots,3$.
Suppose that  $v$ has color $i$.
There are $(d-1)!$ ways to color the other original parts.
For an edge through $u$, fix the color $j$ of its other original
endpoint. Its expansion vertex may have any color $h\in[d]$.
Summing the corresponding link factor gives
$
 \sum_{h=1}^d w_{jh}=W_j+w_{jj}.
$
The vertex $u$ has $1+3(d-1)$ incident edges in $F$, with one neighbor of color $i$ and three of every other color.
Hence the product of the link factors is
$
 (W_i+w_{ii})\prod_{j\ne i}(W_j+w_{jj})^3.
$
Each edge not incident with $u$ gives $d-2$ choices for its
expansion color. Summing over $i$ and both roots $u,v$,
the contribution multiplied by $|\Aut(L_d)|$ is
\[
 \frac{2(d-1)!(d-2)^{e(F)-1-3(d-1)}}{d^{l-1}}
 P_d(W_1+w_{11},\ldots,W_d+w_{dd}).
\]
Divide by \eqref{eq:alpha-count}.
This gives the second term in \eqref{eq:counts}.

Next suppose that the root is an expansion vertex on an edge $ab$.
All other expanded edges are crossing.
Their endpoint colors give a proper $d$-coloring of $F-ab$.
Thus $ab$ is critical, and Theorem~\ref{lem:critical} gives $ab=uv$.
The vertices $u,v$ have the same color.
Summing over that color gives  $\sum_iw_{ii}$.
The other color assignments give
$(d-1)!(d-2)^{e(F)-1}$.
After division by \eqref{eq:alpha-count}, this contribution is
$\sum_iw_{ii}/(2d^2)$.

These two types include every possible root.
Adding their contributions proves \eqref{eq:counts}.
Subtracting \eqref{eta} proves
\eqref{eq:difference}.
\end{proof}

We prove a concavity statement.
It will let us replace arbitrary edge weights by weights in
$\{0,1\}$, with at most one exception.

\begin{lem}\label{lem:concavity}
The function $\log P_d(r_1,\ldots,r_d)$ is concave on the set where all $r_i>0$.
\end{lem}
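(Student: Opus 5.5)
We show that the Hessian of $\log P_d$ is negative semidefinite on the open positive orthant. The idea is to factor $P_d$ into a monomial times a linear form. Put $\Pi=\prod_{j=1}^d r_j$. Since $r_i\prod_{j\ne i}r_j^3=\Pi^3/r_i^2$, we have
\[
P_d(r_1,\ldots,r_d)=\Pi^3\sum_{i=1}^d r_i^{-2},
\qquad\text{so}\qquad
\log P_d=3\sum_{j=1}^d\log r_j+\log\Bigl(\sum_{i=1}^d r_i^{-2}\Bigr).
\]
The first summand is a sum of concave functions, hence concave. Everything therefore reduces to the second summand $g(r)=\log S(r)$ with $S=\sum_i r_i^{-2}$. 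This summand is not concave on its own, since along a ray $r=\lambda r^0$ it equals $-2\log\lambda+\mathrm{const}$, which is convex. So the two pieces must be handled together, and this is the only real obstacle.

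We compute the Hessian of $\log P_d$ directly. Write $s_i=r_i^{-2}$, so that $\partial_i S=-2r_i^{-3}$ and $\partial_{ii}S=6r_i^{-4}$. Then
\[
\partial_{ij}\log P_d=-\frac{3\delta_{ij}}{r_i^2}+\frac{6\delta_{ij}}{r_i^4S}-\frac{4}{r_i^3r_j^3S^2}.
\]
For a direction $h$, substitute $x_i=h_i/r_i$. This turns the quadratic form $Q(h)=\sum_{i,j}h_ih_j\partial_{ij}\log P_d$ into
\[
Q=-3\sum_i x_i^2+\frac{6\sum_i s_ix_i^2}{S}-4\Bigl(\frac{\sum_i s_ix_i}{S}\Bigr)^2 .
\]

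It remains to show $Q\le0$, and for this we use a weighted average. Let $p_i=s_i/S$, so the $p_i$ are positive and sum to $1$. Then
\[
Q=-3\sum_i x_i^2+6\sum_i p_ix_i^2-4\Bigl(\sum_i p_ix_i\Bigr)^2 .
\]
Since $p_i\le1$, we have $\sum_i p_ix_i^2\le\sum_i x_i^2$, but this alone gives only $Q\le3\sum_i x_i^2-4(\cdots)^2$, which has the wrong sign. A finer bound is needed, and we obtain it by splitting off the largest weight. Let $p_1=\max_i p_i$. The terms with $i\ne1$ contribute $\sum_{i\ne1}(6p_i-3)x_i^2$. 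The $i=1$ term, together with the square, contributes $(6p_1-3)x_1^2-4(\sum_i p_ix_i)^2$.

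When $d\ge2$ at most one $p_i$ can exceed $1/2$, so $6p_i-3\le0$ for every $i\ne1$. If also $p_1\le1/2$, every coefficient is nonpositive and $Q\le0$ follows at once. So assume $p_1>1/2$. Set $m=\sum_i p_ix_i$, so that $x_1=(m-\sum_{i\ne1}p_ix_i)/p_1$, and apply Cauchy--Schwarz to bound $(\sum_{i\ne1}p_ix_i)^2\le(\sum_{i\ne1}p_i^2/(3-6p_i))\sum_{i\ne1}(3-6p_i)x_i^2$. Maximizing over $x_1$ reduces $Q\le0$ to a scalar inequality in $p_1$, and this inequality should follow from $p_i\le1-p_1$ for $i\ne1$.

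If that case analysis proves awkward, we fall back on a weaker goal. The paper only uses concavity to replace weights by $\{0,1\}$ values (with at most one exception). For that purpose it suffices to show that $P_d$ is quasi-concave, or concave along each coordinate-pair segment. That in turn reduces to the two-variable function $r\mapsto\log(r_1^{-2}+r_2^{-2}+c)+3\log r_1+3\log r_2$, which can be checked by hand.
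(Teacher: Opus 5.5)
You are following the paper's proof almost exactly: the same factorization $P_d=\Pi^3\sum_i r_i^{-2}$, the same substitution $h_i=r_ix_i$ (the paper differentiates along $r_i+sr_ix_i$), and the same weights ($p_i$ is the paper's $a_i$). You also reach the same quadratic form, make the same split according to whether some weight exceeds $1/2$, and use the same Cauchy--Schwarz weights $p_i^2/(3-6p_i)$.

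The gap is the step you leave as ``should follow''. That step is the entire content of the case $p_1>1/2$, so as written the proposal is not yet a proof. It does close as you predicted. Fix $x_2,\ldots,x_d$ and put $y=\sum_{i\ne1}p_ix_i$, $T=\sum_{i\ne1}(3-6p_i)x_i^2\ge0$ and $\sigma=\sum_{i\ne1}p_i^2/(3-6p_i)$, so that $y^2\le\sigma T$. Then $Q=(6p_1-3)x_1^2-4(p_1x_1+y)^2-T$ is a quadratic in $x_1$ with leading coefficient $-(4p_1^2-6p_1+3)<0$. Maximizing over $x_1$ gives
\[
Q\le -T+\frac{12(2p_1-1)}{4p_1^2-6p_1+3}\,y^2 .
\]
Now use $p_i\le1-p_1$ twice: once as $3-6p_i\ge6p_1-3>0$, and once as $\sum_{i\ne1}p_i^2\le(1-p_1)\sum_{i\ne1}p_i=(1-p_1)^2$. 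Together these give $\sigma\le(1-p_1)^2/\bigl(3(2p_1-1)\bigr)$, hence
\[
Q\le -T\Bigl(1-\frac{4(1-p_1)^2}{4p_1^2-6p_1+3}\Bigr)=-\frac{(2p_1-1)\,T}{4p_1^2-6p_1+3}\le0 .
\]
The paper gets the same scalar condition from a single Cauchy--Schwarz applied to $p_1x_1=\sum_ip_ix_i-y$. There the requirement $4p_1^2\ge(6p_1-3)(1+4\sigma)$ likewise collapses to $2p_1\ge1$.

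Your fallback is therefore unnecessary, and as formulated it would not have worked anyway. It would not prove the lemma as stated. It would also not cover the application: in the proof of Theorem~\ref{thm:inequality}, the exchange $w_e+s$, $w_f-s$ between two disjoint pairs $e,f$ moves four of the $W_i$ at once. Concavity along segments where only two coordinates vary does not handle that case.
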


\begin{proof}
For positive $r_i$,
\[
 P_d(r_1,\ldots,r_d)
 =\left(\prod_i r_i^3\right)\left(\sum_i r_i^{-2}\right), \quad \log P_d(r_1,\ldots,r_d)
 =3\sum_i\log r_i+\log(\sum_i r_i^{-2}).
\]
Let $a_i=r_i^{-2}/\left(\sum_j r_j^{-2}\right)$, they are positive and sum to one.
Fix any real numbers $x_1,\ldots,x_d$. Along the line with coordinates $r_i+s r_ix_i$, direct differentiation gives
\[
 \left.\frac{\partial^2}{\partial s^2}
 \log P_d(r_1+s r_1x_1,\ldots,r_d+s r_dx_d)\right|_{s=0}
 =\sum_i(-3+6a_i)x_i^2-4\left(\sum_i a_ix_i\right)^2.
\]
To obtain this formula, first differentiate
$3\sum_i\log r_i+\log(\sum_i r_i^{-2})$ along the line.
The first term contributes $-3\sum_i x_i^2$.
The second contributes $6\sum_i a_ix_i^2-4(\sum_i a_ix_i)^2$.
Since every $r_i$ is positive, every direction has the form $(r_ix_i)_i$.
It is therefore enough to show that
\begin{equation}\label{eq:quadratic-form}
 \sum_i(3-6a_i)x_i^2
 +4\left(\sum_i a_ix_i\right)^2\ge0.
\end{equation}

If every $a_i\le1/2$, then \eqref{eq:quadratic-form} holds.
Now suppose, after relabeling, that $a_1>1/2$.
For $i>1$, we have $a_i\le1-a_1<1/2$, so $3-6a_i>0$.
Apply the Cauchy--Schwarz inequality to
$a_1x_1=(\sum_i a_ix_i)-\sum_{i>1}a_ix_i$.
It gives
\[
 (a_1x_1)^2
 \le\left(1+\sum_{i>1}\frac{4a_i^2}{3-6a_i}\right)
 \left[
 \left(\sum_i a_ix_i\right)^2
 +\frac14\sum_{i>1}(3-6a_i)x_i^2
 \right].
\]
Hence the left side of \eqref{eq:quadratic-form} is at least
\[
 \left(
 3-6a_1+
 \frac{4a_1^2}{1+\sum_{i>1}4a_i^2/(3-6a_i)}
 \right)x_1^2, \ \text{where}
 \sum_{i>1}\frac{4a_i^2}{3-6a_i}
 \le\frac{4\sum_{i>1}a_i^2}{6a_1-3}
 \le\frac{4(1-a_1)^2}{6a_1-3}.
\]
The coefficient of $x_1^2$ is therefore at least
\[
 3-6a_1+\frac{4a_1^2}{1+4(1-a_1)^2/(6a_1-3)}
 =\frac{3(2a_1-1)^2}
        {1+(2a_1-1)+(2a_1-1)^2}>0.
\]
This proves \eqref{eq:quadratic-form} in both cases. Thus \(\log P_d\) has a nonpositive second derivative along every line segment in its domain. Hence \(\log P_d\) is concave.
This proves the lemma.
\end{proof}
\subsection{A family with threshold of order at least $n^{5/2}$}\label{sec:stationary}

\begin{thm}\label{thm:inequality}
Let $4\le d\le 25$ and $0\le w_{ij}=w_{ji}\le1$ for $i\ne j$.
If $\sum_{i<j}w_{ij}\ge\binom{d-1}{2}$, then
\begin{equation}\label{eq:inequality}
 P_d(W_1,\ldots,W_d)
 \ge(d-2)^{3(d-1)}
       \left(\sum_{i<j}w_{ij}-\binom{d-1}{2}\right).
\end{equation}
The inequality is strict when $\sum_{i<j}w_{ij}>\binom{d-1}{2}$.
At $\sum_{i<j}w_{ij}=\binom{d-1}{2}$, equality holds exactly
when one row is zero and all edges among the other $d-1$ vertices
have weight one.
\end{thm}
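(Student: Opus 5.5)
The plan is to use Lemma~\ref{lem:concavity} to push the weights to near-integral configurations, and then to compare $P_d$ with the right-hand side by an explicit analysis of the few remaining extremal shapes. Here $W_i=\sum_{j\ne i}w_{ij}$.

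\textbf{Reduction to near-integral weights.} Put $x:=\sum_{i<j}w_{ij}-\binom{d-1}{2}$. Since $\sum_{i<j}w_{ij}\le\binom d2$, we have $x\in[0,d-1]$. Equivalently, the total ``missing weight'' $\mu:=\sum_{i<j}(1-w_{ij})$ equals $d-1-x$.

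If some $W_i=0$, then every term of $P_d$ contains a factor $W_i$ or $W_i^3$, so $P_d=0$. In that case row $i$ is zero, hence $\sum w_{ij}\le\binom{d-1}{2}$ and $x\le 0$, so $x=0$. Equality then forces all edges off row $i$ to have weight one. This already produces the claimed equality configuration.

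Otherwise all $W_i>0$. The map $w\mapsto(W_1,\ldots,W_d)$ is linear, so by Lemma~\ref{lem:concavity} the function $w\mapsto\log P_d(W(w))$ is concave. By continuity it is quasi-concave on the polytope
\[
Q_x=\Bigl\{w\in[0,1]^{\binom{[d]}{2}}:\textstyle\sum w_{ij}=\binom{d-1}{2}+x\Bigr\},
\]
including its boundary. A quasi-concave function attains its minimum over a polytope at a vertex. Vertices of a box cut by one hyperplane have all coordinates in $\{0,1\}$ except at most one. So it suffices to prove \eqref{eq:inequality} when $w$ is the indicator of a graph $G$ on $[d]$ plus one extra pair of weight $\theta\in[0,1)$. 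Then $W_i=\deg_G(i)+\theta\,[i\in\text{extra pair}]$, and the missing weight is $\mu=d-1-x\le d-1$. In particular the complement $\overline G$ has at most $d-1$ edges, plus possibly a fractional one.

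\textbf{Comparison with the star.} The heuristic is that the worst case concentrates all missing weight at one vertex $1$. Then $W_1=x$ and $W_j\ge d-2$ for $j\ne1$, which gives
\[
P_d\ge W_1\prod_{j\ne1}W_j^3\ge x(d-2)^{3(d-1)}.
\]
This is exactly the right-hand side, with equality only when $x=0$ and the other $d-1$ vertices have full degree.

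For a general configuration I split according to $D:=\max_i\deg_{\overline G}(i)$, counting fractional weight.
\begin{itemize}
\item If $D$ is large, say the missing weight at vertex $1$ is $\mu_1$, then $W_1=d-1-\mu_1\ge x$ and the remaining missing weight $\mu-\mu_1$ is spread among the other vertices. Using the single term $W_1\prod_{j\ne1}W_j^3$, I bound $\prod_{j\ne 1}W_j^3$ below by $(d-2)^{3(d-1)}$ up to a factor involving $\mu-\mu_1$. That factor has to be recovered from the surplus $W_1-x=\mu-\mu_1$.
\item If $D$ is small, every $W_i$ is close to $d-1$. Then already the $d$ terms of $P_d$ give roughly $d(d-1)^{3d-2}$, which should dominate $(d-2)^{3(d-1)}(d-1)$ by a crude estimate.
\end{itemize}

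\textbf{Main obstacle.} The difficult step is the intermediate regime, where the trade-off between $W_1-x$ and the loss in $\prod_{j\ne1}W_j^3$ must be made exact. That loss is a factor of $\bigl(1-\tfrac{1}{d-2}\bigr)^{3}$ per unit of missing weight, which must be beaten by the gain $(W_1)/x$. This comparison is not uniform in $d$, which is presumably why the statement is restricted to $d\le25$.

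What I would try first is to reduce to finitely many cases. These are the unordered degree sequences of complements with at most $d-1$ edges plus one fractional edge. Since $P_d$ is monotone in each $W_i$, only the multiset of complement degrees matters, via the lower bound $W_i\ge d-1-\deg_{\overline G}(i)$. For each such case the inequality becomes a one-variable polynomial inequality in $\theta$. I would then verify these finitely many one-variable inequalities for $4\le d\le 25$, analytically where the margin is large and by a computer check (as the authors did with an R program) in the tight cases.

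Strictness for $x>0$, and the characterization of equality at $x=0$, then follow from the strictness of the above comparisons away from the star configuration.
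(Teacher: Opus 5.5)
Your route is essentially the paper's: Lemma~\ref{lem:concavity} reduces to weights with at most one fractional coordinate (you via quasi-concavity and the vertices of $Q_x$, the paper via shifting weight between two fractional edges), and the inequality is then checked by computer over degree signatures of complements with $1\le e(J)\le d-1$ edges, so the star and $D$-split heuristics are not needed. Two points in your finite check need tightening. Since $\theta$ enters only the two endpoints of the fractional edge, the signature must record those two degrees separately; dropping $\theta$ via $W_i\ge d-1-\deg_{\overline G}(i)$ would fail near the star. The paper also certifies each degree-six polynomial on $[0,1]$ exactly via nonnegative Bernstein coefficients with $B_6>0$, and $B_0>0$ when $e(G)>\binom{d-1}{2}$, which gives strictness as well.
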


\begin{proof}
We divide the proof into three cases.

\noindent\textbf{Case 1. $\sum_{i<j}w_{ij}=\binom{d-1}{2}$.}

Note that all $W_i\ge0$. If   $W_i>0$ for every $i\in[d]$, then
	$
	P_d(W_1,\ldots,W_d)>0$ and  \eqref{eq:inequality} holds.
Assume that $W_i=0$,
	 nonnegativity gives
	\[
	w_{ij}=0\quad(j\ne i),\qquad
	\sum_{\substack{a<b\\a,b\ne i}}w_{ab}=\binom{d-1}{2}-W_i=\binom{d-1}{2}.
	\]
	There are exactly $\binom{d-1}{2}$ terms in the last sum,
	each at most one. Hence
	$
	w_{ab}=1$ for every $a<b,\ a,b\ne i.
	$
	This proves  the equality statement for \eqref{eq:inequality}.

\noindent\textbf{Case 2. $\sum_{i<j}w_{ij}=\binom{d}{2}$.}

In this case, all weights $w_{ij}=1, i<j$.  Therefore \eqref{eq:inequality} holds since
	\[
P_d(W_1,\ldots,W_d)=	P_d(d-1,\ldots,d-1)
	=d(d-1)^{3d-2}>(d-2)^{3(d-1)}(d-1).
	\]

\noindent\textbf{Case 3. $\binom{d-1}{2}<\sum_{i<j}w_{ij}<\binom d2$.}

In this case,
	\[
	W_i=\sum_{i<j}w_{ij}-\sum_{\substack{a<b\\a,b\ne i}}w_{ab}
	\ge\sum_{i<j}w_{ij}-\binom{d-1}{2}>0
	\qquad(i\in[d]).
	\]
Fix the total weight $\sum_{i<j}w_{ij}$ in this interval.
The feasible set with this total weight is compact.
Choose a minimizer $\boldsymbol w$ such that, first,  $P_d(W_1,\ldots,W_d)$ attains a minimum, and second,
	$|\{ij:0<w_{ij}<1\}|$ is minimized.
	Suppose two distinct edges $e,f$ have weights in $(0,1)$.
	Define
	\[
	w_g(s)=
	\begin{cases}
	w_e+s,&g=e,\\
	w_f-s,&g=f,\\
	w_g,&g\notin\{e,f\},
	\end{cases}
	\qquad
	\max\{-w_e,w_f-1\}\le s\le\min\{1-w_e,w_f\}.
	\]
	Every $\boldsymbol w(s)$ is feasible. Moreover,
$
	W_i(s)=W_i+s\bigl(\mathbf1_{\{i\in e\}}-\mathbf1_{\{i\in f\}}\bigr)>0.
$
	By Lemma~\ref{lem:concavity},
	$\log P_d(W_1(s),\ldots,W_d(s))$ is concave.
	Thus at least one endpoint $s_0$ of the displayed interval satisfies
	\[
	P_d(W_1(s_0),\ldots,W_d(s_0))
	\le P_d(W_1,\ldots,W_d).
	\]
	Minimality forces equality, and then at least one of
	$w_e(s_0),w_f(s_0)$ belongs to $\{0,1\}$.
	Since all other coordinates are unchanged, we have
	\[
	|\{ij:0<w_{ij}(s_0)<1\}|
	<|\{ij:0<w_{ij}<1\}|,
	\]
	contrary to the choice of $\boldsymbol w$.
	Hence the chosen minimizing vector has at most one coordinate satisfying $0<w_{ij}<1$.

	Let $G$ consist of its weight-one edges, and let $uv$ be
the possible edge with weight $t\in[0,1)$.
If there is no fractional edge, choose any missing edge as $uv$
and  take $t=0$.
	Then
	\[
	e(G)+t=\sum_{i<j}w_{ij},\qquad
	\binom{d-1}{2}\le e(G)<\binom d2.
	\]
Let $a=\deg_G(u)$, $b=\deg_G(v)$, and let
$c_1,\ldots,c_{d-2}$ be the other degrees.
	Up to a permutation, the weighted degrees are
	$
	(W_1,\ldots,W_d)=(a+t,b+t,c_1,\ldots,c_{d-2}).
$
The terms of $P_d$ indexed by $u,v$ sum to
\[
\left(\prod_{h=1}^{d-2}c_h^3\right)
\bigl((a+t)(b+t)^3+(b+t)(a+t)^3\bigr).
\]
The other terms sum to
\[
\left(\sum_{h=1}^{d-2}c_h\prod_{j\ne h}c_j^3\right)
(a+t)^3(b+t)^3.
\]
Thus the difference between the two sides of \eqref{eq:inequality} is
	\begin{align}
	A_{G,uv}(t):={}&
	\left(\prod_{h=1}^{d-2}c_h^3\right)
	(a+t)(b+t)\bigl((a+t)^2+(b+t)^2\bigr)+\left(\sum_{h=1}^{d-2}c_h
	\prod_{\substack{1\le j\le d-2\\j\ne h}}c_j^3\right)
	(a+t)^3(b+t)^3\notag\\
	&-(d-2)^{3(d-1)}
	\left(e(G)+t-\binom{d-1}{2}\right).
	\label{eq:certificate-poly}
	\end{align}
It has degree at most six.
	Write $A_{G,uv}(t)=\sum_{j=0}^6A_jt^j$.
	The binomial identity
	\begin{align*}
t^j=t^j\sum_{h=0}^{6-j}\binom{6-j}{h}t^h(1-t)^{6-j-h}=	\sum_{i=j}^6\frac{\binom ij}{\binom6j}
	\binom6i t^i(1-t)^{6-i}
	\end{align*}
	gives
	\begin{equation}\label{eq:bernstein}
	\begin{split}
	A_{G,uv}(t)=\sum_{i=0}^6B_i\binom6i t^i(1-t)^{6-i},\quad
	B_i=\sum_{j=0}^i A_j\frac{\binom ij}{\binom6j}
	\qquad(0\le i\le6).
	\end{split}
	\end{equation}
Each function $\binom6i t^i(1-t)^{6-i}$ is nonnegative on $[0,1]$.
Hence $B_i\ge0$ for all $i$ implies $A_{G,uv}(t)\ge0$ there.
Moreover, $B_6>0$ implies strict positivity for every $t>0$.
At $t=0$, the value is $B_0$.

The program in Appendix enumerates every degree
signature arising from a graph $G$ and a distinguished missing edge.
It computes $60B_i$ in \eqref{eq:bernstein} by exact integer arithmetic. For each resulting polynomial it verifies
\[
B_i\ge0\quad(0\le i\le6),\qquad B_6>0,
\qquad
B_0>0\quad\text{if }e(G)>\binom{d-1}{2}.
\]
For $0<t\le1$, \eqref{eq:bernstein} yields
	$
	A_{G,uv}(t)\ge B_6t^6>0.
	$
	If $t=0$, then $e(G)=\sum_{i<j}w_{ij}>\binom{d-1}{2}$, and
	$
	A_{G,uv}(0)=B_0>0.
$
	Consequently, at every fixed total weight in Case~3, the chosen
minimizing vector satisfies the strict form of
	\eqref{eq:inequality}.

The polynomial \eqref{eq:certificate-poly} is invariant under
exchanging $a,b$ and permuting $c_1,\ldots,c_{d-2}$.
Thus it depends only on the sorted endpoint degrees and the sorted
remaining degrees.
Appendix proves that its enumeration includes
every such degree signature and records the results for all
$4\le d\le25$.
This completes the proof.

\end{proof}

\begin{proof}[Proof of Theorem~\ref{thm:main}]
Fix $4\le d\le 25$.
Theorem~\ref{lem:critical} gives $\zeta_{L_d}=0$.
It remains to verify \eqref{eq-link-rigidity}.

Let $H$ be an $n$-vertex $3$-graph with $\Ex(n,L_d)+q$ edges
and $\N_{L_d}(H)=h_{L_d}(n,q)$. Take a link vector $\boldsymbol w$ satisfying
$R_{L_d}(\boldsymbol w)\le
\alpha_{L_d}\max\{\eta(\boldsymbol w),0\}$.
Suppose first that $\eta(\boldsymbol w)\le0$, then
$R_{L_d}(\boldsymbol w)=0$ since every term in \eqref{eq-link-polynomial} is nonnegative.
On the other hand, both terms on the right of \eqref{eq:counts} are nonnegative.
Hence
$
 \sum_iw_{ii}=0$ and
 $P_d(W_1+w_{11},\ldots,W_d+w_{dd})=0$.
Similarly, by \eqref{eq:polynomial},  there is  $W_i+w_{ii}=0$ for some $i$.

Now suppose that $\eta(\boldsymbol w)>0$.
By \eqref{eq:difference},
\[
 P_d(W_1+w_{11},\ldots,W_d+w_{dd})
 \le(d-2)^{3(d-1)}
       \left(\sum_{i<j}w_{ij}-\binom{d-1}{2}\right).
\]
If $\sum_{i<j}w_{ij}<\binom{d-1}{2}$, the right side is negative.
This is impossible.
If $\sum_{i<j}w_{ij}>\binom{d-1}{2}$, then
\[
 P_d(W_1+w_{11},\ldots,W_d+w_{dd})
 \ge P_d(W_1,\ldots,W_d)
 >(d-2)^{3(d-1)}
       \left(\sum_{i<j}w_{ij}-\binom{d-1}{2}\right).
\]
The first inequality uses the nonnegative coefficients of $P_d$, and the
 second is Theorem~\ref{thm:inequality}.
This is  also contradiction.
Therefore $\sum_{i<j}w_{ij}=\binom{d-1}{2}$,
and then $P_d(W_1+w_{11},\ldots,W_d+w_{dd})=0$.
Again $W_i+w_{ii}=0$ for some $i$.

In both cases,   $W_i=0, w_{ii}=0$ for some $i$.
Thus $W_i+w_{ii}/2=0$, as required by \eqref{eq-link-rigidity}.
Apply Theorem~\ref{thm:criterion}.
It gives  $T^{(3)}(n,d)\subseteq H$, and then $h_{L_d}(n,q)=\N_{L_d}(H)\ge t_{L_d}(n,q)$.
 The definition gives $h_{L_d}(n,q)\le t_{L_d}(n,q)$.
\end{proof}

\subsection{A family with threshold of order $n^2$}\label{sec:quadratic}
Throughout this section, fix $d\ge4$. Let $L=\K_3(F_{2,5,d})$,
and write $l=|V(L)|$.
The proof changes the part sizes and then adds a random set of triples.
We count copies with one, two, and at least three added edges separately.

Start with the balanced complete $d$-partite 3-graph $T^{(3)}(n,d)$, and  let $V(T^{(3)}(n,d))=V_1\cup\cdots\cup V_d$.
Let $\mathcal A_1$ be the triples with two vertices in  $V_1$
and one vertex outside it. Then
\begin{equation}\label{eq:pool-size}
 |\mathcal A_1|
 =\binom{|V_1|}{2}(n-|V_1|)
 =\frac{d-1}{2d^3}n^3+O_d(n^2).
\end{equation}
We count copies in the graph $T^{(3)}(n,d)\cup \mathcal A_1$ containing all crossing triples
and all triples of $\mathcal A_1$.
\begin{lem}\label{two-edge-count}
The number of copies of $L$ with exactly two edges of $\mathcal A_1$ is $\frac{5(d-1)^2}{d^3(d-2)}\alpha_L n^l+O_L(n^{l-1}).$
\end{lem}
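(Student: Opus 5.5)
The plan is to count labelled maps $\varphi:V(L)\to V(T^{(3)}(n,d))$ that are injective and send every edge of $L$ to either a crossing triple or a triple of $\mathcal A_1$. Exactly two edges must land in $\mathcal A_1$. Dividing by $|\Aut(L)|$ at the end gives the copy count. As in the proofs of Theorem~\ref{thm-c} and Lemma~\ref{lem-link-count}, I will first record for each vertex of $L$ only the part $V_j$ containing its image, i.e.\ a colour map $\psi:V(L)\to[d]$. Each admissible $\psi$ then contributes $(n/d)^l+O_L(n^{l-1})$ maps. Maps identifying two vertices number $O_L(n^{l-1})$ and are absorbed in the error term. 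So the statement reduces to a finite enumeration: count the colourings $\psi$ for which exactly two edges of $L$ get a non-crossing pattern, each of these has pattern ``two in colour $1$, one outside''. Then compare the total weight with $\alpha_L$ via the analogue of \eqref{eq:alpha-count} for $F_{2,5,d}$, namely $\alpha_L|\Aut(L)|=2(d-1)!(d-2)^{e(F)-1}/d^{l-3}$, where $F=F_{2,5,d}$.

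The core step is to classify such $\psi$. An edge $ab$ of $F$ expanded by $z$ lies in $\mathcal A_1$ in one of two ways. Either (i) $\psi(a)=\psi(b)=1$ and $\psi(z)\ne1$, or (ii) exactly one of $a,b$ has colour $1$, $z$ has colour $1$, and the other endpoint has colour $\ne1$. I will use the rigidity fact from the proof of Corollary~\ref{cor-multipartite-threshold}: every proper $d$-colouring of $F-uv$ uses the original parts as colour classes, and $uv$ is the unique critical edge. Restricted to original vertices, $\psi$ has at most two monochromatic edges, all in colour $1$. Any monochromatic pair other than a single copy of $uv$ would force at least three monochromatic edges, because of the $K_{d+1}$'s through $u,v$ and the $5$-vertex parts, exactly as in the argument that deleting a non-$uv$ edge leaves a $K_{d+1}$. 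Hence the original colouring is the standard one, with $u,v$ in colour $1$, giving one type-(i) edge, and the remaining bad edge is of type (ii). That edge is some $ux$ or $vx$ with $x$ in another original part, and its expansion vertex is recoloured to $1$. I also need to rule out the alternative where no original edge is monochromatic and both bad edges are of type (ii). This would make the original colouring proper, which is impossible since $\chi(F)=d+1$.

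Counting: the type-(ii) edge can be chosen in $2\cdot5(d-1)$ ways, since $u$ and $v$ each have $5(d-1)$ neighbours outside $\{u,v\}$. Its expansion colour is forced to $1$ rather than having $d-2$ admissible choices, which costs a factor $1/(d-2)$. The rest of the colouring is counted exactly as in $\alpha_L$. The doubled part is fixed to $V_1$ instead of being free, and the three vertices of the $uv$-triple are now free rather than fixed. Together these account for the factor $(n/d)^3$ and the powers of $d$. The extra restriction that $x$'s colour differs from $1$, which it already does, along with the part-assignment bookkeeping, should yield the second factor $(d-1)$. Assembling everything should give $\frac{5(d-1)^2}{d^3(d-2)}\alpha_L n^l$. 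I would double-check this constant against the $\mathcal A_1$ pool size \eqref{eq:pool-size}.

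The main obstacle is the classification step: proving that no other colouring patterns yield exactly two $\mathcal A_1$-edges. In particular I must exclude configurations where a non-$u,v$ original vertex joins colour $1$, and configurations where a type-(ii) edge avoids $u,v$. For each I will exhibit a third forced non-crossing edge using a $K_{d+1}$ through the relevant vertices, mirroring the rooted-vertex argument in Proposition~\ref{prop:link-count}. Compared with that, the constant bookkeeping is routine.
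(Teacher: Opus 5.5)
Your proposal is correct and follows the same skeleton as the paper's proof. You colour vertices by host part, show that the only surviving colouring of the original vertices is the standard one with $u,v$ in colour $1$, and take the second added edge to be one of the $10(d-1)$ original edges at $u$ or $v$ whose expansion vertex is forced into $V_1$, at a cost of $1/(d-2)$.

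\textbf{Classification step.} Here you argue differently from the paper, and both arguments are valid. The paper notes that a colour $j\ne 1$ cannot meet two original parts, since that would create a non-crossing triple outside $\mathcal A_1$. It then observes that if colour $1$ meets two original parts, some original part is entirely of colour $1$, which gives at least three monochromatic edges. Your $K_{d+1}$ route also works once made precise:
\begin{itemize}
\item each of the $5^{d-1}$ cliques $\{u,v,w_2,\ldots,w_d\}$ must contain a monochromatic edge;
\item an edge other than $uv$ lies in at most $5^{d-2}$ of these cliques, so two such edges cannot cover them all, and hence $uv$ is monochromatic;
\item any further colour-$1$ original vertex is adjacent to both $u$ and $v$, which would give a third monochromatic edge;
\item rigidity of proper $d$-colourings of $F-uv$ then gives the standard colouring.
\end{itemize}

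\textbf{The constant.} Your bookkeeping attributes a factor to the wrong source: the colour of $x$ contributes nothing new. Relative to $\alpha_L$, the missing factor $\frac{d-1}{2d^3}$ has three sources:
\begin{itemize}
\item the $d-1$ admissible colours of the expansion vertex of $uv$, which is pinned in the count defining $\alpha_L$ but free here;
\item the factor $2$ for the order of $u,v$, which is built into $\alpha_L$ but absorbed once $u,v$ range freely over $V_1$;
\item the three now-free vertices of the $uv$-triple.
\end{itemize}
Together these give exactly the leading coefficient of $|\mathcal A_1|/n^3$. This is why the paper simply writes the one-added-edge count as $|\mathcal A_1|\,c(n,L)=\frac{d-1}{2d^3}\alpha_L n^l+O_L(n^{l-1})$ and multiplies by $\frac{10(d-1)}{d-2}$. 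That normalisation also avoids needing the explicit formula for $\alpha_L|\Aut(L)|$.
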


\begin{proof}
Consider a copy $L'$ that uses at most two triples of $\mathcal A_1$.
Let $U_1=\{u,v\},U_2,\ldots,U_d$ be the original parts of the copy,
where $|U_j|=5$ for $j\ge2$.
Color each vertex of $L'$ by the index of its host part $V_i$.
If an original edge is monochromatic, its expanded triple is
non-crossing and must belong to $\mathcal A_1$.
Its two endpoints therefore have color $1$.
Since distinct original edges give distinct expanded triples,
there are at most two monochromatic original edges.

A color other than $1$ cannot occur in two original parts,
since vertices in different original parts are adjacent.
Suppose color $1$ occurs in two original parts.
The other $d-1$ colors are each confined to one original part.
Thus at least one of the $d$ original parts has only color $1$.
If this part has size five, its edges to a color-$1$ vertex in
another part give five monochromatic edges.
If it is $U_1=\{u,v\}$, its internal edge and its two edges to
such a vertex give three monochromatic edges.
Both conclusions contradict the bound of two.
Hence every color is confined to one original part.
There are $d$ nonempty original parts and only $d$ colors,
so each original part has one color.
The edge $uv$ forces the color of $U_1$ to be $1$.
Its expansion therefore belongs to $\mathcal A_1$.
Consider any other original edge.
If it joins $U_1$ to a part of color $j\ne1$, its expansion
is crossing when the new vertex has any of the $d-2$ colors
outside $\{1,j\}$.
It belongs to $\mathcal A_1$ precisely when the new vertex
has color $1$.
There are
\[
|U_1|\sum_{j=2}^d|U_j|=2\cdot5(d-1)=10(d-1)
\]
such original edges.
An edge between two other original parts has no endpoint
of color $1$.
Its expansion therefore cannot contain two vertices of color $1$,
so it cannot belong to $\mathcal A_1$.

The number of copies using just the expansion of $uv$ as an
added edge is
$\frac{d-1}{2d^3}\alpha_L n^l+O_L(n^{l-1}).
$
Indeed, there are \eqref{eq:pool-size} choices for that triple,
and each has single-edge count
$\alpha_Ln^{l-3}+O_L(n^{l-4})$.
At equal part sizes, each allowed color assignment has the same
leading contribution. To obtain exactly two added edges, choose
one of the $10(d-1)$ edges described above and replace its
$d-2$ crossing color choices by the one additional choice.
Thus the number of copies with exactly two added edges is
$
 \frac{5(d-1)^2}{d^3(d-2)}\alpha_L n^l+O_L(n^{l-1}).$
This argument may first be made for labelled copies.
Division by $|\Aut(L)|$ is the same for both counts.
\end{proof}

Note that the same leading term holds if the part sizes change by a fixed
number of vertices. For each fixed color assignment, the count
is a product of falling factorials in the part sizes.
A bounded change in the sizes changes a degree-$l$ term by
$O(n^{l-1})$. The constant in this error may depend on the
number of moved vertices.

\begin{proof}[Proof of Theorem~\ref{thm:quadratic}]
The lower bound $q_L(n)=\Omega_L(n^2)$ follows from
Theorem~\ref{thm1}.
By Theorem~\ref{lem:critical},
\[
 \zeta_L=d\alpha_L\bigl((2-5)(1-5)-2\bigr)=10d\alpha_L.
\]
We shall construct a graph $H$ with $\Ex(n,L)+q$ edges and fewer than
$qc(n,L)$ copies for some $q=O_L(n^2)$.

Let $A=q/n^2$, and let $a=\left\lfloor\frac{10d(d-1)}{d-2}A\right\rfloor.$
Here $a$ is the number of vertices to be moved.
For fixed $A$, both $a$ and $d$ are independent of $n$.
Remove $a$ vertices from  $V_1$ of a balanced partition. The move changes the first part $V_1$ by $-a$.
It changes the other parts by nonnegative integers whose
sum is $a$ and whose values differ by at most one.
Call the new complete $d$-partite $3$-graph $K'$, and let
$
 \tau=e(T^{(3)}(n,d))-e(K').
$
Thus $\tau\ge0$.
If the part sizes are $n/d+y_i$ with $\sum_i y_i=0$, then
\[
 e(K^{(3)}(V_1,\ldots,V_d))
 =\binom d3(n/d)^3
 -\frac{d-2}{2d}n\sum_i y_i^2
 +\sum_{i<j<h}y_iy_jy_h.
\]The increases in the other $d-1$ part sizes have squared sum
 $a^2/(d-1)+O_d(1)$.
It follows that the change in $\sum_i y_i^2$ is
$a^2+a^2/(d-1)+O_d(a+1)$.
The cubic terms change by $O_d((a+1)^3)$.
Consequently,
\begin{equation}\label{eq:crossing-loss}
 \tau=\frac{d-2}{2(d-1)}a^2n
       +O_d((a+1)n+(a+1)^3).
\end{equation} Choose a uniform $(q+\tau)$-subset of the new $\mathcal A_1$.
Add it to $K'$, and call the resulting graph $H$.
For fixed $L,A$, we have $q+\tau=O_{L,A}(n^2)$, whereas
$|\mathcal A_1|=\Theta_L(n^3)$.
Thus this choice is possible for large $n$.
The extremal theorem  \ref{extremal-graph} gives
\[
 e(H)=e(K')+q+\tau=e(T^{(3)}(n,d))+q=\Ex(n,L)+q.
\]

We also need the change in the single-edge count.
In the original balanced partition, any added triple in $\mathcal A_1$ has
count $c(n,L)+O_L(n^{l-4})$.
This follows from Lemma~\ref{lem-cs}, since all simple
patterns have leading coefficient $\alpha_L$.
Thus, uniformly over all triples in the new $\mathcal A_1$,
\begin{align}\label{eq:single-edge-change}
 \N_L(K'\cup\{e\})
 =&c(n,L)-a\zeta_Ln^{l-4}+O_L\bigl(n^{l-4}+(a+1)^2n^{l-5}\bigr)\notag\\
 =
 &c(n,L)-
 10d\alpha_La n^{l-4}
  +O_L\bigl(n^{l-4}+(a+1)^2n^{l-5}\bigr).
\end{align}
Each selected triple contributes the copies using it as their
only added edge. These families are disjoint.
Summing \eqref{eq:single-edge-change}, then subtracting $qc(n,L)$,
gives
\[
 \tau c(n,L)-10d\alpha_La(q+\tau)n^{l-4}
 +O_L\bigl((q+\tau)n^{l-4}
           +(q+\tau)(a+1)^2n^{l-5}\bigr).
\]
Use \eqref{eq:crossing-loss} and
$c(n,L)=\alpha_Ln^{l-3}+O_L(n^{l-4})$.
Since $a=O_L(A)$, this expression equals
\begin{equation}\label{eq:one-edge-comparison}
 \alpha_L\left(\frac{d-2}{2(d-1)}a^2-10dAa\right)n^{l-2}
 +O_L(A+1)n^{l-2}+O_{L,A}(n^{l-3}).
\end{equation}
The constant in $O_L(A+1)$ does not depend on $A$.
The terms with higher powers of $a$ are included in
$O_{L,A}(n^{l-3})$.

A fixed copy using $j$ distinct added triples is present with
probability
$
 \frac{(q+\tau)_j}{(|\mathcal A_1|)_j}
 \le\left(\frac{q+\tau}{|\mathcal A_1|}\right)^j.
$
For $j=2$, use Lemma~\ref{two-edge-count} in the new partition.
Together with \eqref{eq:pool-size}, it bounds the expected count by
\begin{align}
 &\left(\frac{5(d-1)^2}{d^3(d-2)}\alpha_L n^l
             +O_{L,A}(n^{l-1})\right)
       \left(\frac{q+\tau}{|\mathcal A_1|}\right)^2
 =\frac{20d^3}{d-2}\alpha_L A^2n^{l-2}
       +O_{L,A}(n^{l-3}).
 \label{eq:beta}
\end{align}
There are $O_L(n^l)$ potential copies in total.
For any copy using at least three added triples, the probability
that three prescribed ones are all selected is $O_{L,A}(n^{-3})$.
Thus all copies using at least three added triples have expected
count $O_{L,A}(n^{l-3})$.
Combine these estimates with \eqref{eq:one-edge-comparison} and \eqref{eq:beta}.
The coefficient of $\alpha_Ln^{l-2}$, apart from the
$O_L(A+1)$ term, is
\begin{align*}
 \frac{d-2}{2(d-1)}a^2&-10dAa+\frac{20d^3}{d-2}A^2\\=
 &\frac{d-2}{2(d-1)}
    \left(a-\frac{10d(d-1)}{d-2}A\right)^2
 -\frac{10d^2(3d-5)}{d-2}A^2.
\end{align*}
By the choice of $a$, we have
\begin{equation}\label{eq:comparison}
 \begin{split}
 \EE(\N_L(H))-qc(n,L)
 \le{}&
 \left(-\frac{10d^2(3d-5)}{d-2}\alpha_L A^2
             +O_L(A+1)\right)n^{l-2}\\
 &+O_{L,A}(n^{l-3}).
 \end{split}
\end{equation}
The coefficient of $A^2$ is negative for every $d\ge4$.
Then for sufficient large  $n$, choose $A_d$ large enough that the right side of \eqref{eq:comparison} is negative for all $A\ge A_d$.
Some choice of $H$ therefore has fewer than $qc(n,L)$ copies.
Our construction gives $h_L(n,q)< qc(n,L)\le t_L(n,q)$.
Thus $q_L(n)<q=O_L(n^2)$.
Together with the cited lower bound, this proves the theorem.
\end{proof}

\appendix
\section{Proof of the extremal theorem}\label{A}
	
	\begin{proof}[Proof of Theorem~\ref{extremal-graph}]
		Suppose that $n$ is large enough.	Fix a sequence of positive constants
		\[
		c_1 \gg c_2 \gg c_3 \gg c_4 \gg c_5,
		\]
		each chosen sufficiently large relative to the preceding ones.
		Let $L\in \mathcal{F}(k,d)$ be a $k$-graph with $L=\mathcal{K}^{(k)}(F)$, and let $f:=|V(F)|$,  $m:=|E(F)|$ and  $l:=|V(L)|=f+m(k-2)$.
		Let  $G$ be a maximum $L$-free $k$-graph on $n$ vertices. 
		Since $T^{(k)}(n,d)$ is $L$-free, by definition,
		$
		e(G) \ge e(T^{(k)}(n,d)).
		$
		Since $n$ is sufficiently large, Theorem~\ref{thm-stable} implies that $G$ is $c_5 n^k$-close to $T^{(k)}(n,d)$.	Partition $V(G)$ into $V_1 \cup \cdots \cup V_d$ so that the number $h =\sum_{i\in[k]}ih_i$ 
		is maximized, where	$h_i$ be the number of edges of $G$ intersecting exactly $i$ of the parts from $V_1 \cup \cdots \cup V_d$.
		Let $\prod_{i=1}^dV_i$ be the complete $d$-partite $k$-graph with parts $V_1,\ldots,V_d$. Then clearly $h \ge k|E(G)\cap E(\prod_{i=1}^dV_i) |$.	
		By the maximality of $h$, we also have
		$
		h \ge k(|E(G)| - c_5 n^k),
		$
		while on the other hand,
		$
		h \le k|E(G)| - |E(G) \setminus E(\prod_{i=1}^dV_i)|.
		$
		Combining these inequalities, we obtain
		$
		|E(G) \setminus E(\prod_{i=1}^dV_i)| \le c_5 k n^k.
		$
		
		Define the set of missing edges $M = E\big(\prod_{i=1}^d V_i\big) \setminus E(G)$ and the set of bad edges $B = E(G) \setminus E\big(\prod_{i=1}^d V_i\big)$, so $|B|\le c_5 k n^k$. We claim $|M| \le |B|$, otherwise 
		$e(G)= e(\prod_{i=1}^dV_i)+|B|-|M|< e(\prod_{i=1}^dV_i)$, a	contradiction. If $M=\emptyset$ and $B=\emptyset$, then  
		$G =\prod_{i=1}^dV_i$, we are done since $T^{(k)}(n,d)$ is  the complete $d$-partite $k$-graph with the largest number of edges. 
		Next we assume that $|B|\ge1$.	
		Since 	$e(\prod_{i=1}^dV_i) \ge e(T^{(k)}(n,l)) - c_5 k n^k$. 
		By  Lemma \ref{lem-st}, it follows that each part $V_i$ satisfies
		$
		\frac{n}{2d}\le |V_i| \le \frac{2n}{d}.
		$
		
		For pairs of vertices $x,y$ from different parts $V_i$, call $\{x,y\}$ \emph{sparse} if $d_G(xy)\le ln^{k-3}$, and \emph{dense} otherwise.
		Note that there are fewer than $c_4 n^2$ sparse pairs.  Otherwise, these sparse pairs produce at least  
		\begin{align*}
		c_4 n^2\left( \left( {n\over 2d}\right) ^{k-2}-ln^{k-3}\right) \ge c_4\left( {n\over 2d}\right) ^{k} 
		\end{align*}
		missing edges.  Since each missing edge can be generated by at most  ${k\choose 2}$ sparse pairs, we derive that $|M|\ge  c_4 ({n\over 2d})^{k}/{k\choose 2},$
		leading to a contradiction with $|M^*|\le c_5 k n^k$.
		
		We call a vertex pair from a bad edge a \emph{bad pair}  if  it is contained in  same partition class. 
		\begin{clm}\label{clm-bad}
			Let
			$A := \big\{ v \in V(G) \mid \text{the number of sparse pairs containing } v \text{ is at least } c_1 n \big\}$, then we have every bad pair is intersects $A$.	
		\end{clm}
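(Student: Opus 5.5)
We argue by contradiction. Suppose a bad pair $\{x,y\}\subseteq V_i$, coming from a bad edge $e\in B$, has $x,y\notin A$. Then each of $x$ and $y$ lies in fewer than $c_1n$ sparse pairs. We will embed a copy of $L$ in $G$ that uses $e$ as the expansion of a critical edge $uv$ of $F$, with $u\mapsto x$ and $v\mapsto y$. This contradicts the assumption that $G$ is $L$-free.

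First fix a critical edge $uv$ and a proper $d$-colouring $U_1',\dots,U_d'$ of $F-uv$, with $u,v$ in colour class $i$. For each $j\in[d]$ we want a large pool $Z_j\subseteq V_j\setminus e$ of vertices $z$ such that $\{x,z\}$ and $\{y,z\}$ are dense whenever $j\neq i$. Sparse pairs at $x$ or $y$ remove at most $2c_1n$ vertices, which is small compared with $|V_j|\ge n/(2d)$ since $c_1$ is small in the hierarchy. We then restrict further to vertices $z$ lying in few sparse pairs overall. Since there are fewer than $c_4n^2$ sparse pairs, all but $O(\sqrt{c_4}\,n)$ vertices lie in at most $\sqrt{c_4}\,n$ sparse pairs. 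So every pool keeps linear size.

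Next we embed the original vertices greedily. We place $U_j'\setminus\{u,v\}$ in $Z_j$ one vertex at a time, and each new vertex must form dense pairs with all previously placed original vertices in other parts. Each earlier vertex forbids at most $\sqrt{c_4}\,n$ candidates, and there are only $f$ original vertices, so a choice always exists.

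We then realize the expanded edges. For every original edge $ab\ne uv$ the pair $\{\varphi(a),\varphi(b)\}$ is dense, so it lies in more than $ln^{k-3}$ edges of $G$. At most $l\binom{n}{k-3}$ of these edges meet the at most $l$ vertices already used, so one of them avoids all used vertices. We take its remaining $k-2$ vertices as the expansion vertices of $ab$. The edge $uv$ itself is expanded by $e$; the vertices of $e\setminus\{x,y\}$ are excluded from the start. Processing the $m-1$ remaining edges in turn builds a copy of $L$ in $G$, which is the desired contradiction.

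The main obstacle is the quantitative bookkeeping in the extension step. The density threshold $ln^{k-3}$ is exactly what allows each extension to avoid the $l$ used vertices, so the definition of dense pairs was chosen for this purpose. One detail needs care: edges through a dense pair may not be crossing, but that does not matter, since $L$ only needs to appear in $G$ as a subgraph. The other point to check is that the constants satisfy $c_1,\sqrt{c_4}\ll 1/(2d\,l)$, so that the pools stay nonempty throughout the greedy embedding.
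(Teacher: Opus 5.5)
Your proof is correct, and it takes a different route from the paper's.

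The paper argues by counting. It considers all $\Omega(n^{f-2})$ choices of sets $U_j\subseteq V_j\setminus e$ for the remaining original vertices. Each choice must contain a sparse pair, since otherwise the same greedy extension you describe produces $L$. The bound of fewer than $c_4n^2$ sparse pairs shows that at most half of the choices can be blamed on sparse pairs avoiding $\{x_1,x_2\}$. A sparse pair through $x_1$ or $x_2$ lies in at most $n^{f-3}$ choices, so together $x_1,x_2$ lie in at least $2c_1n$ sparse pairs.

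You instead argue by contradiction and build one explicit embedding. You replace the averaging step by a Markov-type pruning of vertices whose sparse degree exceeds $\sqrt{c_4}\,n$, and then make greedy choices inside the remaining pools.

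What each route buys:
\begin{itemize}
\item Your version is constructive and only needs $2c_1$ to be a small fraction of $1/(2d)$. The paper's counting needs $c_1\le 1/(8f!(2d)^{f-2})$.
\item Your proper $d$-colouring of $F-uv$, with $u,v$ in class $i$, makes explicit that every pair you need to be dense joins two different parts. That is the only setting in which the paper defines sparse and dense pairs. The paper's sketch asks for all pairs among the chosen vertices to be dense, including same-part ones.
\item The paper's version gives the lower bound on the sparse degree at $\{x_1,x_2\}$ directly and needs no separate pruning step.
\end{itemize}

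One small wording fix. When you extend a dense pair $\{p,q\}$, the edges to discard are those meeting a used vertex other than $p$ and $q$. Every edge through the pair meets $p$ and $q$ themselves. There are at most $(l-2)\binom{n}{k-3}<ln^{k-3}$ such edges, and this is the quantity your density threshold beats.
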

		Choose a bad edge
		$
		e\in B
		$. 
		Take an arbitrary bad pair $\{x_1,x_2\}$ from $e$, where, without loss of generality,
		$
		x_1,x_2\in V_1
		$.
		Now choose vertex set $
		U_i\in V_i\setminus e$ for each $
		1\le i\le d.
		$
		Observe that, if every pair $\{a,b\}$ where
		$
		a\in \{x_1,x_2\}\cup(\bigcup_{i=1}^d U_i)
		$ and $b\in
		\bigcup_{i=1}^d U_i
		$
		is dense. 
		Then, proceeding greedily, we can always select an edge of $G$ covering this pair and avoiding all previously chosen vertices in a partial embedding.
		Conclusion, this extend the bad edge $e$ and obtain a copy of $L$ in $G$ with original vertex set $\{x_1,x_2\}\cup(\bigcup_{i=1}^d U_i)$, contradicting the assumption that $G$ is $L$-free.
		Therefore, for every choice of
		$$
		U_1\times\cdots\times U_d\subseteq
		(V_1\setminus e)\times\cdots\times(V_d\setminus e),
		$$
		there must exist a sparse pair $\{u,v\}\neq\{x_1,x_2\}$  satisfying $
		u\in \{x_1,x_2\}\cup(\bigcup_{i=1}^d U_i)
		$  and $v\in
		\bigcup_{i=1}^d U_i
		$.
		Using
		$
		|V_i|
		\ge
		\frac{n}{2d}
		$, we have the total number of such choice of
		$
		U_1\times\cdots\times U_d$ is at least
		$$\prod_{i=1}^d{\frac{n}{2d}-k\choose |U_i|}
		\ge {1\over 2f!}\left(\frac{n}{2d}\right)^{\sum_{i\in[d]}|U|_i}={1\over 2f!}\left(\frac{n}{2d}\right)^{f-2}.
		$$
		On the other hand, 
		if the sparse pair is disjoint from
		$
		\{x_1,x_2\},
		$
		then after fixing this pair there remain at most
		$
		n^{l-4}
		$
		ways to choose the other vertices. Recall that we have less than $c_4n^2 $ sparse pair, by choosing $c_4$ sufficiently small 
		we obtain
		$c_4n^2 \cdot n^{f-4}<{1\over 4f!}\left(\frac{n}{2d}\right)^{f-2}.
		$ It follows that at least half of the 
		all choices of
		$
		U_1\times\cdots\times U_d$, the resulting sparse pair intersects  $\{x_1,x_2\}$. 
		Since each one of such  sparse pair,
		it can arise from at most
		$
		n^{l-3}
		$
		choices. It follows that at least 
		\[
		{1\over 4f!}\left(\frac{n}{2d}\right)^{f-2}
		n^{-(f-3)}
		\ge 2c_1n,
		\] sparse pair intersects  $\{x_1,x_2\}$, and then
		one of the vertices of $x_1,x_2$ belongs to $A$.	
		This completes the proof of the Claim.

		We now count the missing edges incident with vertices of $A$. Note that $A$ intersects at least ${|A|c_1n\over 2}$ sparse pairs, and every sparse pair generates at least
		$
		\left( 
		\frac{n}{2d}
		\right)^{k-2}-ln^{k-3}
		$
		missing edges. Furthermore,  every missing edge contains at most
		$
		\binom{k}{2}
		$
		sparse pairs.
		Consequently, the number of missing edges is at least
		\begin{align*}
		{|A| c_1n\over 2}\cdot \left( \left(
		\frac{n}{2d}
		\right)^{k-2}-ln^{k-3}\right) \cdot\binom{k}{2}^{-1}\ge 	|A|\cdot c_2 n^{k-1}.
		\end{align*}
		So $|B|\ge|M|\ge |A|\cdot c_2 n^{k-1}$.	Let	$\mathcal B=\{(e,\{x,y\}):e\in B,\, x,y\in e ,\, \{x,y\} \ is\ a\ bad\ pair
		\}$, then $|\mathcal B|
		\ge
		|A|c_2n^{k-1}
		$,
		since each bad edge contains a bad pair.
		On the other hand, every bad pair intersects $A$. Let $B(x)$ denote the set of  edges in $B$ that contain vertex $x$ and  another vertex from the same  partition class as $x$.
		Fixing a vertex $x$ and an edge $e\in B(x)$, there are at most	$k-1$
		choices for a bad pair contained in $e$ and containing $x$. 
		Therefore, $|\mathcal B|\le \sum_{x\in A}(k-1)|B(x)|$. As shown above, there exists a vertex
		$
		x\in A,
		$
		say
		$
		x\in V_1,
		$ such that	$|B(x)|\ge \frac{c_2}{k-1}n^{k-1}$.
		
		Altogether, we conclude from the above discussion that there exists a vertex $x\in V_1$ belongs to at least  $\frac{c_2}{k-1}n^{k-1}$ bad edges, each containing another vertex from $V_1$.
		Define
		$
		T = N_B(x) \cap V_1
		$
		to be the set of vertices in $V_1$ adjacent to $x$ via a bad pair.	 Then
		$|T|	\ge \frac{c_2}{k-1}n^{k-1}\cdot \binom{n}{k-2}^{-1}
		\ge c_3n.
		$
		For every
		$i\in\{2,\ldots,d\}
		$,
		define
		$
		Y_i
		=
		\{y\in V_i:\{x,y\}\ \text{is dense}\}.
		$
		For every vertex
		$
		y\in Y,
		$
		we can find  an edge
		$
		e_y\in E(G)
		$
		containing both $x$ and $y$.
		Next choose vertex sets $U_1\in Y_1\setminus e_y$ and 
		$
		U_i\in Y_i\setminus e_y
		$.
		Using the density of all pairs $\{x,u\}$ with  $u\in \bigcup_{i=2}^dU_i$, together with the edge $e_y$, we can greedily construct a partial embedding of $L$  whose every pair containing $x$ is already covered. 
		Since $G$ is $L$-free and every pair $\{x,v\}$ with $v\in \{y\}\cup U_1$ is not an edge in $F$, at least one of the pairs in  $\{y\}\cup (\bigcup_{i=2}^dU_i)$ must be sparse.	
		If 	$|Y_j|\ge c_3n$ for all
		$
		j=2,\ldots,d.
		$
		Then the total number of such choices of  $\{y\} \cup (\bigcup_{i=2}^dU_i)$ is at least
		$$c_3n\cdot {{n\over 2d}-k\choose |U_1|} \prod_{i=2}^d{c_3n-k\choose |U_i|}>c_4n^{f-1}.
		$$
		Note that $x$ is fixed, hence we obtain more than	$c_4n^{f-1}/n^{f-3}$ sparse pairs, contradicting the previously established bound.
		Hence, there exists some
		$
		j\in\{2,\ldots,d\}
		$
		for which
		$
		|Y_j|<c_3n.
		$
		By symmetry, we may assume that
		$
		|Y_2|<c_3n
		$, that is all but at most $c_3n$ pairs  $
		\{x,u\}$ with $ u\in V_2
		$
		are dense.
		Let $B_1(x)$ denote 
		the number of edges of $G$ containing $x$ and intersecting $V_2$, then 
		$$|B_1(x)|\le 
		c_3n\binom{n}{k-2}+n\cdot ln^{k-3}
		< c_3n^{k-1}.
		$$
		Let $B_2(x)$ denote 
		the number of bad edges of $B$ 
		contain $x$, another vertex from $V_1$, and no vertex from $V_2$, then
		\[|B_2(x)|\ge |B(x)|-|B_1(x)|\ge
		\frac{c_2}{k-1}n^{k-1}-c_3n^{k-1}>|B_1(x)|
		.\]

		We now consider moving the vertex $x$ from the part $V_1$ to the part $V_2$.
		Recall that the partition
		$
		V(G)=V_1\cup\cdots\cup V_d
		$
		was chosen so that
		$
		h=\sum_{i\in[d]}ih_i
		$
		is maximized.
		After relocating $x$ to $V_2$, some edges
		may contribute less to $h$.
		However, this can happen only for edges
		of $B_1(x)$.
		On the other hand, each  edge of $B_2(x)$ contribute one additional part, and therefore increases the value of $h$ by exactly one.
		Thus the total gain in $h$ is at least
		$|B_2(x)|-|B_1(x)|>0
		$, this contradicts the maximality of the original partition
		$
		V_1\cup\cdots\cup V_d.
		$
	\end{proof}
\section{Exact verification of the weighted inequality}

The proof of Theorem~4.1 reduces the finite check to
graphs $G$ with $\binom{d-1}{2}\le e(G)<\binom d2$ of weight-one edges and a distinguished
missing edge $uv$  assigned weight $t\in[0,1)$.
If $a,b$ are the degrees of $u,v$ in $G$ and
$c_1,\ldots,c_{d-2}$ are the other degrees, the weighted degrees are
\[
a+t,\ b+t,\ c_1,\ldots,c_{d-2}.
\]
The polynomial in (12) depends only on the unordered pair $\{a,b\}$
and the multiset $\{c_1,\ldots,c_{d-2}\}$.
We call these two sorted lists the \emph{degree signature}.
The program enumerates degree signatures directly.

\subsection*{Complete enumeration of the degree signatures}

Let $J=K_d-G$ be the complement of $G$.
Then $uv\in E(J)$ and
$1\le e(J)\le d-1.$
For each possible value $h=e(J)$, the positive degrees of $J$
form an integer partition of $2h$ into at most $d$ parts.
Every part is at most $h$, since every vertex has degree at most \(h\).
The program enumerates all these partitions, adds zero degrees
to obtain $d$ entries, and tests whether the sequence is graphical.
A sequence is \emph{graphical} if it is the degree sequence of
a simple graph.

For each graphical sequence, choose the degrees of the two
endpoints of the distinguished edge.
They must be positive.
A repeated value can be chosen twice only if it occurs at least twice.
It remains to test whether some realization of the sequence
contains an edge between these two specified vertices.

Suppose the chosen endpoint degrees in $J$ are $a,b$, and the
other degrees, in nonincreasing order, are
$c_1,\ldots,c_{d-2}$.
There is  containing $uv$ if and only if
\[
b-1,\quad
c_1-\mathbf1_{\{1\le a-1\}},\quad\ldots,\quad
c_{d-2}-\mathbf1_{\{d-2\le a-1\}}
\]
is graphical and  nonnegative.
Here $\mathbf1_{\{j\le a-1\}}$ is one when $j\le a-1$ and zero otherwise.
We prove this equivalence.
Delete $uv$ from a realization of $J$.
The remaining degree of $u$ is $a-1$, and $u$ is not adjacent to $v$.
Let $S$ consist of the $a-1$ other vertices of largest prescribed degree.
We may choose a realization with $N(u)=S$.
Indeed, choose a realization maximizing $|N(u)\cap S|$.
If $N(u)\ne S$, take $x\in S\setminus N(u)$ and
$y\in N(u)\setminus S$.
Then $\deg_J(x)\ge\deg_J(y)$.
Note that $N(y)\setminus\{x\}$ contains $u$, whereas
$N(x)\setminus\{y\}$ does not.
Since the latter set is at least as large, there is
\[
z\in\bigl(N(x)\setminus\{y\}\bigr)
\setminus\bigl(N(y)\setminus\{x\}\bigr).
\]
The four vertices $u,x,y,z$ are distinct.
Replace the edges $uy,xz$ by $ux,yz$.
This increases $|N(u)\cap S|$, a contradiction.
Deleting $u$ now gives the  sequence above.
Conversely, any  that  sequence can be
extended by adding $u$, joining it to $S$, and restoring $uv$.
This proves the equivalence.

The routine \texttt{graphical} uses the Havel--Hakimi reduction.
It deletes a largest degree $r$ and subtracts one from the next
$r$ largest degrees.
It repeats this operation until all degrees are zero or a
negative or impossible degree is encountered.
The same exchange argument, without an excluded vertex, proves
that each reduction preserves graphicality.
Thus both graphicality tests are exact.

Every admissible pair $(G,uv)$ produces one of the partitions
and one of the endpoint choices above.
Conversely, every choice that passes the graphicality test has a realization containing
the distinguished edge in $J$.
Taking complements gives an admissible pair $(G,uv)$.
Hence the enumeration includes exactly the degree signatures
needed in the concavity reduction.
This proves completeness without enumerating every labelled graph.

The complete R program  can be obtained by contacting the author if needed.

\end{document}